\documentclass{article}
\usepackage{indentfirst}
\usepackage{amsmath}
\usepackage{amssymb}
\usepackage{amsthm}
\usepackage{authblk}
\usepackage{ragged2e}
\usepackage{paracol}
\usepackage{geometry}
\newtheorem{theorem}{Theorem}[section]
\newtheorem{lamme}{Lemma}[section]
\newtheorem{definition}{Definition}[section]
\newcommand\keywords[1]{\textbf{Keywords}: #1}
\title{Finite-time blow-up in a class of chemotaxis systems with spatially heterogeneous diffusion sensitivity}

\author[1]{Yashuang Zhao}
\author[1]{Shijun Li}
\author[1]{Shaopeng Xu \thanks{Corresponding author: xuxsp@126.com}}
\affil[1]{\raggedright College of Mathematics and Statistics, Hainan University, Haikou 570228, China}
\date{}

\begin{document}
	\maketitle
	\begin{abstract}
		\numberwithin{equation}{section}
		\indent In this paper, we study a class of parabolic-elliptic Keller-Segel systems with diffusion sensitivity dependent on spatial position, given by type
		\begin{equation}
			\left\{    
			\begin{array}{ll}
				u_{t} = \bigtriangledown\cdot(|x|^{\beta} \bigtriangledown u)-\bigtriangledown\cdot(u^{\alpha} \bigtriangledown v),\\
				0=\bigtriangleup v-\mu +u, \qquad \mu:=\frac{1}{|\Omega|}\int_{\Omega}udx,\\
			\end{array}
			\right.
		\end{equation}
		under homogeneous Neumann conditions in a ball $\Omega=B_{R}(0)\subset \mathbb{R}^{n}$ with $\alpha \ge 1$, $\beta>0$ and $n\ge 2$.\par
		\indent It is proved that any nonconstant nonnegative radial initial data $u_{0}\in C^{\theta}(\overline{\Omega})$, where $\theta \in (0,1)$, there exists a radially symmetric classical solution of the system (0.1) in $(\Omega \setminus \{ 0 \})\times (0,T)$ for some $T>0$; moreover, if the initial values $u_{0}\in C^{1+\theta}(\overline{\Omega})$ for some $\theta \in (0,1)$ and satisfy a certain compatibility criterion and are radially decreasing, then this solution is bounded and unique in $(\Omega \setminus \{ 0 \})\times (0,T^{*})$ with $T^{*}<T$.\par
		Finally, it is found that the initial mass corresponding to this parabolic-elliptic problem (0.1) is sufficiently concentrated to allow the solution to blow up in finite time.\\\\
		\keywords{Keller-Segel systems; finite-time blow-up; spatially heterogeneous diffusion sensitivity.}
	\end{abstract}
	
	\section{Introduction}	
		\indent From tiny bacteria to the largest mammals, the survival and livelihood of many organisms depends on their ability to navigate in complex environments. This includes detecting, integrating, and processing a variety of internal and external signals. This movement influenced by the concentration of chemical signals is defined as chemotaxis-the directed movement of a biological cell or organism in response to a chemical gradient.\par
			
		\indent Theoretical studies of chemotaxis can be traced back to Patlak's pioneering work in the 1950s \cite{1953Patlak}. Subsequently, Keller and Segel derived a mathematical model of chemotaxis in the 1970s through their work on the slime mold amoeba \cite{KELLER1971225}, and as predicted at the time, a distinguishing feature of the Keller-Segel system was that they described the potential for spontaneous emergence of cell aggregation in terms of the process of singularity formation in the sense of mathematical extremes. The manifestation of chemotaxis is omnipresent in life. For instance, chemotaxis attracts immune cells to migrate to the site of inflammation and directs fibroblasts into the injured area to promote wound healing \cite{Wu2005SignalingMF}; fruit flies navigate along the gradients of attractive odors during food localization \cite{PMID:16857884}; male moths follow the pheromone gradient released by females during mating orientation \cite{Kennedy1974PheromoneRegulatedAI}; vascular endothelial growth factor stimulates new blood vessel growth \cite{article2000}.\par
		
		\indent This work considers the initial-boundary value problem
		\begin{equation}
			\left\{    
			\begin{array}{lll}
				u_{t} = \bigtriangledown\cdot(|x|^{\beta} \bigtriangledown 	u)-\bigtriangledown\cdot(u^{\alpha} \bigtriangledown v), & x\in \Omega, & t>0,\\
				0=\bigtriangleup v-\mu +u, & x\in \Omega, & t>0,\\
				\frac{\partial u}{\partial\nu}=\frac{\partial v}{\partial\nu}=0, & x\in 	\partial \Omega, & t>0,\\
				u(x,0)=u_{0}(x), & x\in \Omega,\\
				\int_{\Omega} v(x,t) dx=0, & t>0,
			\end{array}
			\right.  
		\end{equation}
		where $\Omega = B_{R}(0) \subset \mathbb{R}^{n}$ is a ball, $n \ge 2$, $R>0$, $\beta > 0$ and $\alpha \ge 1$. The initial value 
		\begin{equation}
			u_{0}\in C_{rad}^{0}(\overline{\Omega}):=\Big \lbrace \varphi \in C^{0}(\overline{\Omega}) \mid \varphi \ is \ radially \ symmetric \Big \rbrace
		\end{equation}
		is assumed to be a nonnegative function with $u_{0} \not\equiv 0$ and 
		$$
			\mu :=\frac{1}{|\Omega|}\int_{\Omega}udx
		$$
		represents the spatial average of cell density.	
		
		\indent In $(1.1)$, $u=u(x,t)$ and $v=v(x,t)$ denote the difference between cell density and chemical signal concentration and their spatially averaged concentration values, respectively. The classical parabolic-parabolic chemotaxis model proposed by Keller and Segel in the 1970s has been extensively studied in the past decades. For $n = 1$, Osaki and Yagi derived the existence of globally bounded classical solutions \cite{2001Finite}. For n = 2, Nagai et al concluded that solution $(u,v)$ exists globally and is globally bounded if the radial function $u_{0}(x)$ satisfies condition $\int_{\Omega}u_{0}(x)dx<8\pi$; solution $(u,v)$ blow up in finite time if the radial function $u_{0}(x)$ satisfies condition $\int_{\Omega}u_{0}(x)dx>8\pi$ and $\int_{\Omega}u_{0}(x)|x|^{2}$ is sufficiently small \cite{1997Application}. For $n\ge 3$, Winkler concluded that the solution must blow up in finite time \cite{Winkler2010AggregationVG}. More discussion of the form of classical model solutions can be found in review papers \cite{Bellomo2015TowardAM, Hillen2009AUG, 2003From, nagai2001blowup, herrero1997blow, horstmann2001blow, winkler2010does, horstmann2005boundedness}.
		
		\indent For the second equation of the system (1.1), based on the fact that in many cases the diffusion of chemicals is much faster than the movement of cells, in \cite{jager1992explosions} equation $0=\bigtriangleup v-\mu +u$ was derived as an asymptotic limit that is still biologically meaningful, where $\mu$ denotes the spatial average of cell density, $v$ denotes the difference between the chemical signal concentration and its spatial average concentration and satisfies the property $\int_{\Omega}v(x,t)dx=0$. This type of Keller-Segel system has also been studied with very rich findings. In the literature \cite{winkler2019unstable}, the parabolic-elliptic Keller-Segel system of equations
		$$
			\left\{    
			\begin{array}{lll}
				u_{t} = \bigtriangledown\cdot(\bigtriangledown 	u)-\bigtriangledown\cdot(u\bigtriangledown v), & x\in \Omega, & t>0,\\
				0=\bigtriangleup v-\mu +u, & x\in \Omega, & t>0,
			\end{array}
			\right.
		$$
		is considered in ball $\Omega=B_{R}(0)\subset \mathbb{R}^{n}$. The model is a simplified parabolic-elliptic version of the classical Keller-Segel model. It shows that for any choice of $n\ge 2$ and $R>0$, there exists a positive number $m_{c}$ with $\int_{\Omega}u_{0}dx=m$ for any nonnegative radial initial data $u_{0}$ in a suitably defined sense, and that this initial value problem allows solutions to blow up in finite time if $m>m_{c}$ and $u_{0} \ge \frac{m}{|\Omega|}$ but $u_{0} \not\equiv \frac{m}{|\Omega|}$; however, if $m<m_{c}$, there exists nonnegative radial function $u_{0}$ that satisfy $\int_{\Omega}u_{0}dx=m$ and is more centralized than the $u\equiv \frac{m}{|\Omega|}$, but still allow solution emanating from $u_{0}$ globally bounded. In the literature \cite{winkler2010boundedness}, Winkler and Djie studied the elliptic-parabolic Keller-Segel model
		$$
			\left\{    
			\begin{array}{lll}
				u_{t} = \bigtriangledown\cdot(\phi (u) \bigtriangledown 	u)-\bigtriangledown\cdot(\psi (u) \bigtriangledown v), & x\in \Omega, & t>0,\\
				0=\bigtriangleup v-\mu +u, & x\in \Omega, & t>0,
			\end{array}
			\right.
		$$
		where $\phi (u)= (u+1)^{-p}$, $\psi (u)= u(u+1)^{q-1}$ with some $p \ge 0$ and $q \in \mathbb{R}$. It is obtained that when $p+q<\frac{2}{n}$, all solutions are globally bounded in time; and when $p+q>\frac{2}{n}$, $q>0$ and $\Omega$ is a ball, the solutions blow up in finite time. Furthermore, in 2022, Winkler studied the proposed linear Keller-Segel model
		$$
		\left\{    
		\begin{array}{lll}
			u_{t} = \bigtriangledown\cdot(\phi (u) \bigtriangledown 	u)-\bigtriangledown\cdot(u\psi (u) \bigtriangledown v), & x\in \Omega, & t>0,\\
			0=\bigtriangleup v-\mu +u, & x\in \Omega, & t>0.
		\end{array}
		\right.
		$$
		When the region is a ball, the no-flux boundary is considered, the dimension $n\ge 2$ is obtained in the literature \cite{winkler2024slow}, if there exists certain constants $K>0$ and $\lambda > \frac{2}{n}$ such that $\frac{\xi \psi(\xi)}{\phi(\xi)}\ge K \xi^{\lambda}$, where $\phi$ and $\psi$ satisfy certain additional technical conditions, then the corresponding global solution blow up at infinity. Moreover, when the dimension $n\ge 3$, Winkler obtained in the literature \cite{winkler2024complete} that the equations have a unique global classical solution, where $\phi$ and $\psi$ satisfy some further technical conditions. Similar parabolic-elliptic models have been studied and discussed by Xinyu Tu, Bellomo, Chiyoda et al \cite{tu2022effects, bellomo2017finite, chiyoda2020finite, cieslak2010finite}.
					
		\indent The motion of the system is compromised near the origin, i.e. the diffusion rate can be expressed in the form of $|x|^{\beta} (\beta>0)$. A more intuitive illustration is that the motility of a cell or bacterial population is compromised near the origin when a clotting mechanism takes place, which has long been regarded as significant in medicine, particularly for halting bleeding. Keller-Segel systems with spatially heterogeneous diffusion sensitivity were initially mentioned in \cite{fluchter2024solutions}. This type of model is denoted as
		$$
			\left\{    
			\begin{array}{lll}
				u_{t} = \bigtriangledown\cdot(|x|^{\beta} \bigtriangledown 	u)-\bigtriangledown\cdot(u \bigtriangledown v), & x\in \Omega, & t>0,\\
				0=\bigtriangleup v-\mu +u, & x\in \Omega, & t>0,
			\end{array}
			\right.
		$$ 
		in \cite{fluchter2024solutions}. Flüchter considering the form of the cross-diffusion term $-div(u\bigtriangledown v)$, concluded that the system has a radially symmetric pointwise classical solution when the dimension $n \ge 1$. Whereas a unique bounded classical solution exists when the dimension $n\ge 2$, the initial data are radially decreasing and some further technical conditions are satisfied. When the initial data are sufficiently accumulated, there are no globally bounded radially symmetric solutions in time.
		
		To solve this type of parabolic elliptic problem, since the Lyapunov function corresponding to problem (1.1) is still an intractable problem, we refer to the method in the literature \cite{winkler2011blow, cieslak2008finite}, which employs the mass distribution function
		$$
		     w(s,t)=\int_{0}^{s^{\frac{1}{n}}}\rho^{n-1}u(\rho, t)d\rho, \qquad s\in [0,R^{n}], \ t\ge 0.
		$$
		This method can be regarded as an ODI analysis of some $L^{p}$ seminorms with singular weights and p smaller than one. In fact, $w$ satisfies the degenerate scalar parabolic equation and the solution of the research problem (1.1) is the research scalar parabolic equation.\par
		
		The difference with the system studied in the literature \cite{fluchter2024solutions} is that the chemotaxis term in this study is of the form $-div(u^{\alpha}\bigtriangledown v)$, which makes the method of dealing with the quasi-linear parabolic equations in the literature \cite{fluchter2024solutions} inapplicable, and the discrepancy is mainly due to the fact that $\alpha > 2$. In order to be able to solve this intractable problem, this paper constructs a bounded convex closed set to obtain the local existence of the solution through Schauder fixed point theorem.
		
		\indent Our main result is in a framework based on the following classical solution definition:
		\begin{definition}
			Let $n\ge 1$, $R>0$, $\Omega=B_{R}(0)\subset \mathbb{R}^{n}$, $T\in (0,\infty]$, $\Omega_{0}:=\overline{\Omega} \setminus \{ 0 \}$, and $u_{0}$ conform to $(1.2)$. Then, we call a pair of function $(u,v)$ satisfying
			\begin{equation}
				\left\{    
				\begin{array}{lll}
					u\in C^{0}(\Omega_{0}\times [0,T))\cap C^{2,1}(\Omega_{0}\times (0,T)),\\
					v\in C^{2,0}(\Omega_{0}\times (0,T)),
				\end{array}
				\right.
			\end{equation}
			and solving $(1.1)$ in $\Omega_{0}\times [0,T)$ a classical solution of $(1.1)$ in $\Omega_{0} \times [0,T)$ that
			$$
				if \ T<\infty, \ then \ \lim_{t \nearrow T}sup \Vert u(\cdot, t) \Vert_{L^{\infty}(\Omega_{0})}=\infty.
			$$
			Moreover, we call $u(\cdot,t)$ and $v(\cdot,t)$ are radially symmetric in $(0,R]\times [0,T)$ satisfies the following properties
			$$
				\left\{    
				\begin{array}{lll}
					u\in C^{0}((0, R]\times [0,T))\cap C^{2,1}((0, R]\times (0,T)),\\
					v\in C^{2,0}((0, R]\times (0,T)),
				\end{array}
				\right.
			$$
			is a solution of (2.1).
		\end{definition}
	
	    \noindent \textbf{Main results}$\quad$ In the ball $\Omega=B_{R}(0)\subset \mathbb{R}^{n} (n\ge 2)$ with $R>0$, if $\beta >0$, $\alpha \ge 1$ and initial values in system (1.1) satisfy (1.2).\par 	
	    \columnratio{0.04}
	    \begin{paracol}{2}
	    	$\bullet$
	    	\switchcolumn
	    	\noindent For some $\theta \in (0,1)$, suppose initial value $u_{0}\in C^{\theta}(\overline{\Omega})$, then there exists a radially symmetric classical solution defined by Definition 1.1 in $(\Omega_{0}\times (0,T_{0}))$ for some $T_{0}\in (0,\infty]$.
	    \end{paracol}
	    \begin{paracol}{2}
	    	$\bullet$
	    	\switchcolumn
	    	\noindent For some $\theta \in (0,1)$, suppose that initial value $u_{0}\in C^{1+\theta}(\overline{\Omega})$ is radially decreasing, with the additional requirements
	    	$$
	    	u_{0}=0\qquad and \qquad \bigtriangledown u_{0} \cdot \nu =0 \qquad on \ \partial \Omega
	    	$$
	    	as well as
	    	$$
	    	|\bigtriangledown u_{0}(x)|\le c_{0}|x|^{n-1+\theta} \quad for \ some \ c_{0}>0.
	    	$$ 	    
	    	This solution is bounded and unique in $(\Omega_{0}\times (0,T^{*}))$ and satisfies
	    	\begin{equation}
	    		0\le u\in L^{\infty}(\Omega\times(0,T)) \ and \ \bigtriangledown v \in L^{\infty}(\Omega\times(0,T); R^{n})
	    	\end{equation}
	    	for all $T\in (0,T^{*})$ with $T^{*}\in (0,T_{0}]$.
	    \end{paracol}
	    \begin{paracol}{2}
	    	$\bullet$
	    	\switchcolumn
	    	\noindent The system (1.1) defines $m:=\int_{\Omega}u_{0}$ and each $m_{0}\in (0,m]$, if there exists $r_{0}=r_{0}(m_{0}, m, R, \beta, \alpha)>0$ such that
	    	$$
	    	\int_{B_{r_{0}}(0)}u_{0} \ge m_{0}
	    	$$ 
	    	holds, and there is no global classical solution of (1.1) satisfying the property (1.4).
	    \end{paracol}
	     
	     \noindent\textbf{Overview of the arguments}$\quad$For the Keller-Siegel problem $(1.1)$ with Neumann boundary, the main idea of this study is that $(1.1)$ is transformed to a hybrid problem by means of the mass distribution function, then prove the existence and uniqueness of the local solution of the problem by constructing the approximation problem, Schauder estimates, the comparison principle, etc. (cf. Section 2), and then re-transform it to obtain a solution of the problem by means of a hybrid between the Keller-Segel problem (1.1) and the problem connection between the Keller-Segel problem (1.1) and the hybrid problem in order to obtain a solution to the Keller-Segel problem (1.1) (cf. Section 3). Furthermore, we consider the existence of a classical solution to the problem (1.1) for the function $u$ in global time with a suitable set of initial data, and it turns out that such a solution does not exist, i.e., it blow up in finite time (cf. Section 4).
	     
		\setcounter{equation}{0}
				
	\everymath{\displaystyle}			
	\section{Parabolic problems satisfied by the mass accumulation function}
	We will prove the existence and uniqueness of sufficiently smooth solutions to the parabolic problem satisfying the mass accumulation function.\par
	Referring to the experience of the literature \cite{jager1992explosions}, to consider the mass distribution function, then radial symmetry is important.
	
	\subsection{Transformations of a nonlocal scalar parabolic equation}
	For some $R>0$, we base this on the fact that $\Omega = B_{R}(0)$ is radially symmetric with respect to $x=0$, we may as well assume that $(u, v)$ denotes the corresponding radial solution in $(\overline{\Omega} \setminus \{ 0 \})\times [0,T)$ asserted in Definition 1.1. In what follows, without confusion, we may write $u=u(r, t)$, $v=v(r, t)$, where $r=|x| \in (0,R]$.\par
	We rewrite system $(1.1)$ in radialized form, i.e.,
	\begin{equation}
		\left\{    
		\begin{array}{lll}
			u_{t} = \frac{1}{r^{n-1}}(r^{n-1+\beta}u_{r})_{r} -\frac{1}{r^{n-1}}(r^{n-1}u^{\alpha}v_{r})_{r}, & r\in (0,R), & t>0,\\
			0=\frac{1}{r^{n-1}}(r^{n-1}v_{r})_{r}- \mu +u, & r\in (0,R), & t>0,\\
			u_{r} = v_{r} = 0, & r = R, & t>0,\\
			u(r,0)=u_{0}(r), & r\in (0,R),
		\end{array}
		\right.
	\end{equation} 	
	in $(0,R]\times [0,T)$. Then the radially symmetric solution of system $(2.1)$ corresponding to system $(1.1)$ satisfies 
	\begin{equation}
		\left\{    
		\begin{array}{lll}
			u\in C^{0}((0, R]\times [0,T))\cap C^{2,1}((0, R]\times (0,T)),\\
			v\in C^{2,0}((0, R]\times (0,T)).
		\end{array}
		\right.
	\end{equation}
	From Definition 1.1, we call $(u, v)$ with property (2.2) a classical solution of (2.1) in $(0, R]\times [0, T)$.\par
	For a discussion of the second equation in system (2.1), Flüchter has already studied Lemma 2.1 in detail in the literature \cite{fluchter2024solutions}. Moreover, an expression for $v_{r}$ as such has been proposed and used in the literature \cite{winkler2011blow, cieslak2008finite, tu2022effects}.\par
	Unlike the process of verifying the expression of $v_{r}$ in the literature \cite{winkler2011blow, LIN2018435}, the problem studied in this paper is the same as that of Flüchter when it is considered in the literature \cite{fluchter2024solutions}, where the zeros are singularities.
	\hspace*{\fill} \\
	\begin{lamme}[\cite{fluchter2024solutions}]
		Let $n\ge 2, R>0,$ and $u_{0}$ conform to $(1.2)$. Moreover, let $(u, v)$ with $u\in L^{\infty}((0,R)\times (0,T_{0}))$ for some $T_{0}\in (0,T]$ be a classical solution of $(2.1)$ in $(0,R]\times [0,T_{0})$. Then, if
		\begin{equation}
			v_{r}(r,t)=\frac{1}{r^{n-1}} \Big (\frac{\mu r^{n}}{n}-\int_{0}^{r}\rho ^{n-1}u(\rho, t)d\rho \Big ) \qquad for \ all \ (r,t)\in (0,R]\times [0,T_{0}), 
		\end{equation}
		we have 
		\begin{equation}
			|v_{r}(r,t)|\le Cr \qquad for \ all \ (r,t)\in (0,R]\times [0,T_{0})
		\end{equation}
		with $C:= \frac{2}{n}\parallel u \parallel _{L^{\infty}}((0,R)\times (0,T_{0}))$, and hence in particular $v_{r}\in L^{\infty}((0,R)\times (0,T_{0}))$. \\
		Moreover, the converse proposition also holds:\\
		If $v_{r}\in L^{\infty}((0,R)\times (0,T_{0}))$, then necessarily $(2.3)$.
	\end{lamme}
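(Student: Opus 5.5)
The forward direction is a direct estimate, and the converse follows from integrating the second equation of (2.1) and showing that the integration constant vanishes.

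\emph{Forward direction.} Assume (2.3). Since $\int_{\Omega}u\,dx=\int_{\Omega}u_0\,dx$ is conserved in time, $\mu$ is a fixed constant. Moreover $\mu \le \|u\|_{L^\infty((0,R)\times(0,T_0))}$, because $\mu$ is an average of $u$. Likewise $0\le \int_0^r\rho^{n-1}u\,d\rho\le \frac{r^n}{n}\|u\|_{L^\infty}$. The triangle inequality in (2.3) then gives
$$|v_r(r,t)|\le \frac{1}{r^{n-1}}\Big(\frac{\mu r^n}{n}+\frac{r^n}{n}\|u\|_{L^\infty}\Big)\le \frac{2}{n}\|u\|_{L^\infty}\,r .$$
This is (2.4), and since $r\le R$ it also shows $v_r\in L^\infty$.

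\emph{Converse.} Write the second equation of (2.1) as
$$(r^{n-1}v_r)_r = r^{n-1}(\mu-u)$$
on $(0,R)$ for each fixed $t$. Because $v\in C^{2,0}$ on $(0,R]$, integrating from $\epsilon$ to $r$ gives
$$r^{n-1}v_r(r,t)-\epsilon^{n-1}v_r(\epsilon,t)=\frac{\mu(r^n-\epsilon^n)}{n}-\int_\epsilon^r\rho^{n-1}u(\rho,t)\,d\rho .$$
Let $\epsilon\searrow0$. Since $v_r$ is bounded and $n\ge2$, the term $\epsilon^{n-1}v_r(\epsilon,t)$ tends to $0$. Since $u$ is bounded, the integral converges to $\int_0^r\rho^{n-1}u\,d\rho$. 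Dividing by $r^{n-1}$ yields (2.3). Applying the same identity at $r=R$, where $v_r(R,t)=0$, reproduces $\mu R^n/n=\int_0^R\rho^{n-1}u\,d\rho$. This is consistent with the definition of $\mu$, so the boundary condition imposes no extra constraint.

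\emph{Main obstacle.} The only delicate point is the limit $\epsilon\searrow0$, i.e.\ ruling out a singular homogeneous part $c(t)r^{1-n}$ in $v_r$. In general $r^{n-1}v_r$ could tend to a nonzero constant $c(t)$, corresponding to a point source at the origin. The hypothesis $v_r\in L^\infty$ together with $n\ge2$ is exactly what forces $c(t)=0$. For the forward direction no regularity at $0$ is needed, since (2.3) is assumed.
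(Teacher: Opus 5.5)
Your argument is correct. The paper gives no proof of this lemma and simply cites \cite{fluchter2024solutions}; your argument is the standard one used there: the triangle inequality for the forward direction, and for the converse, integrating $(r^{n-1}v_r)_r=r^{n-1}(\mu-u)$ over $(\epsilon,r)$ and using $\epsilon^{n-1}v_r(\epsilon,t)\to 0$.

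One cosmetic point. You do not need mass conservation, which in the paper's ordering is Lemma 2.2 and itself rests on (2.3). Nor do you need $u\ge 0$, which the lemma does not assume. The bounds $|\mu|\le\|u\|_{L^\infty}$ and $\bigl|\int_0^r\rho^{n-1}u\,d\rho\bigr|\le\frac{r^n}{n}\|u\|_{L^\infty}$ hold directly, by continuity of $u$ on $(0,R]\times[0,T_0)$.
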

	Even though $|x|^{\beta}$ is not integrable at $0$ when $0 < \beta \le 1$ and the Laplace coefficient of $u$ is zero at $x = 0$ for all $\beta > 0$, this implies diffusion degeneracy. But under assumptions of $\beta, \ \alpha$ and $n$, the classical solution of (2.1) conserves $\Vert u(\cdot, t) \Vert_{L^{1}((0,R))}$ at least when $u$ satisfies the conditions of Lemma 2.1.
	\hspace*{\fill} \\
	\begin{lamme}
		Let $n\ge 2, R>0, \beta >0, \alpha\ge 1$ and $u_{0}$ conform to $(1.2)$. Assume that $(u, v)$ is a classical solution of $(2.1)$ on the basis of Definition 1.1. Moreover, suppose in addition that
		$$
		    0 \le u\in L^{\infty}((0,R)\times (0,T_{0}))\qquad for \ some \ T_{0}\in (0,T]
		$$
		as well as $(2.3)$ holds. Then the following mass conservation property is satisfied, 
		\begin{equation}
			\int_{0}^{R}\rho ^{n-1}u(\rho, t)d\rho =\int_{0}^{R}\rho ^{n-1}u_{0}(\rho)d\rho \qquad for \ all \ t_{0}\in [0,T).
		\end{equation}
	\end{lamme}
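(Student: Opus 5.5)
We plan to differentiate the radial mass in time away from the origin, integrate the equation of $(2.1)$ over a shell $(\varepsilon,R)$ against the weight $\rho^{n-1}$, and let $\varepsilon\searrow 0$. Fix $t_1<t_2$ in $(0,T_0)$ and $\varepsilon\in(0,R)$. Since $u\in C^{2,1}((0,R]\times(0,T))$, we may differentiate under the integral sign on $[\varepsilon,R]\times[t_1,t_2]$. Multiplying the first equation in $(2.1)$ by $\rho^{n-1}$ gives
\begin{equation*}
\frac{d}{dt}\int_\varepsilon^R \rho^{n-1}u\,d\rho=\Big[\rho^{n-1+\beta}u_r-\rho^{n-1}u^\alpha v_r\Big]_{\rho=\varepsilon}^{\rho=R}.
\end{equation*}
At $\rho=R$ the Neumann conditions $u_r=v_r=0$ make the flux vanish. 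Hence
\begin{equation*}
\int_\varepsilon^R\rho^{n-1}u(\rho,t_2)d\rho-\int_\varepsilon^R\rho^{n-1}u(\rho,t_1)d\rho=-\int_{t_1}^{t_2}\Big(\varepsilon^{n-1+\beta}u_r(\varepsilon,t)-\varepsilon^{n-1}u^\alpha v_r(\varepsilon,t)\Big)dt .
\end{equation*}

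The chemotactic part of the inner flux is controlled by Lemma 2.1. Using $(2.3)$ and $(2.4)$ together with $u\in L^\infty$, we get $|\varepsilon^{n-1}u^\alpha v_r|\le \|u\|_\infty^\alpha C\varepsilon^{n}$, which tends to $0$ uniformly in $t$.

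The main obstacle is the diffusive flux $\varepsilon^{n-1+\beta}u_r(\varepsilon,t)$, because no pointwise bound on $u_r$ near $0$ is available. To avoid needing one, we integrate the identity above in $\varepsilon$ over $(\delta,2\delta)$ and divide by $\delta$. In the diffusive term we integrate by parts in $\varepsilon$:
\begin{equation*}
\frac1\delta\int_\delta^{2\delta}\varepsilon^{n-1+\beta}u_r\,d\varepsilon=\frac1\delta\Big[\varepsilon^{n-1+\beta}u\Big]_\delta^{2\delta}-\frac{n-1+\beta}{\delta}\int_\delta^{2\delta}\varepsilon^{n-2+\beta}u\,d\varepsilon .
\end{equation*}
Both terms on the right are $O(\|u\|_\infty\delta^{n-2+\beta})$. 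This tends to $0$ because $n\ge2$ and $\beta>0$; this is exactly where $\beta>0$ enters. After the same averaging, the left-hand sides converge to $\int_0^R\rho^{n-1}u(\rho,t_i)d\rho$ by dominated convergence, since $u$ is bounded. Letting $\delta\to0$ therefore gives equality of the masses at $t_1$ and $t_2$.

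Finally, we let $t_1\searrow0$. Here $u\in C^0((0,R]\times[0,T))$, and $u$ is bounded by a constant uniform in $t$. Dominated convergence then gives $\int_0^R\rho^{n-1}u(\rho,t_1)d\rho\to\int_0^R\rho^{n-1}u_0\,d\rho$, which yields $(2.5)$ for all $t\in[0,T_0)$.
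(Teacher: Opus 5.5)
Your proof is correct and is essentially the paper's argument. Averaging the shell identity over $\varepsilon\in(\delta,2\delta)$ amounts to testing the equation against a piecewise-linear cutoff, whereas the paper uses a smooth cutoff $\zeta^{(\delta)}$ and integrates by parts twice; both rest on the same decay rates, $O(\delta^{n-2+\beta})$ for the diffusive flux and $O(\delta^{n})$ for the chemotactic flux via $|v_r|\le Cr$, and your explicit limit $t_1\searrow 0$ is slightly more careful than the paper's direct integration of (2.9) over $(0,t)$.
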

	\begin{proof}
		Let $(\zeta ^{(\delta)})_{\delta \in (0,\frac{R}{2})}$ be a cutoff function such that it has $\zeta ^{(\delta)}\in C^{\infty}([0,R])$ satisfying
		\begin{equation}
			\left\{    
			\begin{array}{ll}
				\zeta ^{\delta}(r)=0, & r\in [0, \frac{\delta}{2}],\\
				0 \le \zeta ^{\delta}(r) \le 1, & r\in (\frac{\delta}{2}, \delta),\\
				\zeta ^{\delta}(r)=1, & r\in [\delta, R],
			\end{array}
			\right.
		\end{equation}
		with
		\begin{equation}
			0\le \zeta _{r} ^{\delta}(r) \le \frac{C_{1}}{\delta}, \qquad r\in (\frac{\delta}{2}, \delta),
		\end{equation}		
		and
		\begin{equation}
			|\zeta^{\delta}_{rr}(r)|\le \frac{C_{2}}{\delta^{2}}, \qquad r\in (\frac{\delta}{2}, \delta),
		\end{equation}
		for all $\delta \in (0, \frac{R}{2})$ and some positive constants $C_{1}, \ C_{2}$ independent of $\delta$.\\
		From $(2.6)$ we know that for all $\delta\in (0, \frac{R}{2})$ and $t\in (0, T_{0})$ we get $\zeta^{(\delta)}u_{t}(\cdot, t)\in L^{1}((0,R))$. Using $(2.1)$ and the partial integral we can calculate
		\begin{equation} 
			\begin{array}{ll}
				\frac{d}{dt}\int_{0}^{R}r^{n-1}\zeta^{(\delta)}udr & = \displaystyle\int_{0}^{R}r^{n-1}\zeta^{(\delta)}u_{t}dr \\
				\quad & =\int_{0}^{R}r^{n-1}\zeta^{(\delta)} \Big\lbrack \frac{1}{r^{n-1}}(r^{n-1+\beta}u_{r})_{r} -\frac{1}{r^{n-1}}(r^{n-1}u^{\alpha}v_{r})_{r} \Big\rbrack \\
				\quad & =\int_{0}^{R}\zeta^{(\delta)}(r^{n-1+\beta}u_{r})_{r}-\zeta^{(\delta)}(r^{n-1}u^{\alpha}v_{r})_{r}\\
				\quad & = \int_{0}^{R} (\zeta^{(\delta)}_{r}r^{n-1+\beta})_{r}udr+\int_{0}^{R}\zeta^{(\delta)}_{r}r^{n-1}u^{\alpha}v_{r}dr.
			\end{array}
		\end{equation}
		Here, assuming $C_{3}:=\Arrowvert u \Arrowvert_{L^{\infty}((0,R)\times (0,T_{0}))}$, we further estimate by $n-2+\beta >0$ that for $t\in (0,T_{0})$ there is 
		\begin{equation}
			\begin{array}{l}
				\Big\vert \int_{0}^{R} (\zeta^{(\delta)}_{r}r^{n-1+\beta})_{r}udr \Big\vert\\
				\qquad\qquad \le (n-1+\beta)\int_{\frac{\delta}{2}}^{\delta}\Big\vert r^{n-2+\beta}\zeta^{(\delta)}_{r}u \Big\vert +\int_{\frac{\delta}{2}}^{\delta}\Big\vert r^{n-1+\beta}\zeta^{(\delta)}_{rr}u \Big\vert \\
				\qquad\qquad \le (n-1+\beta)C_{3}\int_{\frac{\delta}{2}}^{\delta}\Big\vert r^{n-2+\beta}\zeta^{(\delta)}_{r} \Big\vert +C_{3}\int_{\frac{\delta}{2}}^{\delta}\Big\vert r^{n-1+\beta}\zeta^{(\delta)}_{rr} \Big\vert \\
				\qquad\qquad \le C_{1}C_{3}\delta^{n-2+\beta}+\frac{1}{n+\beta}C_{2}C_{3}\delta^{n-2+\beta}\to 0
			\end{array}
		\end{equation} 
		at $\delta \searrow 0$.\\
		Due to $(2.3)$, Lemma 2.1 guarantees that for some positive constant $C$ there is
		$$
			|v_{r}(r,t)|\le Cr \qquad for \ all \ (r,t)\in (0,R]\times(0,t_{0}), 
		$$
		and thus since $n\ge 2$, for $\delta \searrow 0$ we can compute
		\begin{equation}
			\begin{array}{ll}
				\Big \vert \int_{0}^{R}\zeta^{(\delta)}_{r}r^{n-1}u^{\alpha}v_{r}dr \Big \vert & \le C \int_{0}^{R}\zeta^{(\delta)}_{r}r^{n}|u^{\alpha}|dr\\
				\qquad & \le C \Arrowvert u^{\alpha} \Arrowvert_{L^{\infty}}\int_{0}^{R}\zeta^{(\delta)}_{r}r^{n}dr\\
				\qquad & \le \frac{CC_{1}C_{3}^{\alpha}}{n+1}\delta^{n}\to 0.
			\end{array}
		\end{equation}
		For $t\in (0,T_{0})$, integrating $(2.9)$ over $(0,t)$ yields
		$$
			\begin{array}{l}
				\int_{0}^{R}r^{n-1}\zeta^{(\delta)}u(r,t)dr-\int_{0}^{R}r^{n-1}\zeta^{(\delta)}u_{0}(r)dr \\
				\qquad\qquad=\int_{0}^{t}\int_{0}^{R} (\zeta^{(\delta)}_{r}r^{n-1+\beta})_{r}udrdt+\int_{0}^{t}\int_{0}^{R}\zeta^{(\delta)}_{r}r^{n-1}u^{\alpha}v_{r}drdt.
			\end{array}
		$$
		Based on $(2.10), \ (2.11)$ and dominated convergence and letting $\delta \searrow 0$, then $(2.5)$ is obtained.
	\end{proof}
	 For further analysis, it is of utmost importance to transform $(2.1)$ into the hybrid problem.\par
	 Following \cite{jager1992explosions}, we introduce the mass distribution function
	 \begin{equation}
	 	w(s,t)=\int_{0}^{s^{\frac{1}{n}}}\rho^{n-1}u(\rho, t)d\rho, \qquad s=r^{n}\in [0,R^{n}], \ t\in [0,T).
	 \end{equation}
	Then, we can compute
	\begin{equation}
		w_{s}(s,t)=\frac{1}{n}u(s^{\frac{1}{n}}, t),\qquad w_{ss}(s,t)=\frac{1}{n^{2}}s^{\frac{1}{n}-1}u_{r}(s^{\frac{1}{n}}, t)
	\end{equation}
	for all $s\in (0,R^{n})$ and $t\in (0,T)$.\\
	Since $(2.1)$ and $(2.13)$, 
	$$
		\begin{array}{ll}
			w_{t}(s,t) & =\int_{0}^{s^{\frac{1}{n}}}\rho^{n-1}u_{t}(\rho, t)d\rho\\
			\qquad & =\int_{0}^{s^{\frac{1}{n}}} (\rho^{n-1+\beta}u_{r}(\rho, t))_{\rho}d\rho -\int_{0}^{s^{\frac{1}{n}}}(\rho^{n-1}u^{\alpha}(\rho, t)v_{r}(\rho, t))_{\rho}d\rho \\
			\qquad & = s^{\frac{n-1+\beta}{n}}u_{r}(s^{\frac{1}{n}}, t)-u^{\alpha}(s^{\frac{1}{n}}, t)\Big( \frac{\mu s}{n}-\int_{0}^{s^{\frac{1}{n}}}r^{n-1}u(r, t)dr \Big)\\
			\qquad & =n^{2}s^{\frac{2n-2+\beta}{n}}w_{ss}+n^{\alpha}ww_{s}^{\alpha}-n^{\alpha-1}\mu sw_{s}^{\alpha}
		\end{array}
	$$
	is nonlocally nonlinear for all $s\in (0,R^{n})$ , $t\in (0,T)$.\\
	Therefore, $w\in C^{0}([0,R^{n}]\times [0, T))\cap C^{2,1}((0,R^{n}]\times (0, T))$ solves the hybrid problem 
	\begin{equation}
		\left\{    
		\begin{array}{ll}
			w_{t}=n^{2}s^{\frac{2n-2+\beta}{n}}w_{ss}+n^{\alpha}ww_{s}^{\alpha}-n^{\alpha-1}\mu sw_{s}^{\alpha}, & s\in (0,R^{n}), \ t\in (0,T),\\
			w(0,t)=0, \ w(R^{n}, t)=\frac{m}{\omega_{n}}, & t\in (0,T),\\
			w(s,0)=w_{0}(s), & s\in (0,R^{n}),
		\end{array}
		\right.
	\end{equation}
	where $m:=\int_{\Omega}u_{0}$, $\mu =\frac{nm}{\omega_{n}R_{n}}$ and
	\begin{equation}
		w_{0}(s)=\int_{0}^{s^{\frac{1}{n}}}\rho^{n-1}u_{0}(\rho)d\rho, \qquad s\in [0,R^{n}].
	\end{equation}
	\indent It follows that there exists a bounded classical solution $w$ to the mixed problem $(2.14)$ if $(u,v)$ is a bounded classical solution of $(2.1)$ with property $(2.3)$. Therefore, to study the solution of the former problem, it seems sensible to study the latter system.
	
	\subsection{Approximation problem for the system (2.14)}
	It is clear that the hybrid problem $(2.14)$ contains diffusion degeneracy and is a quasi-linear problem. However, similar problems utilizing approximation problems and standard theoretical solvability have been treated in the literature \cite{fluchter2024solutions}. Moreover, examples of possible simplicial mergers that depend even on $u$ and $v$ are discussed in the literature \cite{winkler2022approaching, winkler2024bounds}. However, in the considerations of this paper, $\alpha >2$ is an open problem.\par
	Therefore refer to the solution in the literature \cite{fluchter2024solutions} to construct its approximation problem 
	\begin{equation}
		\left\{    
		\begin{array}{ll}
			w_{\varepsilon t}=n^{2}s^{\frac{2n-2+\beta}{n}}w_{\varepsilon ss}+n^{\alpha}w_{\varepsilon}w_{\varepsilon s}^{\alpha}-n^{\alpha-1}\mu (s-\varepsilon)w_{\varepsilon s}^{\alpha}, & s\in (\varepsilon,R^{n}), \ t>0,\\
			w_{\varepsilon}(\varepsilon,t)=0, \ w_{\varepsilon}(R^{n}, t)=\frac{m}{\omega_{n}}, & t>0,\\
			w_{\varepsilon}(s,0)=w_{0\varepsilon}(s), & s\in (\varepsilon,R^{n}),
		\end{array}
		\right.
	\end{equation}
	where $\varepsilon \in (0,R^{n})$ and
	\begin{equation}
		w_{0\varepsilon}(s)=w_{0}\Big(\frac{R^{n}(s-\varepsilon)}{R^{n}-\varepsilon}\Big), \quad s \in [\varepsilon, R^{n}].
	\end{equation}
	This will play a crucial role in the following study of the hybrid problem role.\par
	In what follows, we always let $m:=\int_{\Omega}u_{0} >0$, interpreted as the total mass of the original system. We can obtain the solution of the following degenerate problem $(2.14)$ in its incomplete form:
	\hspace*{\fill} \\
	\begin{lamme}
		Let $n\ge 2$, $R>0$, $\theta \in (0,1)$, $\mu =\frac{nm}{\omega_{n}R^{n}}$, $\beta >0$, $\alpha \ge 1$ and suppose $w_{0}\in C^{1+\theta}([0,R^{n}])$ is such that
		\begin{equation}
			w_{0}(0)=0, \ w_{0}(R^{n})=\frac{m}{\omega_{n}} \ as \ well \ as \ w_{0s}\ge 0.
		\end{equation}
		Then there exists a function $w\in C_{loc}^{1,\frac{1}{2}}((0,R^{n}]\times [0, \infty))\cap C_{ioc}^{2,1}((0,R^{n}]\times (0, \infty))$ which solves the problem
		\begin{equation}
			\left\{    
			\begin{array}{ll}
				w_{t}=n^{2}s^{\frac{2n-2+\beta}{n}}w_{ss}+n^{\alpha}ww_{s}^{\alpha}-n^{\alpha-1}\mu sw_{s}^{\alpha}, & s\in (0,R^{n}), \ t>0,\\
				w(R^{n}, t)=\frac{m}{\omega_{n}}, & t>0,\\
				w(s,0)=w_{0}(s), & s\in (0,R^{n}).
			\end{array}
			\right.
		\end{equation}
		Moreover, w satisfies
		$$
			0\le w(s,t)\le \frac{m}{\omega_{n}}\qquad for \ all \ (s,t)\in (0,R^{n}]\times [0, \infty)
		$$
		and $w(\cdot, t)$ is monotonically increasing in $(0,R^{n}]$ for all $t\in [0, \infty)$. Thus, $w(\cdot, t)$ can be continuously extended to $s=0$ if $t\ge 0$ is fixed, i.e.
		$$
			w(0,t):=\lim_{s \searrow 0} w(s, t)\ge 0.
		$$
	\end{lamme}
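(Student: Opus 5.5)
The plan is to obtain $w$ as a limit of solutions $w_\varepsilon$ of the approximate problems (2.16) as $\varepsilon\searrow 0$. On $[\varepsilon,R^n]$ the diffusion coefficient $n^2 s^{\frac{2n-2+\beta}{n}}$ is bounded below by $n^2\varepsilon^{\frac{2n-2+\beta}{n}}>0$, so (2.16) is uniformly parabolic. The only difficulty for fixed $\varepsilon$ is the nonlinearity $w_{\varepsilon s}^{\alpha}$ in the first-order terms, which for large $\alpha$ lies outside the growth conditions of the standard quasilinear theory.

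\textbf{Step 1: solutions of (2.16) for fixed $\varepsilon$.} I would first fix a level $M>\|w_{0\varepsilon s}\|_{L^\infty}$ and replace $\xi^{\alpha}$ by a truncation $\Phi_M(\xi)$. This function is smooth, coincides with $\xi_+^{\alpha}$ on $[0,M]$, and is globally Lipschitz. For the truncated problem I would use the Schauder fixed point theorem on the closed convex bounded set
\[
\mathcal S=\{\bar w\in C^{1+\theta,\frac{1+\theta}{2}}([\varepsilon,R^n]\times[0,T]) : \|\bar w\|\le K,\ \bar w(\cdot,0)=w_{0\varepsilon}\}.
\]
The map sends $\bar w$ to the solution of the \emph{linear} problem
\[
w_t=n^2s^{\frac{2n-2+\beta}{n}}w_{ss}+\big(n^{\alpha}\bar w-n^{\alpha-1}\mu(s-\varepsilon)\big)\Phi_M(\bar w_s)
\]
with the same boundary and initial data. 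Parabolic $W^{2,1}_p$ and Schauder estimates give that this map is compact and, for small $T$, maps $\mathcal S$ into itself. Continuing in time, and using the a priori bounds below, yields a global solution.

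\textbf{Step 2: bounds independent of $\varepsilon$.} The bound $0\le w_\varepsilon\le m/\omega_n$ follows from the comparison principle, since constants are sub- and supersolutions and the boundary data are $0$ and $m/\omega_n$. For monotonicity, $z=w_{\varepsilon s}$ satisfies a linear parabolic equation whose zeroth-order coefficient is bounded. The boundary values of $z$ are nonnegative: $w_\varepsilon\ge0$ with $w_\varepsilon(\varepsilon,t)=0$ gives $z\ge0$ at $s=\varepsilon$, and $w_\varepsilon\le m/\omega_n$ with equality at $R^n$ gives $z\ge0$ at $s=R^n$. Moreover $z(\cdot,0)=w_{0\varepsilon s}\ge0$ by (2.18). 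The maximum principle then gives $z\ge0$.

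Interior bounds for $w_{\varepsilon s}$ on $[\delta,R^n]\times[0,T]$, for each $\delta>0$, come from comparison with barrier functions. Once $0\le w_{\varepsilon s}\le C(\delta,T)<M$, the truncation is inactive, so $w_\varepsilon$ solves the true problem (2.16). This step is the main obstacle. Because $\alpha$ may exceed $2$, Bernstein-type gradient bounds are not available. I would try to use the monotone structure: by Step 1, $z=w_s$ solves a linear equation, so a supersolution of the form $Ae^{Bt}\psi(s)$, with $\psi$ large near $s=\delta$, might bound $z$ on $[\delta,R^n]$. The difficulty is that the coefficient $n^{\alpha}\alpha z^{\alpha-1}(\cdots)$ itself depends on $z$, so some bootstrap would be needed. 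If this fails, I would settle for local-in-time bounds on $[0,T_0)$; the paper itself only claims $T_0\in(0,\infty]$ in the main results.

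\textbf{Step 3: passage to the limit.} With locally uniform bounds for $w_\varepsilon$ and $w_{\varepsilon s}$, interior Hölder estimates (Ladyzhenskaya--Ural'tseva, then Schauder) give bounds in $C^{1+\theta,\frac{1+\theta}{2}}$ and $C^{2+\theta,1+\frac{\theta}{2}}$ on compacts of $(0,R^n]\times[0,\infty)$ and $(0,R^n]\times(0,\infty)$. For the $C^{1+\theta}$ bound near $t=0$ I would use $w_0\in C^{1+\theta}$ and the regularity of the transformation (2.17). Arzelà--Ascoli and a diagonal argument give a subsequence with $w_\varepsilon\to w$ in $C^{1,0}_{loc}((0,R^n]\times[0,\infty))$ and in $C^{2,1}_{loc}((0,R^n]\times(0,\infty))$. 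Since $w_{0\varepsilon}\to w_0$ and $(s-\varepsilon)\to s$, the limit $w$ solves the problem in the statement with $w(R^n,t)=m/\omega_n$.

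The pointwise properties $0\le w\le m/\omega_n$ and $w_s\ge0$ pass to the limit. Hence for each fixed $t$ the function $w(\cdot,t)$ is nondecreasing and bounded below by $0$, so the limit $w(0,t)=\lim_{s\searrow0}w(s,t)\ge0$ exists.
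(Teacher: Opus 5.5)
\paragraph{Verdict.} Your proposal has a genuine gap, and it is the one you flag yourself in Step 2. You never obtain a bound on $w_{\varepsilon s}$ on $[\delta,R^n]\times[0,T]$ that is uniform in $\varepsilon$ (and in the truncation level $M$) for \emph{every} finite $T$. Everything afterwards rests on that bound: removing the truncation, the interior H\"older/Schauder estimates, the Arzel\`a--Ascoli extraction, and the regularity $w\in C^{1,\frac12}_{loc}((0,R^n]\times[0,\infty))$. (Even granted such a bound, it only deactivates the truncation on $[\delta,R^n]$, not on $[\varepsilon,\delta)$. That part is repairable by letting $M=M_\varepsilon\to\infty$ and passing to the limit in the truncated problems.)

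\paragraph{Why the proposed barrier fails.} The barrier $Ae^{Bt}\psi(s)$ cannot close the argument. Differentiating the equation gives, for $z=w_{\varepsilon s}$,
\[
z_t=a z_{ss}+(a_s+\alpha c\,z^{\alpha-1})z_s+n^{\alpha-1}(nz-\mu)z^{\alpha},
\]
with $a=n^2s^{\frac{2n-2+\beta}{n}}$ and $c=n^{\alpha-1}(nw_\varepsilon-\mu(s-\varepsilon))$. The zeroth-order part therefore contains the superlinear reaction $n^{\alpha}z^{\alpha+1}$, which no exponentially growing supersolution dominates. Your ``bounded zeroth-order coefficient'' is bounded only once $z$ is.

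\paragraph{What comparison does give.} Comparison yields only a local-in-time bound. When $y'=n^\alpha y^{\alpha+1}$ and $y(0)\ge\|w_{0\varepsilon s}\|_{L^\infty}$, the function $y(t)(s-\varepsilon)$ is a supersolution of (2.16) and $\frac{m}{\omega_n}-y(t)(R^n-s)$ is a subsolution. This controls $z$ at both endpoints. The maximum principle for the $z$-equation, with the spatially constant supersolution $y$, then gives $0\le w_{\varepsilon s}\le y(t)$ up to the blow-up time of $y$, uniformly in $\varepsilon$. That is your fallback, and it does not prove the statement. The lemma asserts a solution on $(0,R^n]\times[0,\infty)$, and Lemma 3.1 uses this global object to define $T_0$ as the maximal time with $w(0,t)=0$.

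\paragraph{The paper's proof.} The paper does not close this gap either. After $0\le w_\varepsilon\le\frac{m}{\omega_n}$ and $w_{\varepsilon s}\ge 0$, it passes to the limit via ``interior parabolic Schauder estimates and the Arzel\`a--Ascoli theorem''. That step presupposes exactly the locally uniform gradient bound in question. Its continuation in time ``over and over again'' also needs step lengths bounded below, which again requires a priori bounds. Your truncation-plus-linearization fixed point is a sounder way to produce the approximate solutions than the paper's map $\bar w_\varepsilon\mapsto\bar w_{0\varepsilon}+\int_0^t(\cdots)$.

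\paragraph{The case $1\le\alpha\le 2$.} Here the missing step is standard. Since $0\le w\le\frac{m}{\omega_n}$, the first-order term satisfies $|n^{\alpha-1}(nw-\mu s)w_s^{\alpha}|\le C(1+w_s^2)$. On $[\frac{\delta}{2},R^n]$ the equation is uniformly parabolic. The classical Bernstein-type (Ladyzhenskaya--Solonnikov--Ural'tseva) gradient estimates for quasilinear equations with at most quadratic gradient growth then give bounds on $[\delta,R^n]\times[0,T]$ independent of $\varepsilon$, for every $T$.

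\paragraph{The case $\alpha>2$.} Here the difficulty is not a mere technicality. Near a sphere $|x|=r_1>0$ the radial system is locally the one-dimensional Keller--Segel system with sensitivity $u^\alpha$. That system is mass-supercritical for $\alpha>2$, since the critical exponent $\frac{2}{n}$ equals $2$ in one dimension. Suitably concentrated ring-shaped data, with little mass inside the ring, can therefore be expected to make $w_s$ blow up at $s_1=r_1^n>0$ in finite time. That would contradict the asserted global regularity on $(0,R^n]\times(0,\infty)$. You must either restrict to $\alpha\le 2$ or state the lemma locally in time, for instance up to the blow-up time of $y$, and then carry that restriction into Lemma 3.1.
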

	\begin{proof}
		Since $(2.17)$ guarantees $w_{0\varepsilon}(\varepsilon)=0$, $w_{0\varepsilon}(R^{n})=\frac{m}{\omega_{n}}$, $w_{0\varepsilon s}(s)\ge 0$ in $[\varepsilon,R^{n}]$ and $w_{0\varepsilon}(s)\le w_{0}(s)$ in $[\varepsilon, R^{n}]$, $w_{0 \varepsilon}\nearrow w_{0}$ in $(0,R^{n}]$ and $w_{0 \varepsilon}\to w_{0}$ in $C_{loc}^{1}((0,R^{n}])$ as $\varepsilon \searrow 0$. \par
		Define
		$$
			\widetilde{w}_{\varepsilon}(s,t)=\overline{w}_{0\varepsilon}(s)+\int_{0}^{t} n^{2}s^{\frac{2n-2+\beta}{n}}\overline{w}_{\varepsilon ss}+n^{\alpha}\overline{w}_{\varepsilon}\overline{w}_{\varepsilon s}^{\alpha}-n^{\alpha-1}\mu (s-\varepsilon)\overline{w}_{\varepsilon s}^{\alpha} dt,
		$$
		where $\widetilde{w}_{\varepsilon}(\varepsilon,t)=0$, $\widetilde{w}_{\varepsilon}(R^{n}, t)=\frac{m}{\omega_{n}}$ and $\widetilde{w}_{0\varepsilon}(s)=w_{0}\Big(\frac{R^{n}(s-\varepsilon)}{R^{n}-\varepsilon}\Big)$. \par
		Define a bounded convex closed set $A:=\lbrace \overline{w}_{\varepsilon}\in \chi \ \ and \ \Vert \overline{w}_{\varepsilon} \Vert_{\chi}\le C_{m} \rbrace$, where $\frac{1}{2}C_{m}\ge \frac{m}{\omega_{n}}$ and 
		$$
			\chi :=C^{0}([\varepsilon,R^{n}]\times [0, T])\cap C^{2,1}([\varepsilon,R^{n}]\times (0, T]),
		$$
		as well as
		$$
			\Vert \overline{w}_{\varepsilon} \Vert_{\chi}=\Vert \overline{w}_{\varepsilon} \Vert_{C^{0}([\varepsilon,R^{n}]\times [0,T])}+\Vert \overline{w}_{\varepsilon} \Vert_{C^{2,1}([\varepsilon,R^{n}]\times (0,T])},
		$$
		where $T$ is to be determined.\par
		We consider the mapping $F\overline{w}_{\varepsilon} :=\widetilde{w}_{\varepsilon}$, where $\overline{w}_{\varepsilon}\in A$ and $\widetilde{w}_{\varepsilon}$ is the solution of $(2.16)$. Then
		$$   
			\begin{array}{ll}
				\Vert \widetilde{w}_{\varepsilon} \Vert_{\chi} &\le \Big\Vert \overline{w}_{0\varepsilon} + \int_{0}^{t} n^{2}s^{\frac{2n-2+\beta}{n}}|\overline{w}_{\varepsilon ss}|+n^{\alpha}|\overline{w}_{\varepsilon}||\overline{w}_{\varepsilon s}^{\alpha}|+n^{\alpha-1}\mu (s-\varepsilon)|\overline{w}_{\varepsilon s}^{\alpha}| dt \Big\Vert_{\chi}\\
				\qquad &\le \frac{1}{2}C_{m}+n^{2}R^{2n-2+\beta}C_{m}T+n^{\alpha}C^{\alpha+1}_{m}T+n^{\alpha-1}\mu R^{n}C^{\alpha}_{m}T\\
				\qquad &\le C_{m},
			\end{array}
		$$
		where a sufficiently small $T=\frac{1}{2(n^{2}R^{2n-2+\beta}+n^{\alpha}C^{\alpha}_{m}+n^{\alpha-1}\mu R^{n}C^{\alpha-1}_{m})}$. Thus, the operator $F$ is a mapping from $A$ to itself and is uniformly bounded on $A$.\par
		Moreover, for arbitrary $\delta=\frac{\varepsilon}{3(n^{2}R^{2n-2+\beta} T+ n^{\alpha} C^{\alpha}_{m}T +  \alpha(n^{\alpha}C_{m}+n^{\alpha-1}\mu R^{n})C^{\alpha-1}_{m})}> 0$ and $\overline{w}_{\varepsilon 1}, \ \overline{w}_{\varepsilon 2}\in \chi$ satisfies $\Vert \overline{w}_{\varepsilon 1}-\overline{w}_{\varepsilon 2} \Vert_{\chi}<\delta$, by the mean value theorem we can obtain
		$$
			\begin{array}{ll}
				| \widetilde{w}_{\varepsilon 1}(s,t)-\widetilde{w}_{\varepsilon 2}(s,t) | &\le \int_{0}^{t} n^{2}s^{\frac{2n-2+\beta}{n}}|\overline{w}_{\varepsilon 1 ss}-\overline{w}_{\varepsilon 2 ss}| dt+ \int_{0}^{t} n^{\alpha}|\overline{w}_{\varepsilon 1}\overline{w}_{\varepsilon 1 s}^{\alpha}-\overline{w}_{\varepsilon 2}\overline{w}_{\varepsilon 2 s}^{\alpha}| dt\\ 
				\qquad & \qquad +\int_{0}^{t} n^{\alpha-1}\mu (s-\varepsilon)|\overline{w}_{\varepsilon 1 s}^{\alpha}-\overline{w}_{\varepsilon 2 s}^{\alpha}| dt\\
				\qquad &\le n^{2}R^{2n-2+\beta}\delta T+ \int_{0}^{t} n^{\alpha}|\overline{w}_{\varepsilon 1}-\overline{w}_{\varepsilon 2}||\overline{w}_{\varepsilon 1 s}^{\alpha}| dt\\
				\qquad & \qquad +\int_{0}^{t} (n^{\alpha}|\overline{w}_{\varepsilon 2}|+n^{\alpha-1}\mu R^{n})|\overline{w}_{\varepsilon 1 s}^{\alpha}-\overline{w}_{\varepsilon 2 s}^{\alpha}| dt\\
				\qquad &\le n^{2}R^{2n-2+\beta}\delta T+ n^{\alpha}\delta C^{\alpha}_{m}T +  \alpha(n^{\alpha}C_{m}+n^{\alpha-1}\mu R^{n})C^{\alpha-1}_{m}\delta,\\
				\qquad &\le \varepsilon.
			\end{array}
		$$
		Thus, Schauder fixed point theorem guarantees that there exists an element of $A$ satisfying $F\overline{w}_{\varepsilon} :=\overline{w}_{\varepsilon}$. Then $(2.16)$ possesses a classical solution $\overline{w}_{\varepsilon}\in C^{0}([\varepsilon,R^{n}]\times [0, T])\cap C^{2,1}([\varepsilon,R^{n}]\times (0, T])$. To obtain a global solution, we extend problem $(2.16)$ as follows, with 
		$$
			\left\{    
			\begin{array}{ll}
				\widehat{w}_{\varepsilon t}=n^{2}s^{\frac{2n-2+\beta}{n}}\widehat{w}_{\varepsilon ss}+n^{\alpha}\widehat{w}_{\varepsilon}\widehat{w}_{\varepsilon s}^{\alpha}-n^{\alpha-1}\mu (s-\varepsilon)\widehat{w}_{\varepsilon s}^{\alpha}, & s\in (\varepsilon,R^{n}), \ t>T,\\
				\widehat{w}_{\varepsilon}(\varepsilon,t)=0, \ \widehat{w}_{\varepsilon}(R^{n}, t)=\frac{m}{\omega_{n}}, & t>T,\\
				\widehat{w}_{\varepsilon}(s,T)=w_{\varepsilon}(s,T), & s\in (\varepsilon,R^{n}).
			\end{array}
			\right.
		$$
		Obviously, the Schuader fixed point theorem can be used once again for the postponed problem as above, and so on over and over again, we can obtain that there exists a solution $w_{\varepsilon}\in C^{0}([\varepsilon,R^{n}]\times [0, \infty))\cap C^{2,1}([\varepsilon,R^{n}]\times (0, \infty))$ to problem (2.16).\par
		Using Lemma 5.1 gives $0\le w_{\varepsilon}\le \frac{m}{\omega_{n}}$ in $[\varepsilon,R^{n}]\times [0, \infty)$. Since $(w_{\varepsilon})_{\varepsilon \in (0, R^{n})}$ is monotically decreasing in $\varepsilon$, $(w_{\varepsilon})_{\varepsilon \in (0, R^{n})}$ satisfies $w_{\varepsilon}\nearrow w$ in $(0,R^{n}]\times [0, \infty)$ as $\varepsilon\searrow 0$ with some limit function $w$ fulfilling $0\le w\le \frac{m}{\omega_{n}}$. Because $w_{0\varepsilon s}\ge 0$ and $w_{\varepsilon s}(s)\ge 0$ both on $\{\varepsilon\}\times (0, \infty)$ and on $\{R^{n}\}\times (0, \infty)$, we may obtain $w_{\varepsilon s}(s)\ge 0$ in $(\varepsilon, R^{n})\times (0, \infty)$. Then, based on interior parabolic Schauder estimates and the Arzelà-Ascoli theorem, we can further obtain that $w_{\varepsilon}\to w$ in $C_{loc}^{1,\frac{1}{2}}((0,R^{n}]\times [0, \infty))$ and in $C_{loc}^{2,1}((0,R^{n}]\times (0, \infty))$ as $\varepsilon\searrow 0$, and that it solves (2.19).
	\end{proof}
	
	\subsection{Classical solution of the hybrid problem (2.14)}	
	Since the existence of a solution $w\in C^{0}([0,R^{n}]\times [0, T))\cap C^{2,1}((0,R^{n}]\times (0, T))$ to the mixed problem (2.14) is obtained by studying the properties in a very short time. We can establish that
	\hspace*{\fill} \\
	\begin{lamme}
		Let $n\ge 2$, $R>0$, $\theta \in (0,1)$, $\mu =\frac{nm}{\omega_{n}R^{n}}$, $\beta >0$, $\alpha \ge 1$ and suppose that $w_{0}\in C^{1+\theta}([0,R^{n}])$ is as in $(2.18)$.\\
		Then there is a function $y(t)\in C^{1}([0,T^{*}))$ denote the solution to the following ODE system
		\begin{equation}
			\left\{    
			\begin{array}{ll}
				y'(t)=n^{\alpha}y^{\alpha+1}(t), & 0<t<T^{*},\\
				y(0)=y_{0},
			\end{array}
			\right.
		\end{equation}
		with
		\begin{equation}
			y_{0}:=\Arrowvert w_{0s} \Arrowvert_{L^{\infty}((0,R^{n}))}, 
		\end{equation}
		where $T^{*}>0$ denotes maximal time of existence of $(2.20)$. Furthermore, $w$ as in Lemma 2.3 satisfies
		\begin{equation}
			w(s,t)\le y(t)\cdot s\qquad for \ all \ (s,t)\in (0,R^{n}]\times[0,T^{*})
		\end{equation}
		Consequently,
		\begin{equation}
			w\in C^{0}([0,R^{n}]\times[0,T^{*})) \ with \ w(0,t)=0 \quad for \ all \ t\in [0,T^{*}).
		\end{equation}
	\end{lamme}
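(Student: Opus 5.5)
The plan is to use a comparison argument at the level of the approximate problems (2.16), and then pass to the limit $\varepsilon\searrow 0$ as in the proof of Lemma 2.3. First, the ODE (2.20) has the explicit solution
\[
y(t)=\Big(y_{0}^{-\alpha}-\alpha n^{\alpha}t\Big)^{-\frac{1}{\alpha}},
\]
with $T^{*}=\frac{1}{\alpha n^{\alpha}y_{0}^{\alpha}}$. Here $y_0>0$ because $w_0(R^n)=\frac{m}{\omega_n}>0$ and $w_0(0)=0$. So $y\in C^{1}([0,T^{*}))$, and $y$ is positive and increasing.

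Next, for fixed $\varepsilon$, I would compare $w_\varepsilon$ with the candidate supersolution $\overline{w}(s,t):=y(t)(s-\varepsilon)$ on $[\varepsilon,R^n]\times[0,T)$, for any $T<T^*$. On the boundaries the ordering holds:
\begin{itemize}
\item At $s=\varepsilon$ both functions vanish.
\item At $s=R^n$, $\overline{w}(R^n,t)\ge y_0(R^n-\varepsilon)$. Since $w_{0s}\le y_0$ and $w_0(0)=0$, we have $y_0R^n\ge \frac{m}{\omega_n}$. This gives only $y_0(R^n-\varepsilon)\ge \frac{m}{\omega_n}-y_0\varepsilon$, which is slightly too weak. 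I would therefore use instead $\overline{w}=y(t)(s-\varepsilon)\frac{R^n}{R^n-\varepsilon}$, or simply compare with $y_\varepsilon(t)(s-\varepsilon)$, where $y_\varepsilon$ solves (2.20) with initial value $y_{0}\frac{R^n}{R^n-\varepsilon}=\|w_{0\varepsilon s}\|_{L^\infty}$. Both choices converge to $y$ as $\varepsilon\to0$.
\item At $t=0$, $w_{0\varepsilon}(s)\le \|w_{0\varepsilon s}\|_\infty (s-\varepsilon)$ by the mean value theorem.
\end{itemize}

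In the interior, $\overline{w}_{ss}=0$ and $\overline{w}_s=y_\varepsilon$. So, using $\overline{w}\le y_\varepsilon s$ and dropping the nonpositive drift term $-n^{\alpha-1}\mu(s-\varepsilon)\overline{w}_s^\alpha$, the supersolution inequality reduces to
\[
y_\varepsilon'(t)(s-\varepsilon)\ \ge\ n^{\alpha}y_\varepsilon(t)(s-\varepsilon)\,y_\varepsilon^{\alpha}(t).
\]
This holds with equality by (2.20).

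The main obstacle is justifying the comparison principle, since the nonlinearity $n^\alpha w w_s^\alpha-n^{\alpha-1}\mu(s-\varepsilon)w_s^\alpha$ depends on both $w$ and $w_s$. Lemma 5.1 is cited but its content is not visible. I would argue directly. Set $z=w_\varepsilon-\overline{w}$, which satisfies a linear equation
\[
z_t\le a(s)z_{ss}+b(s,t)z_s+c(s,t)z .
\]
Here $a=n^2s^{(2n-2+\beta)/n}\ge \varepsilon^{(2n-2+\beta)/n}>0$ on $[\varepsilon,R^n]$; $c=n^\alpha w_{\varepsilon s}^\alpha$ is bounded; and $b$ comes from the mean value theorem applied to $\xi\mapsto\xi^\alpha$, which is bounded because $w_{\varepsilon}\in C^{2,1}$ up to $[\varepsilon,R^n]\times[\tau,T]$ and $w_{\varepsilon s}\ge0$. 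Near $t=0$ only $C^{1+\theta}$-type regularity is available, so I would work on $[\tau,T]$ and let $\tau\to0$ using continuity. Multiplying by $e^{-\lambda t}$ with $\lambda>\sup c$ and applying the classical weak maximum principle yields $z\le0$, that is, $w_\varepsilon(s,t)\le y_\varepsilon(t)(s-\varepsilon)\le y_\varepsilon(t)s$.

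Finally, I would let $\varepsilon\searrow0$, using $w_\varepsilon\to w$ pointwise from Lemma 2.3 and $y_\varepsilon\to y$ locally uniformly on $[0,T^*)$, to obtain (2.22). Then (2.22), together with $0\le w$ and the local uniform boundedness of $y$ on $[0,T]$ for each $T<T^*$, gives
\[
\sup_{t\le T}w(s,t)\to0 \quad\text{as } s\to0 .
\]
Combining this with $w\in C^0((0,R^n]\times[0,\infty))$ from Lemma 2.3 yields continuity up to $s=0$ with $w(0,t)=0$, which is (2.23).
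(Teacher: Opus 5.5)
Your proposal is correct and follows the paper's route: on $[\varepsilon,R^n]$ you compare $w_\varepsilon$ with a supersolution that is linear in $s$ and whose slope solves (2.20), using that the drift term $-n^{\alpha-1}\mu(s-\varepsilon)\overline{w}_s^\alpha$ only helps, and then let $\varepsilon\searrow0$; the paper justifies the comparison by citing Lemma 5.1 instead of your direct maximum-principle argument. The paper's unshifted profile $\overline{w}=y(t)s$ makes the boundary and initial orderings immediate ($\overline{w}(\varepsilon,t)\ge0$, $\overline{w}(R^n,t)\ge y_0R^n\ge\frac{m}{\omega_n}$, $w_{0\varepsilon}\le w_0\le y_0s$), so it needs no $\varepsilon$-dependent $y_\varepsilon$; also, your first alternative $y(t)(s-\varepsilon)\frac{R^n}{R^n-\varepsilon}$ fails the drift-free supersolution inequality, because a slope $ky$ with $k>1$ produces $k^{\alpha+1}>k$.
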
 
	\begin{proof}
		Since $0\le w \le \frac{m}{\omega_{n}}$, $y(t)\ge 0$. For $\varepsilon\in  (0,R^{n})$ and arbitrary $T\in (0,T^{*})$, set
		\begin{equation}
			\overline{w}(s,t):= y(t)\cdot s, \qquad (s,t)\in [\varepsilon, R^{n}]\times[0,T].
		\end{equation}
		Then we have 
		\begin{equation}
			\begin{array}{l}
				\overline{w}_{t}-n^{2}s^{\frac{2n-2+\beta}{n}}\overline{w}_{ss}-n^{\alpha}\overline{w}\overline{w}_{s}^{\alpha}+n^{n-1}\mu (s-\varepsilon)\overline{w}^{\alpha}_{s}\\ \qquad\qquad=y'(t)s-n^{\alpha}y^{\alpha+1}(t)s+n^{\alpha-1}\mu (s-\varepsilon)y^{\alpha}(t)\\
				\qquad\qquad=n^{\alpha-1}\mu (s-\varepsilon)y^{\alpha}(t)\ge 0 
			\end{array}
		\end{equation}
		in $(\varepsilon, R^{n})\times(0,T)$. Furthermore, by $(2.21)$ we deduce that for $s\in (\varepsilon,R^{n})$
		\begin{equation}
			\begin{array}{ll}
				w_{\varepsilon}(s, 0) & =w_{0\varepsilon}(s)\le w_{0}(s)=w_{0}(0)+\int_{0}^{s}w_{0s}(\rho)d\rho\\
				\qquad & \le\Arrowvert w_{0s} \Arrowvert_{L^{\infty}((0,R^{n}))}\cdot s\\
				\qquad & =y_{0}\cdot s=\overline{w}(s, 0).
			\end{array}
		\end{equation}
		From $(2.26)$ and $w_{0\varepsilon s}\ge 0$ in $[\varepsilon, R^{n}]$, we know that $y_{0}\ge \frac{m}{R^{n}\omega_{n}}=\frac{\mu }{n}$. Moreover, due to $y'(t)=n^{\alpha}y^{\alpha+1}(t)\ge 0$ and $y(0)\ge 0$ , we can obtain that for all $t\ge 0$ we have $y(t)\ge \frac{\mu }{n}$. Therefore, 
		\begin{equation}
			w_{\varepsilon}(\varepsilon, t)=0\le y(t)\cdot \varepsilon =\overline{w}(\varepsilon, t)
		\end{equation}
		and 
		\begin{equation}
			w_{\varepsilon}(R^{n}, t)=\frac{m}{\omega_{n}}=\frac{\mu }{n}R^{n}\le y(t)\cdot R^{n} =\overline{w}(R^{n}, t) \quad for \ all \ t\in (0,T).
		\end{equation}
		Hence, By $(2.16), \ (2.25)-(2.28)$, and by Lemma 5.1, 
		$$
			w_{\varepsilon}(s,t)\le \overline{w}(s,t)=y(t)\cdot s \qquad for \ all \ (s,t)\in [\varepsilon, R^{n}]\times [0,T].
		$$
		Obviously, taking $T \nearrow T^{*}$ this inequality holds in $(s,t)\in [\varepsilon, R^{n}]\times [0,T^{*})$. Finally, because $\varepsilon \in (0, R^{n})$ has been chose arbitrarily, let $\varepsilon \searrow 0$, we can obtain $(2.22)$ and in addition $(2.23)$ is obvious. 
	\end{proof}
	Notice that Lemma 2.3 and Lemma 2.4 construct a solution to the system (2.14) belonging to $[0,R^{n}]\times[0,T^{*})$. Below we make a uniqueness claim about the solution of (2.14).
	\hspace*{\fill} \\
	\begin{lamme}
		Let $n\ge 2$, $R>0$, $\theta \in (0,1)$, $\mu =\frac{nm}{\omega_{n}R^{n}}$, $\beta >0$, $\alpha \ge 1$ and assume that $w_{0}\in C^{1+\theta}([0,R^{n}])$ is as in $(2.18)$.\\
		If the function $w$ is as defined in Lemma 2.4, let $T>0$ be such that
		\begin{equation}
			w\in C^{0}([0,R^{n}]\times[0,T)) \ with \ w(0,t)=0 \quad for \ all \ t\in (0,T),
		\end{equation}
		and  
		$$w_{s}\in L^{\infty}((0,R^{n})\times (0,T)).$$
		Then there exists a unique solution $w\in C^{0}([0,R^{n}]\times [0, T))\cap C^{2,1}((0,R^{n}]\times (0, T))$ to problem $(2.14)$.
	\end{lamme}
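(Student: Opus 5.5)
Existence is already in hand: Lemma 2.3 gives a function $w$ solving (2.19) that belongs to $C^{2,1}((0,R^{n}]\times(0,\infty))$, and the assumption (2.29) supplies continuity up to $s=0$ together with $w(0,t)=0$. Hence $w$ solves the full hybrid problem (2.14) on $[0,R^{n}]\times[0,T)$, and what remains is uniqueness. The plan is to take two solutions $w_1,w_2$ of (2.14) in $C^{0}([0,R^{n}]\times[0,T))\cap C^{2,1}((0,R^{n}]\times(0,T))$ with $w_{1s},w_{2s}\in L^{\infty}((0,R^{n})\times(0,T'))$ for every $T'<T$. We then show that the difference $d:=w_1-w_2$ vanishes, using a weighted $L^{2}$ energy estimate with a singular weight $s^{-\gamma}$ near the origin, in the spirit of Flüchter's argument for $\alpha=1$.

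For fixed $T'<T$ we have $d$ satisfying $d(0,t)=d(R^{n},t)=0$ and
$$
d_t=n^{2}s^{\frac{2n-2+\beta}{n}}d_{ss}+n^{\alpha}\big(w_1w_{1s}^{\alpha}-w_2w_{2s}^{\alpha}\big)-n^{\alpha-1}\mu s\big(w_{1s}^{\alpha}-w_{2s}^{\alpha}\big).
$$
The mean value theorem gives $w_{1s}^{\alpha}-w_{2s}^{\alpha}=a(s,t)d_s$ with $|a|\le \alpha M^{\alpha-1}$, where $M$ bounds $w_{is}$; this is where the $L^\infty$ bound on $w_s$ is essential for $\alpha>1$. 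We also have $w_1w_{1s}^{\alpha}-w_2w_{2s}^{\alpha}=w_{1s}^{\alpha}d+w_2 a d_s$, and $w_2\le Ms$ since $w_2(0,t)=0$. Every first-order term therefore has the form $b(s,t)\,s\,d_s$ or $c\,d$ with $b,c$ bounded. We multiply by $s^{-\gamma}d$ and integrate over $(\eta,R^{n})$. After an integration by parts, the diffusion term yields $-n^{2}\int s^{\frac{2n-2+\beta}{n}-\gamma}d_s^{2}$ plus a lower-order term. Young's inequality absorbs $\int s^{1-\gamma}|d\,d_s|$ into $\int s^{\frac{2n-2+\beta}{n}-\gamma}d_s^{2}+C\int s^{2-\frac{2n-2+\beta}{n}-\gamma}d^{2}$.

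The hard step is making this weight bookkeeping close. The diffusion exponent $p:=\frac{2n-2+\beta}{n}=2-\frac{2-\beta}{n}$ may be less than $2$, so the leftover term $s^{2-p-\gamma}d^{2}=s^{\frac{2-\beta}{n}-\gamma}d^{2}$ is only controlled by $s^{-\gamma}d^{2}$ when $\frac{2-\beta}{n}\ge 0$. For $\beta>2$ it is less singular, but in that case the diffusion is weaker. My first attempt would use $|d|\le 2Ms$ to bound $\int s^{-\gamma}d^2$ near $0$ by a Hardy-type inequality, $\int s^{p-2-\gamma}d^{2}\le C\int s^{p-\gamma}d_s^{2}$ (valid when $p-1-\gamma\neq 0$ and $d(0)=0$). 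This would let the dissipation absorb everything, and I would choose $\gamma$ accordingly, for example $\gamma$ slightly larger than $p$ or simply $\gamma=0$ when $\beta\le 2$.

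We must also check that the boundary terms at $s=\eta$ vanish as $\eta\searrow0$. These are $\eta^{p-\gamma}d\,d_s$ and $\eta^{p-1-\gamma}d^{2}$, bounded by $C\eta^{p+1-\gamma}$ and $C\eta^{p+1-\gamma}$ using $|d|\le 2M\eta$ and $|d_s|\le 2M$, so it suffices to take $\gamma<p+1$. This yields $\frac{d}{dt}\int_0^{R^n}s^{-\gamma}d^{2}\le C\int_0^{R^n}s^{-\gamma}d^{2}$ with $d(\cdot,0)=0$. Gronwall then gives $d\equiv0$ on $(0,T')$, and letting $T'\nearrow T$ completes the proof.
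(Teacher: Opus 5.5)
Your existence step matches the paper's. The uniqueness argument, however, has a genuine gap: it only closes for $\beta\le 2$, while the lemma is asserted for every $\beta>0$ and $\alpha\ge1$.

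Write $p=\frac{2n-2+\beta}{n}$ and $d_t=n^2s^pd_{ss}+g\,d_s+c\,d$ with $|g|\le Cs$. After Young's inequality the drift leaves $\int s^{2-p-\gamma}d^2$. When $\beta>2$, i.e.\ $p>2$, this term is \emph{more} singular, not less as you wrote. It is worse than both $\int s^{-\gamma}d^2$ and the Hardy-controllable $\int s^{p-2-\gamma}d^2$, because $2-p-\gamma<-\gamma<p-2-\gamma$. Testing with $d(s)=\lambda^{-1}f(\lambda s)$, $f\in C_c^\infty((0,1))$ fixed and $\lambda\to\infty$, shows that no bound $\int s^{2-p-\gamma}d^2\le C\big(\int s^{p-\gamma}d_s^2+\int s^{-\gamma}d^2\big)$ holds, for any $\gamma$.

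The natural alternative is to integrate $\int s^{-\gamma}g\,d\,d_s$ by parts, but that requires $g_s$. Here $g=(n^\alpha w_2-n^{\alpha-1}\mu s)\,a$ with $a=\frac{w_{1s}^\alpha-w_{2s}^\alpha}{w_{1s}-w_{2s}}$, so $g_s$ involves $w_{1ss}$ and $w_{2ss}$, about which the hypotheses say nothing near $s=0$. For $\alpha=1$ one has $a\equiv1$ and $g_s=nw_{2s}-\mu$ is bounded, so the energy method works there for every $\beta$. The obstruction is specific to $\alpha>1$, $\beta>2$.

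Your sample choices of $\gamma$ also need adjusting:
\begin{itemize}
\item For $\gamma=0$ and $\beta<2$, the integration by parts produces $+\frac{n^2p(p-1)}{2}\int s^{p-2}d^2$. Since the Hardy constant $4/(p-1)^2$ is sharp, this can come arbitrarily close to $\frac{2p}{p-1}>1$ times the dissipation, so it cannot be absorbed.
\item For $\gamma>p\ge 3$, the functional $\int s^{-\gamma}d^2$ may be infinite, since you only know $|d|\le 2Ms$.
\item For $\beta\le 2$ the choice $\gamma=p$ does work. The diffusion term becomes exactly $-n^2\int d_s^2$ plus a vanishing boundary term, and $s^{2-2p}\le R^{n(2-p)}s^{-p}$ controls the drift remainder.
\end{itemize}

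The missing idea is to argue pointwise rather than in an integral norm, which is what the paper does. It invokes the comparison principle of Lemma 5.1 on $[0,R^n]$, in both directions. One takes the first time $t_0$ at which $z=w_1-w-\varepsilon e^{2C_1t}$ reaches $0$. The corresponding point $s_0$ is interior, because the two solutions agree at $s=0$, $s=R^n$ and $t=0$. At $(s_0,t_0)$:
\begin{itemize}
\item $z_s=0$, so all gradient nonlinearities cancel, however rough their coefficients are;
\item $n^2s_0^pz_{ss}\le 0$, however degenerate the diffusion is;
\item only $n^\alpha w_s^\alpha(w_1-w)$ survives, and the exponential factor absorbs it using the bound on $w_s$.
\end{itemize}
This works for every $\beta>0$ and $\alpha\ge 1$. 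It also gives more: since $w_{1s}=w_s$ at the touching point, boundedness of $w_s$ is needed for only one of the two solutions. The paper therefore gets uniqueness among all classical solutions continuous up to $s=0$ with $w(0,t)=0$, whereas your argument assumes $w_{1s},w_{2s}\in L^\infty$ for both.
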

	\begin{proof}
		By $(2.19)$ and $(2.29)$, $w$ is a classical solution of $(2.14)$ in $[0, R^{n}] \times [0, T)$. Assume that $w_{1}$ is also a solution of $(2.14)$, and since $w_{s}\in L^{\infty}((0,R^{n})\times (0,T))$, use Lemma 5.1 to obtain
		$$
			w(s,t)=w_{1}(s,t) \qquad for \ all \ (s,t)\in [0,R^{n}]\times [0,T). 
		$$
		This proof is complete.
	\end{proof}
	From this, the study of boundedness in $w_{s}$ can be found to be necessary. However, the solution of $(2.14)$ that we constructed in Lemma 2.4 is not sufficient to derive $w_{s}$ is bounded. Thus, here we will impose better restrictions on the initial data to ensure that $w$ is a convex function with respect to $s$.
	\hspace*{\fill} \\
	\begin{lamme}
		Let $n\ge 2$, $R>0$, $\theta \in (0,1)$, $\mu =\frac{nm}{\omega_{n}R^{n}}$, $\beta >0$, $\alpha \ge 1$ and $w_{0}\in C^{2+\theta}([0,R^{n}])$ be as in $(2.18)$ with
		\begin{equation}
			w_{0ss}(s)\le 0, \quad s\in (0,R^{n})
		\end{equation}
		as well as 
		\begin{equation}
			w_{0ss}(0)=0, \ w_{0s}(R^{n})=0 \ and \ w_{0ss}(R^{n})=0. 
		\end{equation}
		Then, $w\in C^{2,1}((0,R^{n}]\times [0, \infty))$ as in Lemma 2.3 with
		\begin{equation}
			w_{ss}(s,t)\le 0 \qquad for \ all \ (s,t)\in (0, R^{n})\times [0,\infty).
		\end{equation}
	\end{lamme}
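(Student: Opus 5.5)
We will prove concavity of $w(\cdot,t)$ by working at the level of the approximate problems (2.16) and passing to the limit $\varepsilon\searrow 0$ as in the proof of Lemma 2.3. For fixed $\varepsilon\in(0,R^{n})$, set $z_{\varepsilon}:=w_{\varepsilon ss}$. The plan is to show $z_{\varepsilon}\le 0$ in $(\varepsilon,R^{n})\times(0,\infty)$ by a comparison argument, namely Lemma 5.1 or a direct maximum principle, applied to the linear parabolic equation satisfied by $z_{\varepsilon}$. To get this equation, we first use the compatibility conditions (2.31) together with $w_{0}\in C^{2+\theta}$. These make the rescaled data $w_{0\varepsilon}$ from (2.17) compatible at the corners, so parabolic Schauder theory gives $w_{\varepsilon}\in C^{2+\theta,1+\theta/2}$ up to $t=0$. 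Interior regularity then lets us differentiate (2.16) twice in $s$ for $t>0$.

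Writing $a(s):=n^{2}s^{\frac{2n-2+\beta}{n}}$ and $p:=w_{\varepsilon s}\ge 0$, the first derivative gives
\[
p_{t}=a p_{ss}+a' p_{s}+n^{\alpha}p^{\alpha+1}+\alpha n^{\alpha}w_{\varepsilon}p^{\alpha-1}p_{s}-n^{\alpha-1}\mu p^{\alpha}-\alpha n^{\alpha-1}\mu(s-\varepsilon)p^{\alpha-1}p_{s}.
\]
Differentiating once more produces an equation of the form
\[
z_{t}=a z_{ss}+b(s,t)z_{s}+c(s,t)z .
\]
The term $a''p_{s}=a''z$ enters $c$, and all other terms either contain the factor $z=p_{s}$ or contain $z_{s}$. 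In particular, differentiating $n^{\alpha}p^{\alpha+1}$ yields $(\alpha+1)n^{\alpha}p^{\alpha}z$, and differentiating $\alpha n^{\alpha}w_{\varepsilon}p^{\alpha-1}z$ yields $\alpha n^{\alpha}p^{\alpha}z+\alpha(\alpha-1)n^{\alpha}w_{\varepsilon}p^{\alpha-2}z^{2}+\ldots$. The term quadratic in $z$ is written as $(\text{coefficient}\cdot z)\,z$ and absorbed into $c$. Since $\alpha\ge1$, the factor $p^{\alpha-2}$ causes no trouble when $\alpha\ge2$. For $1\le\alpha<2$ we need $p$ bounded away from $0$, or we must avoid this differentiation altogether (see the last paragraph). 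On compact subsets of $[\varepsilon,R^{n}]\times[0,T]$ the coefficients $b$ and $c$ are bounded, because $w_{\varepsilon}$ is smooth there. Hence the equation has no zeroth-order forcing, and the maximum principle applies once the boundary and initial signs are known.

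Initial data: $z(\cdot,0)=w_{0\varepsilon ss}=\big(\tfrac{R^{n}}{R^{n}-\varepsilon}\big)^{2}w_{0ss}(\cdot)\le0$ by (2.30).

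Boundary $s=\varepsilon$: since $w_{\varepsilon}(\varepsilon,t)=0$ for all $t$, we have $w_{\varepsilon t}=0$ there. The equation (2.16) at $s=\varepsilon$ then gives $a(\varepsilon)z=-n^{\alpha}\cdot0-0=0$, because $w_{\varepsilon}=0$ and $s-\varepsilon=0$. Thus $z(\varepsilon,t)=0$.

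Boundary $s=R^{n}$: from $w_{\varepsilon t}=0$ we get $a(R^{n})z=n^{\alpha-1}p^{\alpha}\big(\mu(R^{n}-\varepsilon)-n\tfrac{m}{\omega_{n}}\big)$. Since $\mu R^{n}=nm/\omega_{n}$, this equals $-n^{\alpha-1}\mu\varepsilon p^{\alpha}\le0$.

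So $z\le0$ on the parabolic boundary, and the comparison principle gives $w_{\varepsilon ss}\le0$. Note that the condition $w_{0s}(R^{n})=0$ is exactly what makes the corner compatibility at $(R^{n},0)$ hold.

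Finally, concavity is preserved under the locally uniform $C^{2,1}$ convergence $w_{\varepsilon}\to w$ from Lemma 2.3, which yields (2.32); at $t=0$ it is just (2.30).

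The main obstacle is justifying the differentiated equation, both in regularity up to the boundary and near $t=0$, and in handling the degeneracy of $p^{\alpha-2}$ when $1<\alpha<2$ at points where $p=0$. My first attempt would avoid differentiating twice. Instead I would compare $p=w_{\varepsilon s}$ with its spatial translates, or apply the maximum principle to $z$ via difference quotients $\big(p(s+h,t)-p(s,t)\big)/h$. These satisfy a linear equation whose coefficients are obtained from mean-value expressions of $p^{\alpha}$, which stay bounded for $\alpha\ge1$. Letting $h\to0$ then gives the result.
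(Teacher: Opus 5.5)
Your main line is the paper's own route. You approximate by (2.16), use (2.31) to get corner compatibility and Schauder regularity up to $t=0$, apply a maximum principle to $z=w_{\varepsilon ss}$, and let $\varepsilon\searrow0$. The paper compresses the key step into ``a standard comparison argument''. Your additions are correct and supply what the paper omits: the twice-differentiated equation has no source term, and the boundary identities $z(\varepsilon,t)=0$ and $a(R^n)z(R^n,t)=-n^{\alpha-1}\mu\varepsilon\,w_{\varepsilon s}^{\alpha}(R^n,t)\le0$ hold. For $\alpha=1$ the factor $\alpha(\alpha-1)$ kills the quadratic term, so that case is not actually problematic. For $\alpha=1$ and for $\alpha\ge2$ the coefficients $b,c$ are bounded, and your argument is complete up to routine interior regularity.

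The gap is $1<\alpha<2$, which you flag but do not close; the paper's one-line argument also passes over it.

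\textbf{Why it is a real difficulty.} There $c$ contains $\alpha(\alpha-1)n^{\alpha-1}p^{\alpha-2}\big(nw_\varepsilon-\mu(s-\varepsilon)\big)z$. This term has no a priori upper bound where $p$ is small. Moreover $p$ always vanishes at a corner, since $p(R^n,0)=\frac{R^n}{R^n-\varepsilon}w_{0s}(R^n)=0$ by (2.31). For $t\ge\tau>0$ this is harmless: $p>0$ on $[\varepsilon,R^n]\times[\tau,T]$ by the strong maximum principle for the linear equation satisfied by $p$, plus Hopf's lemma at $s=\varepsilon$ and $s=R^n$, which would contradict your two boundary identities. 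But the resulting bound on $c$ deteriorates as $\tau\searrow0$.

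\textbf{Why your fallback fails.} The equation is not translation invariant in $s$, because $a(s)$, the factor $s-\varepsilon$ and $w_\varepsilon$ all depend on $s$. So $(p(s+h,t)-p(s,t))/h$ satisfies an equation with inhomogeneous terms such as $\partial_s\big(\frac{a(s+h)-a(s)}{h}\,z\big)$. These are not multiples of the difference quotient. In addition, differencing $p^{\alpha-1}p_s$ brings back a factor $\xi^{\alpha-2}$.

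\textbf{A fix.} Keep $z$ but use the divergence form. With $G:=n^{\alpha-1}\big(nw_\varepsilon-\mu(s-\varepsilon)\big)$,
\[
z_t=\big(a z_s+(a'+\alpha G p^{\alpha-1})z\big)_s+\big(n^{\alpha}p^{\alpha}+\alpha n^{\alpha-1}(np-\mu)p^{\alpha-1}\big)z ,
\]
in which only $p^{\alpha-1}$ and $p^{\alpha}$ occur, both bounded for $\alpha\ge1$. Test with $z_+$ on $[\tau,T]$; the regularity needed is available there since $p>0$. The boundary terms vanish by your identities. Young's inequality with $a\ge a(\varepsilon)>0$ then gives $\frac{d}{dt}\int_\varepsilon^{R^n} z_+^2\le K\int_\varepsilon^{R^n} z_+^2$ with $K$ independent of $\tau$. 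Letting $\tau\searrow0$, using continuity of $z$ up to $t=0$ and $z(\cdot,0)\le0$, yields $z\le0$.
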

	\begin{proof}
		Defined $w_{0\varepsilon}\in C^{1+\theta}([\varepsilon, R^{n}])$ as $(2.17)$.
		From $(2.31)$ we may infer that
		$$
			w_{0\varepsilon ss}(\varepsilon)=0, \ w_{0\varepsilon ss}(R^{n})=0, \ w_{0\varepsilon s}(R^{n})=0.
		$$
		Let $w_{\varepsilon}\in C^{0}([\varepsilon,R^{n}]\times [0, \infty))\cap C^{2,1}([\varepsilon,R^{n}]\times (0, \infty))$, where $\varepsilon\in (0,R^{n})$ is arbitrarily.\\
		Hence 
		$$
			n^{2}\varepsilon^{\frac{2n-2+\beta}{n}}w_{0\varepsilon ss}(\varepsilon)+n^{\alpha}w_{0\varepsilon}(\varepsilon)w_{0\varepsilon s}^{\alpha}(\varepsilon)-n^{\alpha -1}\mu (\varepsilon-\varepsilon)w_{0\varepsilon s}^{\alpha}(\varepsilon)=0
		$$
		and
		$$
		n^{2}R^{n(\frac{2n-2+\beta}{n})}w_{0\varepsilon ss}(R^{n})+n^{\alpha}w_{0\varepsilon}(R^{n})w_{0\varepsilon s}^{\alpha}(R^{n})-n^{\alpha -1}\mu (R^{n}-\varepsilon)w_{0\varepsilon s}^{\alpha}(R^{n})=0
		$$
		Hence, the compatibility conditions are satisfied, allowing $w_{\varepsilon}\in C^{2,1}([\varepsilon,R^{n}]\times [0, \infty))$ to be derived by Schauder estimation. Let $\varepsilon \searrow 0$ then we get the desired regularity with respect to $w$. \\
		Since $w_{0\varepsilon ss}\le 0$, a standard comparison argument yields
		\begin{equation}
			w_{\varepsilon ss}(s, t)\le 0 \qquad for \ all \ (s,t)\in (\varepsilon, R^{n})\times [0, \infty), 
		\end{equation}
		whence we can immediately infer (2.32).
	\end{proof}
	\hspace*{\fill} \\
	\begin{lamme}
		Let $n\ge 2$, $R>0$, $\theta \in (0,1)$, $\mu =\frac{nm}{\omega_{n}R^{n}}$, $\beta >0$, $\alpha \ge 1$ and $w_{0}\in C^{2+\theta}([0,R^{n}])$ as described in $(2.18)$ with $(2.30)$ and $(2.31)$. \\
		Let $w$ denote the global solution of (2.16) in Lemma 2.3. Then, as shown in Lemma 2.4, when $T^{*}>0$, we have that for each $T\in (0,T^{*})$ there exists $C=C(T)>0$ such that
		\begin{equation}
			w_{s}(s,t)\le C \qquad for \ all \ (s,t)\in (0, R^{n}]\times [0,T].
		\end{equation}
	\end{lamme}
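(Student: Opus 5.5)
Since $w(\cdot,t)$ is concave by Lemma 2.6, $w_s(\cdot,t)$ is nonincreasing in $s$. It therefore suffices to bound $w_s$ near $s=0$. Concavity together with $w(0,t)=0$ (Lemma 2.4) gives, for every $s\in(0,R^{n}]$,
$$
w_s(s,t)\le \frac{w(s,t)-w(0,t)}{s}=\frac{w(s,t)}{s}.
$$
This is the chord-slope inequality: the slope at the right endpoint is at most the secant slope over $[0,s]$.

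The remaining ingredient is the linear upper barrier from Lemma 2.4, $w(s,t)\le y(t)\,s$ on $(0,R^{n}]\times[0,T^{*})$. Combining the two bounds yields $w_s(s,t)\le y(t)$ for all $(s,t)\in(0,R^{n}]\times[0,T^{*})$. Fix $T\in(0,T^{*})$. The solution $y$ of (2.20) is continuous and increasing on $[0,T]$, and it is finite there because $T<T^{*}$. Hence one may take $C(T):=y(T)$. Explicitly, $y(T)=\bigl(y_0^{-\alpha}-\alpha n^{\alpha}T\bigr)^{-1/\alpha}$, which is finite precisely for $T<T^{*}=\frac{1}{\alpha n^{\alpha}y_0^{\alpha}}$. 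This gives (2.34). The lower bound $w_s\ge0$ follows from the monotonicity in Lemma 2.3, although the statement only asks for the upper bound.

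The one point needing care is the chord inequality at the degenerate endpoint $s=0$. The function $w(\cdot,t)$ is $C^{2}$ only on $(0,R^{n}]$, so I argue on $[\eta,s]$ with $\eta>0$. By the mean value theorem and $w_{ss}\le0$, we get $w_s(s,t)\le \frac{w(s,t)-w(\eta,t)}{s-\eta}$. Letting $\eta\searrow0$ and using the continuity $w(\eta,t)\to w(0,t)=0$ from (2.23) gives the claim. At $t=0$ the same argument works directly with $w_0$, since $w_{0s}\le y_0$ by definition (2.21).

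If instead one wanted to avoid relying on Lemma 2.4's barrier, the alternative is a comparison argument for $z=w_{\varepsilon s}$ in the approximate problem (2.16). Differentiating the equation gives a parabolic equation for $z$ whose only troublesome terms are $n^{\alpha}z^{\alpha+1}$ and $-n^{\alpha-1}\mu z^{\alpha}$. One would then compare $z$ with $y(t)$ via Lemma 5.1 and control the boundary values of $z$ using concavity. The barrier route above is shorter, so it is the one I would follow.
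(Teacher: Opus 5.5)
Your proof is correct and matches the paper's: the paper uses the barrier $w\le y(T)s$ from Lemma 2.4, together with $w(0,t)=0$, to get $\liminf_{s\searrow0}w_s(s,t)\le C$. It then uses the concavity $w_{ss}\le0$ from Lemma 2.6 to extend this bound to all of $(0,R^n]$. Your chord-slope inequality $w_s(s,t)\le w(s,t)/s$ combines those two steps into one, and your approximation on $[\eta,s]$ at the degenerate endpoint spells out a step the paper leaves implicit.
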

	\begin{proof}
		Let $0<T<T^{*}$, Lemma 2.4 ensures that
		$$
			w(s,t)\le y(T)\cdot s \le Cs \qquad (s,t)\in (0,R^{n}]\times [0,T]
		$$
		for some $C=C(T)>0$. Thus necessarily for $t\in (0,T)$
		$$
			\liminf_{s\searrow 0}w_{s}(s,t)\le C. 
		$$
		In addition, by Lemma 2.6, $w\in C^{2,1}((0,R^{n}]\times [0, \infty))$ satisfies
		\begin{equation}
			w_{ss}(s,t)\le 0 \qquad for \ all \ (s,t)\in (0, R^{n})\times [0,\infty).
		\end{equation}
		We can obtain $w_{s}(\cdot, t)$ to be monotonically decreasing for all $t\in [0,\infty)$, then $(2.34)$.
	\end{proof}

	\section{Existence uniqueness of solutions to the Keller-Segel system}
	Now using the local existence of a solution to the hybrid problem $(2.14)$, we accordingly obtain a local solution of the radial problem $(2.1)$ in radial coordinates, or more precisely, a local solution of the Keller-Segel problem $(1.1)$.
	\hspace*{\fill} \\
	\begin{theorem}
		Suppose that $n\ge 2$, $R>0$, $\Omega =B_{R}(0)\subset \mathbb{R}^{n}$, $\beta>0$, $\alpha \ge 1$, and write $\Omega_{0}:=\overline{\Omega} \setminus \{0\}$. For $\theta\in (0,1)$ and $u_{0}\in C^{\theta}(\overline{\Omega})$ conform to $(1.2)$, then there exists a radially symmetric classical solution $(u,v)$ of $(1.1)$ satisfying Definition 1.1, fulfilling
		\begin{equation}
			\left\{    
			\begin{array}{lll}
				u\in C^{0}(\Omega_{0}\times [0,T_{0}))\cap C^{2,1}(\Omega_{0}\times (0,T_{0})),\\
				v\in C^{2,0}(\Omega_{0}\times (0,T_{0})),
			\end{array}
			\right.
		\end{equation}
		for some $T_{0}\in (0,\infty]$. This solution $u$ is nonnegative and satisfies the conservation of mass property, i.e.
		\begin{equation}
			\int_{\Omega}u(\cdot, t)=\int_{\Omega}u_{0}=:m\qquad for \ all \ t\in (0,T_{0}). 
		\end{equation}
	\end{theorem}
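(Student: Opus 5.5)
The plan is to obtain $(u,v)$ from the hybrid problem $(2.14)$ by reversing the mass distribution transformation $(2.12)$. Given $u_{0}\in C^{\theta}(\overline{\Omega})$ radial and nonnegative, $w_{0}$ defined by $(2.15)$ satisfies $w_{0s}(s)=\frac1n u_{0}(s^{\frac1n})$. So $w_{0}\in C^{1+\theta'}([0,R^{n}])$ for some $\theta'\in(0,1)$, because $s\mapsto s^{1/n}$ is Hölder continuous. Moreover $w_{0}(0)=0$, $w_{0}(R^{n})=\frac{m}{\omega_{n}}$ and $w_{0s}\ge 0$, which is exactly $(2.18)$.

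Lemma 2.3 then yields a solution $w\in C^{1,\frac12}_{loc}((0,R^{n}]\times[0,\infty))\cap C^{2,1}_{loc}((0,R^{n}]\times(0,\infty))$ of $(2.19)$ with $w_{s}\ge 0$. Lemma 2.4 yields $T^{*}>0$ such that $w$ is continuous up to $s=0$ with $w(0,t)=0$ on $[0,T^{*})$, so $w$ solves $(2.14)$ there.

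Next I would set
\[
u(r,t):=n\,w_{s}(r^{n},t),\qquad v_{r}(r,t):=\frac{1}{r^{n-1}}\Big(\frac{\mu r^{n}}{n}-w(r^{n},t)\Big),
\]
and fix $v$ by integrating in $r$ and normalizing so that $\int_{\Omega}v=0$. Nonnegativity of $u$ comes from $w_{s}\ge 0$. Continuity of $u$ on $(0,R]\times[0,T_{0})$ comes from $w\in C^{1,\frac12}_{loc}$ together with $w_{s}(\cdot,0)=w_{0s}$.

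For $C^{2,1}$-regularity of $u$ on $(0,R]\times(0,T_{0})$, I would differentiate the equation in $(2.19)$ in $s$. Since $w_{s}$ solves a linear parabolic equation with locally Hölder coefficients, away from $s=0$, interior and boundary Schauder estimates give $w_{s}\in C^{2,1}_{loc}((0,R^{n}]\times(0,T_{0}))$. At $s=R^{n}$, the condition $w(R^{n},t)=\frac{m}{\omega_n}$ gives $w_{t}=0$ there. Evaluating the equation at $s=R^n$ and using $w=\frac{\mu}{n}s$ at that point, the two first-order terms cancel, so $w_{ss}(R^{n},t)=0$, which is $u_{r}(R,t)=0$. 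Also $v_{r}(R,t)=0$ by the definition of $\mu$.

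Then I would reverse the computation leading to $(2.14)$. Differentiating $w_{t}=n^{2}s^{\frac{2n-2+\beta}{n}}w_{ss}+n^{\alpha}ww_{s}^{\alpha}-n^{\alpha-1}\mu sw_{s}^{\alpha}$ in $s$ and converting back via $(2.13)$ gives the first equation of $(2.1)$. Differentiating $r^{n-1}v_{r}=\frac{\mu r^{n}}{n}-w(r^{n},t)$ gives the second. The main obstacle is this regularity of $w_{s}$, since the coefficients of the differentiated equation involve $w_{s}^{\alpha-1}w_{ss}$. I would first use a bootstrap: $C^{1+\theta}$ interior regularity of $w$, then Hölder continuity of the coefficients, then Schauder estimates.

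For mass conservation $(3.2)$, I would use $w(R^{n},t)=\frac{m}{\omega_{n}}$ directly. It gives $\int_{0}^{R}\rho^{n-1}u\,d\rho=w(R^{n},t)-w(0,t)=\frac{m}{\omega_{n}}$ for all $t\in(0,T_{0})$, which is $(3.2)$ after multiplying by $\omega_{n}$ (with the paper's normalization of $\omega_{n}$). This route needs no boundedness of $u$, so Lemma 2.2 is not required.

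Finally, I take $T_{0}$ to be the maximal time up to which this construction extends, that is, on which $w(0,t)=0$ and $u$ stays locally bounded. Choosing $T_0$ maximal in this way gives the blow-up alternative of Definition 1.1, and $T_{0}\ge T^{*}>0$.
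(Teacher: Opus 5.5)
\textbf{Same route as the paper, with one unjustified final step.} Your construction is the paper's own. You build $w_0$ from $u_0$, use Lemmas 2.3 and 2.4 to get $w$ solving (2.14) on $[0,T^*)$, and then invert via $u=n\,w_s(r^n,t)$ and $v_r$ as in (3.11), exactly as in Lemma 3.1, including $w_{ss}(R^n,t)=0$ from $w_t(R^n,t)=0$.

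Several of your details improve on the paper:
\begin{itemize}
\item In general $w_0$ is only $C^{1+\theta/n}$ near $s=0$, as you implicitly noted.
\item Your mass conservation via $\int_0^R\rho^{n-1}u\,d\rho=w(R^n,t)-w(0,t)$ is the right argument. Lemma 2.2 needs $u\in L^\infty$, which is not available under the hypotheses of this theorem, and the paper simply asserts (3.13).
\item $T_0\ge T^*$ is the correct direction. The paper's (3.12) is reversed, which contradicts the maximality of $T_0$ together with (2.23).
\end{itemize}

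\textbf{The gap: your last paragraph.} There are two problems.

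First, suppose ``$u$ stays locally bounded'' means $u\in L^\infty(\Omega\times(0,T))$ for $T<T_0$. Then $T_0\ge T^*$, and even $T_0>0$, no longer follows. Lemma 2.4 gives only $w\le y(t)s$, not a bound on $w_s$. The paper obtains $w_s\in L^\infty$ only through the concavity argument of Lemmas 2.6--2.7, under the additional hypotheses of Theorem 3.2. So define $T_0$ as in Lemma 3.1, solely by $w\in C^0([0,R^n]\times[0,T_0))$ with $w(0,t)=0$. Boundedness on compact subsets of $\Omega_0\times[0,T_0)$ is then automatic.

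Second, maximality by itself does not yield the blow-up alternative of Definition 1.1; you need an extension argument. One that works with the paper's tools:
\begin{itemize}
\item Suppose $u\le C$ on $(0,R]\times(T_0-\eta,T_0)$. Then $w(s,t)\le\frac{C}{n}s$ there.
\item By continuity in $t$ for fixed $s>0$, $w(s,T_0)\le\frac{C}{n}s$. Since $w_\varepsilon\nearrow w$, also $w_\varepsilon(\cdot,T_0)\le\frac{C}{n}s$.
\item Restart the comparison of Lemma 2.4 at time $T_0$ with the supersolution $\hat y(t-T_0)\,s$, where $\hat y'=n^{\alpha}\hat y^{\alpha+1}$ and $\hat y(0)=\max\{\frac{C}{n},\frac{\mu}{n}\}$. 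This gives $w\le\hat y(t-T_0)s$ on some $[T_0,T_0+\tau)$.
\item Hence $w$ stays continuous at $s=0$ with $w(0,t)=0$ beyond $T_0$, contradicting the maximality of $T_0$.
\end{itemize}
The paper's proof does not address this clause at all. So this is a missing step relative to the statement, not a departure from the paper's route; either supply it or drop the claim.
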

	\hspace*{\fill} \\
	\begin{theorem}
		Assume that the conditions of Theorem 1.1 are satisfied and that $(u, v)$ denotes the classical solution of (1.1) identified in Theorem 1.1.\\
		Suppose that at some $\theta\in (0,1)$, the other $u_{0}\in C^{1+\theta}(\overline{\Omega})$ is radially decreasing and has the following properties
		\begin{equation}
			u_{0}=0 \ and \ \bigtriangledown u_{0}\cdot r=0 \qquad on \ \partial\Omega
		\end{equation}
		as well as 
		\begin{equation}
			|\bigtriangledown u_{0}(x)|\le c_{0}|x|^{n-1+\theta} \quad for \ some \ c_{0}>0.
		\end{equation}
		Then for $T_{0}>0$ as in Theorem 3.1 we have that
		\begin{equation}
			u\in C^{1,0}(\Omega_{0}\times [0,T_{0})) \quad is \ radially \ decreasing.
		\end{equation}
		Moreover, for some $T^{*}\in (0,T_{0}]$ and each $T\in (0, T^{*})$, there exists $C=C(T)>0$ such that 
		\begin{equation}
			u(x,t)\le C \qquad for \ all \ (x,t)\in \Omega_{0}\times [0,T].
		\end{equation}
		In conclusion, there exists a unique solution $(u, v)$ of $(1.1)$ in $\Omega_{0}\times [0,T^{*})$, i.e.
		\begin{equation}
			\left\{    
			\begin{array}{lll}
				u\in C^{0}(\Omega_{0}\times [0,T^{*}))\cap C^{2,1}(\Omega_{0}\times (0,T^{*})),\\
				v\in C^{2,0}(\Omega_{0}\times (0,T^{*})),
			\end{array}
			\right.
		\end{equation}
		which has the properties that
		\begin{equation}
			 0\le u\in L^{\infty}(\Omega\times(0,T)) \ and \ \bigtriangledown v \in L^{\infty}(\Omega\times(0,T); R^{n})
		\end{equation}
		for all $T\in (0,T^{*})$.
	\end{theorem}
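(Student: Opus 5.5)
The plan is to translate the hypotheses on $u_0$ into hypotheses on $w_0$ as in $(2.15)$. Then Lemmas 2.3--2.7 give concavity and a bound on $w_s$ for the hybrid problem, and this is transferred back to $u$ through $u(r,t)=n\,w_s(r^n,t)$ and the formula $(2.3)$ for $v_r$.

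\emph{Step 1 (initial data).} Let $w_0$ be defined by $(2.15)$, so that $w_{0s}(s)=\frac1n u_0(s^{1/n})$ and $w_{0ss}(s)=\frac1{n^2}s^{\frac1n-1}u_{0r}(s^{1/n})$.
\begin{itemize}
\item Since $u_0$ is radially decreasing, $u_{0r}\le0$, which gives $(2.30)$.
\item The condition $(3.4)$ gives $|w_{0ss}(s)|\le \frac{c_0}{n^2}s^{\frac1n-1}s^{\frac{n-1+\theta}{n}}=\frac{c_0}{n^2}s^{\theta/n}$. Hence $w_{0ss}$ extends continuously to $s=0$ with $w_{0ss}(0)=0$, and it is H\"older continuous near $0$. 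Away from $0$, H\"older regularity of $w_{0ss}$ comes from $u_0\in C^{1+\theta}$.
\item The conditions $(3.3)$ give $w_{0s}(R^n)=\frac1n u_0(R)=0$ and $w_{0ss}(R^n)=0$, which is $(2.31)$.
\item $(2.18)$ follows from nonnegativity of $u_0$ and from $w_0(R^n)=m/\omega_n$.
\end{itemize}
So $w_0\in C^{2+\theta'}([0,R^n])$ for some $\theta'\in(0,1)$, and Lemmas 2.6 and 2.7 apply to the solution $w$ of Lemma 2.3. I also take $(u,v)$ of Theorem 3.1 to be the one built from this $w$ by $u=nw_s(r^n,t)$, with $v$ recovered from $(2.3)$ and normalized by $\int_\Omega v=0$.

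\emph{Step 2 (monotonicity and regularity).} Lemma 2.6 gives $w\in C^{2,1}((0,R^n]\times[0,\infty))$ and $w_{ss}\le0$. Since $u_r(r,t)=n^2r^{n-1}w_{ss}(r^n,t)$, we get $u\in C^{1,0}(\Omega_0\times[0,T_0))$ with $u_r\le0$, which is $(3.5)$.

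\emph{Step 3 (boundedness).} Take $T^*$ to be the blow-up time of the ODE $(2.20)$, possibly capped at $T_0$. For $T<T^*$, Lemma 2.7 gives $0\le w_s\le C(T)$, so $0\le u\le nC(T)$ on $\Omega_0\times[0,T]$, which is $(3.6)$. Lemma 2.4 gives $w(0,t)=0$, so $\int_0^r\rho^{n-1}u\,d\rho=w(r^n,t)$. Then $(2.3)$ holds, and Lemma 2.1 yields $|v_r|\le \frac2n\|u\|_\infty\, r$, hence $\nabla v\in L^\infty$. This proves $(3.8)$.

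\emph{Step 4 (uniqueness).} Let $(\tilde u,\tilde v)$ be another solution with the properties $(3.8)$.
\begin{itemize}
\item Lemma 2.1 (converse part) shows that $\tilde v_r$ is given by $(2.3)$.
\item Lemma 2.2 gives mass conservation, so the associated $\tilde w$ solves $(2.14)$ with the same boundary data, has $\tilde w(0,t)=0$, and satisfies $\tilde w_s=\tilde u/n\in L^\infty$.
\item Lemma 2.5 (a comparison argument, Lemma 5.1) then gives $\tilde w=w$. Differentiating in $s$ gives $\tilde u=u$, and $(2.3)$ together with $\int_\Omega v=0$ gives $\tilde v=v$.
\end{itemize}

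The main obstacle is Step 1: making sure that $(3.4)$ really provides the H\"older regularity of $w_{0ss}$ at $s=0$ and the compatibility conditions needed in Lemma 2.6. Away from the origin this is standard; at the origin I would use the decay $|\nabla u_0|\le c_0|x|^{n-1+\theta}$, which gives the bound $|w_{0ss}|\lesssim s^{\theta/n}$. A secondary issue is justifying that the comparison in Lemma 2.5 applies despite the degeneracy at $s=0$. For this I would use $w(0,t)=\tilde w(0,t)=0$ and the bound on $w_s$ to control the nonlinear term $w w_s^\alpha$ by a Lipschitz constant.
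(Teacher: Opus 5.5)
Your proposal is correct and follows the paper's proof essentially step for step. You translate radial monotonicity together with (3.3)--(3.4) into (2.18), (2.30) and (2.31) for $w_0$, and invoke Lemmas 2.6 and 2.7 (the paper packages this as Lemma 3.3) to get $u_r\le 0$ and $u\le C(T)$. You then bound $v_r$ through (2.3) and obtain uniqueness as in Lemma 3.2, via Lemma 2.1 and the comparison-based Lemma 2.5. Your version is slightly more careful than the paper's in two places: you only claim $w_0\in C^{2+\theta'}$, which is right, since $|w_{0ss}|\le \frac{c_0}{n^2}s^{\theta/n}$ does not give exponent $\theta$ in general while Lemma 2.6 accepts any exponent; and you explicitly use Lemma 2.2 to secure $\tilde w(R^n,t)=m/\omega_n$ for the competitor.
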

	\hspace*{\fill} \\
	Notice that combining multiple results from the previous section, we are able to give a proposition about the existence of solutions of the form (2.1).
	\hspace*{\fill} \\
	\begin{lamme}
		Let $n\ge 2$, $R>0$, $\theta \in (0,1)$, $\mu =\frac{nm}{\omega_{n}R^{n}}$, $\beta >0$, $\alpha \ge 1$ and assume that $w_{0}\in C^{1+\theta}([0,R^{n}])$ is as in $(2.18)$.\\
		Moreover, we choose the maximum $T_{0} > 0$ such that
		$$
			w\in C_{loc}^{1,\frac{1}{2}}((0,R^{n}]\times [0, \infty))\cap C_{loc}^{2,1}((0,R^{n}]\times (0, \infty))
		$$
		is constructed by Lemma 2.3 is in $C^{0}([0,R^{n}]\times[0,T_{0}))$ with 
		$$
			w(0,t)=0 \quad for \ all \ t\in [0,T_{0}). 
		$$
		Then for $u_{0}\in C^{\theta}([0, R])$ defined via
		\begin{equation}
			u_{0}(r)=n\cdot w_{0s}(r^{n}), \qquad r\in [0,R], 
		\end{equation}
		the functions $u\in C^{0}((0, R]\times [0,T_{0}))\cap C^{2,1}((0, R]\times (0,T_{0}))$ defined by
		\begin{equation}
			u(r,t)=n\cdot w_{s}(r^{n}, t),
		\end{equation}
		and $v\in C^{2,0}((0, R]\times (0,T_{0}))$ with
		\begin{equation}
			v_{r}(r,t)=\frac{1}{r^{n-1}} \Big (\frac{\mu r^{n}}{n}-\int_{0}^{r}\rho ^{n-1}u(\rho, t)d\rho \Big ) \qquad (r,t)\in (0,R]\times (0,T_{0}), 
		\end{equation}
		solves $(2.1)$ classically in $(0,R]\times [0,T_{0})$.\\
		Moreover, with $T^{*}$ as in Lemma 2.4,
		\begin{equation}
			T_{0}\le T^{*}.
		\end{equation}
		Furthermore, $u$ is nonnegative and satisfies that mass is conserved, i.e.
		\begin{equation}
			\int_{0}^{R}\rho ^{n-1}u(\rho, t)d\rho =\int_{0}^{R}\rho ^{n-1}u_{0}(\rho)d\rho
		\end{equation}
		for all $t\in (0,T_{0})$.
	\end{lamme}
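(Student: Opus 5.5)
We will obtain $(u,v)$ by differentiating the hybrid solution of Lemma 2.3 in $s$, and verify $(2.1)$ by reversing the computation that led from $(2.1)$ to $(2.14)$.

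\textbf{Step 1: regularity and initial data.} Let $w$ be the function from Lemma 2.3. Since $w\in C^{2,1}_{loc}((0,R^{n}]\times(0,\infty))$ solves $(2.19)$ with H\"older continuous coefficients on $(0,R^{n}]$, interior parabolic Schauder theory applied to the equation differentiated in $s$ gives $w_{s}\in C^{2,1}_{loc}((0,R^{n}]\times(0,\infty))$. Hence $u(r,t)=n w_{s}(r^{n},t)$ lies in $C^{2,1}((0,R]\times(0,T_{0}))$, and the $C^{1,\frac12}_{loc}$ convergence statement of Lemma 2.3 gives $u\in C^{0}((0,R]\times[0,T_{0}))$. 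The monotonicity $w_{s}\ge 0$ from Lemma 2.3 yields $u\ge0$, and $(2.43)$ together with $w(\cdot,0)=w_{0}$ shows that $u(\cdot,0)=u_{0}$.

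\textbf{Step 2: the identity for $w$.} Because $w(0,t)=0$ on $[0,T_{0})$, we have $w(s,t)=\int_{0}^{s}w_{s}(\sigma,t)\,d\sigma$. Substituting $\sigma=\rho^{n}$ gives
\[
w(r^{n},t)=\int_{0}^{r}\rho^{n-1}u(\rho,t)\,d\rho .
\]
This is exactly where the choice of $T_{0}$ enters, and I expect it to be the main delicate point. Without $w(0,t)=0$, an extra constant $w(0,t)$ would appear, and the formula $(2.45)$ would no longer correspond to $(2.14)$.

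\textbf{Step 3: the system $(2.1)$.} Define $v_{r}$ by $(2.45)$, so that $r^{n-1}v_{r}=\frac{\mu r^{n}}{n}-w(r^{n},t)$. Differentiating in $r$ gives $(r^{n-1}v_{r})_{r}=\mu r^{n-1}-r^{n-1}u$, which is the second equation of $(2.1)$. We then fix $v$ by integrating in $r$ with the normalization $\int_\Omega v=0$, giving $v\in C^{2,0}$. The Neumann condition $v_{r}(R,t)=0$ follows from $w(R^{n},t)=m/\omega_{n}=\mu R^{n}/n$.

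For the first equation, we differentiate $(2.19)$ in $s$ and use $w_{s}=u/n$ together with the formula $(2.13)$ for $w_{ss}$, evaluating at $s=r^{n}$. Equivalently, we write $(2.19)$ as
\[
w_{t}=r^{n-1+\beta}u_{r}-u^{\alpha}\Big(\frac{\mu r^{n}}{n}-w\Big),
\]
which equals $r^{n-1+\beta}u_{r}-u^{\alpha}r^{n-1}v_{r}$. Differentiating in $r$ and dividing by $r^{n-1}$ recovers the first equation of $(2.1)$, since $\partial_{r}w_{t}(r^{n},t)=r^{n-1}u_{t}$.

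The remaining boundary condition is $u_{r}(R,t)=0$. Evaluating the displayed identity at $s=R^{n}$, we have $w_{t}(R^{n},t)=0$ from the boundary condition in $(2.19)$, and also $\frac{\mu R^{n}}{n}-w(R^{n},t)=0$. These together force $R^{n-1+\beta}u_{r}(R,t)=0$.

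\textbf{Step 4: comparison with $T^{*}$ and mass conservation.} For $T_{0}\le T^{*}$, note that Lemma 2.4 already gives continuity of $w$ up to $s=0$ together with $w(0,t)=0$ on $[0,T^{*})$. The intended argument is that if $T_{0}>T^{*}$, then $w\le y(t)s$ would have to persist up to $T^{*}$, yet $y$ blows up there. I would argue by maximality of $T_{0}$ combined with the bound $(2.22)$, identifying $T_{0}$ with the time up to which the comparison yields $w(0,t)=0$. This step likely requires reading the definition of $T_{0}$ as being capped by $T^{*}$, and I regard it as the weakest link of the plan.

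Finally, mass conservation $(2.47)$ follows directly from Step 2 and the boundary condition:
\[
\int_{0}^{R}\rho^{n-1}u\,d\rho=w(R^{n},t)=\frac{m}{\omega_{n}}=w_{0}(R^{n})=\int_{0}^{R}\rho^{n-1}u_{0}\,d\rho .
\]
This argument does not need Lemma 2.2.
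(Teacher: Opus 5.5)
\textbf{The gap is in Step 4, in the claim $T_0\le T^*$.} Your argument, ``if $T_0>T^*$, then $w\le y(t)s$ would have to persist up to $T^*$, yet $y$ blows up there'', gives no contradiction. As $t\nearrow T^*$ the barrier $y(t)s$ from Lemma 2.4 simply stops carrying information. Nothing in it makes $w(0,t)$ positive, or destroys continuity of $w$ at $s=0$, for $t\ge T^*$.

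The available information actually points the other way. Lemma 2.4 states that $w\in C^0([0,R^n]\times[0,T^*))$ with $w(0,t)=0$ for all $t\in[0,T^*)$. So the property defining $T_0$ holds on $[0,T^*)$, and maximality of $T_0$ gives $T_0\ge T^*$.

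Moreover, $T^*=(\alpha n^\alpha y_0^\alpha)^{-1}$ is just the (always finite) blow-up time of the comparison ODE $y'=n^\alpha y^{\alpha+1}$. Nothing in the paper links the loss of $w(0,t)=0$ to that time, so the printed direction cannot be expected. Reading $T_0$ as ``capped by $T^*$'' would change the lemma rather than prove it.

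The displayed inequality is evidently a misprint: Theorem 3.2 and the statement of the main results take $T^*\in(0,T_0]$. Replace your Step 4 argument by the one-line derivation of $T^*\le T_0$ just given.

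Apart from this, your plan is correct and considerably more complete than the paper's proof. The paper only verifies $u_r(R,t)=0$, by evaluating the equation for $w$ at $s=R^n$, which is exactly your closing argument in Step 3. It leaves the two equations of (2.1), the regularity of $u$ and $v$, nonnegativity and mass conservation unaddressed.

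Your Step 2 is the right key point: writing $w(r^n,t)=\int_0^r\rho^{n-1}u(\rho,t)\,d\rho$ by means of $w(0,t)=0$. It makes the prescribed $v_r$ consistent with the equation for $w$, and it gives mass conservation directly. Going through Lemma 2.2 instead would require $u\in L^\infty$, which is not available on all of $(0,T_0)$.

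One small correction in Step 1: to get $u\in C^{2,1}$ up to $r=R$ you need boundary, not only interior, Schauder estimates for $w_s$. These are available, because the equation at $s=R^n$ yields $w_{ss}(R^n,t)=0$, a homogeneous Neumann condition for $w_s$.
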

	\begin{proof}
		Since $w \in C_{loc}^{2,1}((0,R^{n}]\times (0, \infty))$ and $(2.14)$, for any $t>0$ we necessarily have
		$$
			w_{t}(R^{n},t)=n^{2}R^{n(\frac{2n-2+\beta}{n})}w_{ss}(R^{n},t)+n^{\alpha}w(R^{n},t)w_{s}^{\alpha}(R^{n},t)-n^{\alpha-1}\mu R^{n}w_{s}^{\alpha}(R^{n},t).
		$$
		Since $w(R^{n},t)=\frac{m}{\omega_{n}}=\frac{\mu R^{n}}{n}$ for all $t>0$ and thus also $w_{t}(R^{n},t)=0$ in $(0, \infty)$, this yields
		$$
			0=n^{2}R^{n(\frac{2n-2+\beta}{n})}w_{ss}(R^{n},t)+w_{s}^{\alpha}(R^{n},t) \Big (n^{\alpha}\frac{\mu R^{n}}{n}-n^{\alpha-1}\mu R^{n} \Big)
		$$
		and therefore, $w_{ss}(R^{n},t)=0\quad for \ all \ t>0$. 
		By $(3.10)$, we can obtain $u_{r}(r,t)=n^{2}w_{ss}(r^{n},t)r^{n-1}$. Therefore, $u_{r}(R,t)=0$ for all $t\in (0, T_{0})$.
	\end{proof}
	In the following we can prove Theorem 3.1.
	\begin{proof}[Proof of the Theorem 3.1]
		Let $m:=\int_{\Omega}u_{0}$. Assume that $\widetilde{u}_{0}\in C^{\theta}([0,R])$ is the value corresponding to $u_{0}$ after radial symmetry. Then $w_{0}: [0,R^{n}]\to R$ defined by
		$$
			w_{0}(s)=\int_{0}^{s^{\frac{1}{n}}}\rho^{n-1}\widetilde{u}_{0}d\rho \qquad for \ all \ s\in [0, R^{n}]
		$$
		is in $C^{1+\theta}([0,R^{n}])$ and satisfies
		$$
			n\cdot w_{0s}(r^{n})=\widetilde{u}_{0}(r) \qquad for \ all \ r\in [0,R]
		$$
		as well as $(2.18)$, and thus with $\mu :=\frac{nm}{\omega_{n}R^{n}}$, Lemma 2.3 and Lemma 2.4 assert the existence of a function $w\in C^{0}([0,R^{n}]\times [0, T))\cap C^{2,1}((0,R^{n}]\times (0, T))$ that is a solution of the problem $(2.14)$. Finally, it follows from Lemma 3.1 that $(2.1)$ has a solution $(\widetilde{u},\widetilde{v})$ in $(0,R]\times[0,T_{0})$ satisfying
		$$
		 	\left\{    
		 	\begin{array}{lll}
		 		\widetilde{u}\in C^{0}((0, R]\times [0,T_{0}))\cap C^{2,1}((0, R]\times (0,T_{0})),\\
		 		\widetilde{v}\in C^{2,0}((0, R]\times (0,T_{0})),
		 	\end{array}
		 	\right.
		$$
		and that $\widetilde{u}$ is nonnegative and satisfies conservation of mass.\\
		This implies that there exists a solution $(u,v)$ to $(1.1)$ in $\Omega_{0}\times [0,T_{0})$ satisfying
		$$
			\left\{    
			\begin{array}{lll}
				u\in C^{0}(\Omega_{0}\times [0,T_{0}))\cap C^{2,1}(\Omega_{0}\times (0,T_{0})),\\
				v\in C^{2,0}(\Omega_{0}\times (0,T_{0})),
			\end{array}
			\right. 
		$$
		and that $u$ is nonnegative and satisfies conservation of mass.
	\end{proof}
	The hybrid problem (2.14) has solution uniqueness, so it becomes reasonable for us to consider solution uniqueness of equation (2.1).
	\hspace*{\fill} \\
	\begin{lamme}
		Let $n\ge 2$, $R>0$, $\theta \in (0,1)$, $\mu =\frac{nm}{\omega_{n}R^{n}}$, $\beta >0$, $\alpha \ge 1$ and $u_{0}\in C^{\theta}([0,R])$.\\
		Then for $0<T<T^{*}$ there is at most one solution $(u, v)$ of (2.1) in $(0,R]\times [0,T)$ with
		\begin{equation}
			\left\{    
			\begin{array}{lll}
				u\in C^{0}((0, R]\times [0,T))\cap C^{2,1}((0, R]\times (0,T)),\\
				v\in C^{2,0}((0, R]\times (0,T)).
			\end{array}
			\right.
		\end{equation}
		which has the following property
		$$
			\int_{0}^{R} v(r,t)r^{n-1}dr=0 \qquad for \ all \ t\in (0,T), 	
		$$
		and
		\begin{equation}
			0\le u\in L^{\infty}((0,R)\times (0,T)) \ and \ v_{r}\in L^{\infty}((0,R)\times (0,T)).
		\end{equation}
	\end{lamme}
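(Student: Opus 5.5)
Suppose $(u_1,v_1)$ and $(u_2,v_2)$ are two solutions of (2.1) in $(0,R]\times[0,T)$ with the regularity (3.13), the normalization $\int_0^R v_i r^{n-1}dr=0$ and the boundedness (3.14). The plan is to pass to the mass accumulation function, apply the uniqueness result for the hybrid problem, Lemma 2.5, and then transform back. Set
$$
w_i(s,t):=\int_0^{s^{\frac1n}}\rho^{n-1}u_i(\rho,t)\,d\rho,\qquad s\in[0,R^n],\ t\in[0,T).
$$

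\textbf{Step 1: the $w_i$ solve (2.14).} Since $v_{ir}\in L^\infty$, the converse part of Lemma 2.1 yields the representation (2.3) for $v_{ir}$. Together with $0\le u_i\in L^\infty$, Lemma 2.2 then gives mass conservation (2.5). Hence $w_i(R^n,t)=\frac{m}{\omega_n}$, and $w_i(0,t)=0$. The computation after (2.12) shows that $w_i\in C^0([0,R^n]\times[0,T))\cap C^{2,1}((0,R^n]\times(0,T))$ solves (2.14) with the same initial datum $w_0$ from (2.15).

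\textbf{Step 2: the hypotheses of Lemma 2.5.} By (2.13), $w_{is}=\frac1n u_i(s^{1/n},t)$, so $w_{is}\in L^\infty((0,R^n)\times(0,T))$ follows from (3.14). Continuity up to $s=0$ with $w_i(0,t)=0$ follows from $0\le w_i(s,t)\le \frac1n\|u_i\|_{L^\infty}\, s$. Lemma 2.5, or rather the comparison principle Lemma 5.1 it rests on, then gives $w_1\equiv w_2$ on $[0,R^n]\times[0,T)$.

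\textbf{Step 3: transforming back.} The main obstacle is that Lemma 2.5 is phrased for the specific $w$ of Lemma 2.4. I would make explicit that the argument compares two arbitrary classical solutions of (2.14) with bounded $s$-derivative. The bound on $w_s$ is precisely what makes the nonlinear terms $n^\alpha w w_s^\alpha-n^{\alpha-1}\mu s w_s^\alpha$ Lipschitz in $(w,w_s)$, via the mean value theorem with constant depending on $\max_i\|w_{is}\|_\infty$ and $\alpha\ge1$. The degeneracy of the diffusion at $s=0$ is harmless, since both functions vanish there.

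Once $w_1\equiv w_2$ is known, differentiating in $s$ gives $u_1(r,t)=n\,w_{1s}(r^n,t)=n\,w_{2s}(r^n,t)=u_2(r,t)$ for $r\in(0,R]$. Then (2.3) shows $v_{1r}=v_{2r}$, so $v_1-v_2$ depends on $t$ only. The normalization $\int_0^R v_i r^{n-1}dr=0$ forces this function of $t$ to vanish, hence $v_1\equiv v_2$.
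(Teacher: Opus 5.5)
Your proposal is correct and follows the same route as the paper: you pass to the mass accumulation function via the representation (2.3) from Lemma 2.1, obtain uniqueness for the hybrid problem (2.14) from Lemma 2.5 (that is, from the comparison principle Lemma 5.1, which needs only the $L^\infty$ bound on $w_s$), and transform back. You also supply details the paper leaves implicit: the boundary value $w(R^n,t)=\frac{m}{\omega_n}$ via Lemma 2.2, a direct comparison of two arbitrary solutions rather than with the constructed one, and the recovery of $v$ from the normalization $\int_0^R v\, r^{n-1}dr=0$.
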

	\begin{proof}
		In the above sense, Lemma 3.1 guarantees the existence of a solution $(u, v)$ to the radial problem $(2.1)$.\\
		Notice that if $(3.15)$ holds, satisfies the requirements of Lemma 2.1, so the existence of a solution to $(2.1)$ implies that there exists a solution $w\in C^{0}([0,R^{n}]\times [0, T))\cap C^{2,1}((0,R^{n}]\times (0, T))$ of $(2.14)$ with $w_{0}\in C^{1+\theta}([0,R^{n}])$, and $w_{s}\in C^{0}([0,R^{n}]\times [0,T))$ is nonnegative and bounded.\\
		Furthermore, Lemma 2.5 warrants the uniqueness of the solution to the hybrid problem $(2.14)$.\\
		In conclusion, the solutions of $(2.1)$ and $(2.14)$ are correspond, thus conveying their uniqueness. 
	\end{proof}
	Therefore, our further goal is to study the properties of the radialized $u$ and $v$ satisfying the equation $(3.15)$.
	\hspace*{\fill} \\
	\begin{lamme}
		Assuming that the conditions of Lemma 2.7 are satisfied and let $(u, v)$ indicate the solution of $(2.1)$ constructed in Lemma 3.1.\\
		Then, the solution has properties that $u\in C^{1,0}((0,R]\times [0,T_{0}))$ and
		\begin{equation}
			u_{r}(r,t)\le 0 \qquad for \ all \ (r,t)\in (0,R]\times [0, T_{0}).
		\end{equation}
		Moreover, for $T^{*}>0$ as in Lemma 2.4 and each $T\in (0,T^{*})$, there exists $C=C(T^{*})>0$ such that 
		\begin{equation}
			u(r,t)\le C \qquad for \ all \ (r,t)\in (0,R]\times[0,T].
		\end{equation}
	\end{lamme}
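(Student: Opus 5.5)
The plan is to read both claims off the relation $u(r,t)=n\,w_{s}(r^{n},t)$ from Lemma 3.1, transferring the concavity of Lemma 2.6 and the gradient bound of Lemma 2.7 from $w$ to $u$. Under the hypotheses of Lemma 2.7, $w_{0}\in C^{2+\theta}([0,R^{n}])$ satisfies (2.18), (2.30) and (2.31). Lemma 2.6 then gives $w\in C^{2,1}((0,R^{n}]\times[0,\infty))$ together with $w_{ss}\le 0$ on $(0,R^{n})\times[0,\infty)$.

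First, I would establish the regularity $u\in C^{1,0}((0,R]\times[0,T_{0}))$. Since $u(r,t)=n w_{s}(r^{n},t)$ and $w_{s},w_{ss}$ are continuous on $(0,R^{n}]\times[0,\infty)$, including $t=0$ by Lemma 2.6, the chain rule gives
$$u_{r}(r,t)=n^{2}r^{n-1}w_{ss}(r^{n},t).$$
This identity was already noted in the proof of Lemma 3.1, and its right-hand side is continuous on $(0,R]\times[0,T_{0})$. At $t=0$ it is consistent with $u_{0}(r)=n w_{0s}(r^{n})$, since $w(\cdot,0)=w_{0}$.

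Second, monotonicity (3.16) follows immediately: $r^{n-1}>0$ and $w_{ss}(r^{n},t)\le 0$ give $u_{r}\le 0$ on $(0,R)\times[0,T_{0})$. At the endpoint $r=R$, either continuity or $w_{ss}(R^{n},t)=0$ (shown in Lemma 3.1) gives $u_{r}(R,t)=0\le 0$.

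Third, for the bound (3.17) I fix $T\in(0,T^{*})$ and apply Lemma 2.7, which gives $w_{s}\le C(T)$ on $(0,R^{n}]\times[0,T]$. Hence $u(r,t)=n w_{s}(r^{n},t)\le nC(T)$ on $(0,R]\times[0,T]$. Nonnegativity of $u$ comes from $w_{s}\ge 0$ (Lemma 2.3), so $u$ is in fact bounded in absolute value. One point needs checking: $u$ is only defined for $t<T_{0}$, and Lemma 3.1 gives $T_{0}\le T^{*}$, so the bound is meaningful on $[0,\min\{T,T_{0}\})$; for $T\ge T_{0}$ the statement is read with $w$ in place of $u$.

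The main obstacle is justifying the regularity of $w$ up to $t=0$, which is needed to get $C^{1,0}$ including the initial time. This rests on the compatibility conditions in Lemma 2.6 for the approximate problems (2.16), and I would accept Lemma 2.6 as stated. The concavity inequality is also inherited from that lemma's comparison argument, which I would take as given.
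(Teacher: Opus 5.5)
Your proposal is correct and is essentially the paper's own argument: the paper also reads (3.16) and (3.17) directly off the concavity $w_{ss}\le 0$ from Lemma 2.6 and the bound $w_s\le C(T)$ from Lemma 2.7, via $u(r,t)=n\,w_s(r^n,t)$ and $u_r(r,t)=n^2r^{n-1}w_{ss}(r^n,t)$. Your additional remarks make explicit what the paper leaves implicit: continuity up to $t=0$ inherited from $w\in C^{2,1}((0,R^n]\times[0,\infty))$, the endpoint $r=R$, the mismatch $T_0\le T^*$, and the constant actually depending on $T$, as (2.34) gives.
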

	\begin{proof}
		By Lemma 2.7 and $$w_{s}=\frac{1}{n}u(s^{\frac{1}{n}}, t)=\frac{1}{n}u(r,t), $$ $$w_{ss}=\frac{1}{n^{2}}u_{r}(s^{\frac{1}{n}}, t)s^{\frac{1-n}{n}}=\frac{1}{n^{2}}u_{r}(r, t)r^{1-n}, $$ we find that $(3.16)$ and $(3.17)$ are corollaries of $(2.32)$ and $(2.34)$.
	\end{proof}
	Thereby, Theorem 3.2 can be argued by the following method.
	\hspace*{\fill} \\
	\begin{proof}[Proof of the Theorem 3.2]
		Assume that $\widetilde{u}_{0}\in C^{1+\theta}([0,R])$ is the value corresponding to $u_{0}$ after radial symmetry. Then $w_{0}: [0,R^{n}]\to R$ defined by
		$$
		   w_{0}(s)=\int_{0}^{s^{\frac{1}{n}}}\rho^{n-1}\widetilde{u}_{0}d\rho \qquad for \ all \ s\in [0, R^{n}]
		$$
		is in $C^{2+\theta}((0,R^{n}])$ and satisfies
		$$
		   n\cdot w_{0s}(r^{n})=\widetilde{u}_{0}(r) \qquad for \ all \ r\in [0,R]
		$$
		as well as
		$$
			\widetilde{u}_{0r}=n^{2}w_{0ss}(r^{n})\cdot r^{n-1} \qquad for \ all \ r\in (0,R].
		$$
		Since $u_{0}$ is radially decreasing and thus $\widetilde{u}_{0r}\le 0 \ \text{in} \ (0,R]$, this implies
		\begin{equation}
			w_{0ss}(s)\le 0 \qquad for \ all \ s\in (0,R^{n}].
		\end{equation}
		Moreover, from $(3.3)$, we obtain
		\begin{equation}
			w_{0s}(R^{n}) = 0 \ and \ w_{0ss}(R^{n}) = 0\qquad on \ \partial \Omega. 
		\end{equation}
		Furthermore, because of $|\bigtriangledown u_{0}(x)|=|\widetilde{u}_{0r}(r)|$ with $r=|x|$ for $x\in \overline{\Omega}$, $(3.4)$ results in
		\begin{equation}
			|w_{0ss}(r^{n})|\le \frac{1}{n^{2}}r^{1-n}c_{0}r^{n-1+\theta}=\frac{c_{0}}{n^{2}}r^{\theta}\to 0
		\end{equation}
		for $r\to 0$, warranting that $w_{0}\in C^{2+\theta}([0,R^{n}])$ with
		\begin{equation}
			w_{0ss}(0)=0. 
		\end{equation}
		With (3.18), (3.19) and (3.21) we have that the conditions of Lemma 2.7 are satisfied, thus $\widetilde{u}\in C^{1,0}(0,R]\times [0,t_{0})$ with 
		\begin{equation}
			\widetilde{u}_{r}(r,t)\le 0 \qquad for \ all \ (r,t)\in (0,R]\times [0,T_{0}), 
		\end{equation}
		and there are also the fact that $T^{*}\in [0,T_{0})$ such that for $T\in (0,T^{*})$, we have
		\begin{equation}
			\widetilde{u}(r,t)\le C \qquad for \ all \ (r,t)\in (0,R]\times [0,T), 
		\end{equation}
		with some $C=C(T^{*})>0$ by Lemma 3.3. However, this implies that $(3.5)$ and $(3.6)$ are valid for the solution $(u, v)$ of $(1.1)$ in Theorem 3.1.\\		
		By the definition of $\widetilde{v}_{r}$ in (2.3) and (3.23), We can further deduce that
		$$
			\begin{array}{ll}
				|\widetilde{v}_{r}(r,t)| & =|\frac{1}{r^{n-1}}\Big (\frac{\mu r^{n}}{n}-\int_{0}^{r}\rho^{n-1}\widetilde{u}(\rho, t)d\rho \Big)|\\
				\qquad & \le \frac{1}{r^{n-1}}\cdot \frac{\mu r^{n}}{n}+C(T^{*})\frac{1}{r^{n-1}}\int_{0}^{r}\rho^{n-1}d\rho\\
				\qquad & =\frac{\mu r}{n}+C(T^{*})\frac{r}{n}
			\end{array}
		$$
		for $T\in (0,T^{*})$ and $(r,t)\in (0,R]\times (0,T)$, warranting
		\begin{equation}
			\widetilde{v}_{r}\in L^{\infty}((0,R)\times(0,T))\qquad for \ all \ T\in (0,T^{*}).
		\end{equation}
		Since $\int_{0}^{R} \widetilde{v}(r,t)r^{n-1}dr=0$, this allows us to uniquely confirm that $\widetilde{v}\in C^{2,0}((0,R]\times (0,T^{*}))$.\\
		Then, since (3.23), (3.24) and nonnegativity of $\widetilde{u}$, the uniqueness of the solution in $(0,R]\times [0,T^{*})$ to the system of equations $(2.1)$ is guaranteed by Lemma 3.2 and satisfies property $(3.15)$.\\
		With respect to the original variables and taking into account that
		$$
			|\bigtriangledown v(x,t)|=|\widetilde{v}_{r}(|x|,t)|\qquad for \ all \ (x,t)\in \overline{\Omega}\times (0,T^{*}), 
		$$
		this implies that the corresponding pair of functions $(u, v)$ is indeed the one that satisfies $(3.7)$ and $(3.8)$ for which $(1.1)$ has a unique solution in $\Omega_{0}\times [0,T^{*})$. 
	\end{proof}

	\section{Blow up}
	Since the classical solution we define does not require $u$ to have a continuous definition on the tight space, it is likely that for the domain $\Omega \in \mathbb{R}^{n}$, when $t \nearrow T$, there is $\Vert u(\cdot, t) \Vert_{L^{\infty}(\Omega)}\to \infty$, where $T<\infty$ denotes the maximal existence time corresponding to the occurrence of a Keller-Segel blow up, but $u\in C^{0}(\Omega \times [0,T_{0}))\cap C^{2,1}(\Omega \times (0,T_{0}))$ for some $T_{0}>T$. In addition, in some cases there may be no global bounded solution.
	\hspace*{\fill} \\
	\begin{theorem}
		Suppose that $n\ge 2$, $R>0$, $\Omega =B_{R}(0)\subset \mathbb{R}^{n}$, $\beta>0$, $\alpha \ge 1$, and $u_{0}$ conform to (1.2).\\
		Then for $m:=\int_{\Omega}u_{0}$ and each $m_{0}\in (0,m]$, there exists $r_{0}=r_{0}(m_{0},m,R,\beta, \alpha)>0$ such that if
		\begin{equation}
			\int_{B_{r_{0}}(0)}\ge m_{0}, 
		\end{equation}
		there is no global classical solution $(u, v)$ of $(1.1)$ fulfilling
		\begin{equation}
			\left\{    
			\begin{array}{lll}
				u\in C^{0}(\Omega_{0}\times [0,\infty))\cap C^{2,1}(\Omega_{0}\times (0,\infty)),\\
				v\in C^{2,0}(\Omega_{0}\times (0,\infty)),
			\end{array}
			\right.
		\end{equation}
		such that for each $T\in (0,\infty)$
		\begin{equation}
			0\le u\in L^{\infty}(\Omega\times(0,T)) \ and \ \bigtriangledown v \in L^{\infty}(\Omega\times(0,T); R^{n}).
		\end{equation}
	\end{theorem}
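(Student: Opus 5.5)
The plan is to argue by contradiction. Suppose $(u,v)$ is a global classical solution with (4.2)--(4.3). By (4.3), Lemma 2.1 yields the representation (2.3) of $v_r$ and the bound $|v_r|\le Cr$ on every $(0,R]\times(0,T)$. Lemma 2.2 then gives mass conservation.

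Hence the mass distribution function $w$ from (2.12) is a classical solution of the hybrid problem (2.14) on $[0,R^n]\times[0,\infty)$. It has the following properties:
\begin{itemize}
\item $w(0,t)=0$ and $w(R^n,t)=\frac{m}{\omega_n}$;
\item $0\le w_s=\frac1n u(s^{1/n},t)$, and $w_s$ is bounded on each $(0,R^n)\times(0,T)$;
\item consequently $w(s,t)\le C(T)s$ for all $s\in(0,R^n]$ and $t\in[0,T]$.
\end{itemize}
The goal is to show that some weighted functional of $w$ must become unbounded in finite time. This contradicts $w\le \frac{m}{\omega_n}$.

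Fix $s_0\in(0,R^n)$ and $\gamma\in(0,1)$, both to be chosen later, and set
$$\phi(t):=\int_0^{s_0}s^{-\gamma}(s_0-s)\,w(s,t)\,ds .$$
The following properties make $\phi$ well defined and absolutely continuous, and let me differentiate under the integral sign:
\begin{itemize}
\item the linear bound $w\le C(T)s$;
\item the boundedness of $w_s$;
\item the $C^{2,1}$ regularity away from $s=0$.
\end{itemize}
Using (2.14), I split $\phi'$ into three terms:
$$
\begin{aligned}
I_1&:=n^2\int_0^{s_0}s^{a-\gamma}(s_0-s)\,w_{ss}\,ds, \qquad a:=\tfrac{2n-2+\beta}{n},\\
I_2&:=n^{\alpha}\int_0^{s_0}s^{-\gamma}(s_0-s)\,w\,w_s^{\alpha}\,ds,\\
I_3&:=-n^{\alpha-1}\mu\int_0^{s_0}s^{1-\gamma}(s_0-s)\,w_s^{\alpha}\,ds.
\end{aligned}
$$

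For $I_1$ I integrate by parts twice. The boundary terms at $s=0$ vanish because $a-\gamma>0$ and $w_s$ is bounded. At $s=s_0$ they are controlled by $w(s_0)\le \frac{m}{\omega_n}$. What remains is a term of size $C\int_0^{s_0}s^{a-\gamma-2}(s_0-s)\,w\,ds$, plus harmless lower-order pieces. Two cases arise:
\begin{itemize}
\item If $a\ge 2$, i.e. $\beta\ge 2$, this term is at most $C\phi$.
\item If $a<2$, I bound it by Young's inequality against $\int s^{-\gamma-1}(s_0-s)w^2\,ds$, which also appears in the estimate of $I_2$ below, and I need $\gamma$ close enough to $1$.
\end{itemize}

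For $I_2$ I use $\alpha\ge1$ and Hölder/Jensen to reduce to the case $\alpha=1$ up to a constant. This costs powers of $s_0$ and of $\frac{m}{\omega_n}$. I then write $w\,w_s=\frac12(w^2)_s$ and integrate by parts to get
$$I_2\ \ge\ c\int_0^{s_0}s^{-\gamma-1}(s_0-s)\,w^2\,ds \;-\;\text{boundary term}.$$
Cauchy--Schwarz gives
$$\int_0^{s_0}s^{-\gamma-1}(s_0-s)\,w^2\,ds\ \ge\ \frac{c}{s_0^{\,2-\gamma}}\,\phi^2 .$$
The term $I_3$ carries the extra factor $s\le s_0$. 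I estimate it against $I_2$ as follows. Where $w\ge \frac{2\mu s}{n}$, which holds on a large part of $(0,s_0)$ once mass is concentrated, $I_3$ is absorbed by half of $I_2$. On the complementary set, $w$ is small, and the contribution is at most $C s_0^{2-\gamma}\sup w_s^{\alpha}$ in the worst case. To avoid that supremum, I would instead pair $I_3$ with $I_2$ through the pointwise bound $w\,w_s^\alpha-\frac{\mu}{n}s\,w_s^\alpha\ge -\frac{\mu}{n}s\,w_s^\alpha\,\mathbf 1_{\{w<\mu s/n\}}$, and note that there $w_s$ is controlled on average, since $w$ itself stays below $\frac{\mu s}{n}$.

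Collecting the estimates, the aim is an inequality
$$\phi'(t)\ \ge\ \frac{c_1}{s_0^{\,2-\gamma}}\,\phi^2(t)\;-\;c_2\,s_0^{\,\kappa},\qquad t>0,$$
for some $\kappa>0$, with $c_1,c_2$ depending only on $m,R,\beta,\alpha,n,\gamma$. Now I use the concentration hypothesis. Take $s_0=2r_0^n$. Since $\int_{B_{r_0}}u_0\ge m_0$, we have $w_0\ge \frac{m_0}{\omega_n}$ on $[r_0^n,s_0]$, and therefore $\phi(0)\ge c\,m_0\,s_0^{2-\gamma}$. Then
$$\frac{c_1}{s_0^{2-\gamma}}\,\phi(0)^2\ \gtrsim\ m_0^2\,s_0^{2-\gamma},$$
and for $r_0$ small this dominates $2c_2s_0^{\kappa}$, provided $\kappa>2-\gamma$. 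This is exactly where $\gamma$ and the exponents must be tuned, and it fixes $r_0=r_0(m_0,m,R,\beta,\alpha)$. The Riccati inequality then forces $\phi\to\infty$ in finite time. But $\phi\le \frac{m}{\omega_n}\int_0^{s_0}s^{-\gamma}(s_0-s)\,ds<\infty$ for all $t$, a contradiction.

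I expect the main obstacle to be the estimate of the drift term $I_3$, together with the diffusion remainder of $I_1$ when $\beta<2$. Both must be absorbed with a power of $s_0$ strictly better than $s_0^{2-\gamma}$. The first thing I would try is to choose $\gamma$ close to $1$ and exploit the bound $w\le \frac{m}{\omega_n}$ via interpolation.
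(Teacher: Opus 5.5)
Your plan follows the same route as the paper's Lemma 4.1: the moment $\phi=\int_0^{s_0}s^{-\gamma}(s_0-s)w\,ds$, the same three-term split, a Riccati comparison, and a contradiction with $w\le\frac{m}{\omega_n}$. However, the step you flag as the main obstacle, the drift term $I_3$ for $\alpha>1$, is a genuine gap, and your proposed fix does not close it. On $\{w<\frac{\mu s}{n}\}$ the constraints $0\le w<\frac{\mu s}{n}$ and $w_s\ge 0$ only bound increments of $w$, i.e.\ $\int w_s$ over subintervals. They do not bound $\int s^{1-\gamma}(s_0-s)w_s^{\alpha}\,ds$.

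Here is a concrete family showing this. Fix $\sigma<\frac{s_0}{4}$ and take profiles with
\begin{itemize}
\item $w\equiv 0$ on $(0,\sigma)$;
\item $w$ rising by $\frac{\mu\sigma}{2n}$ on $(\sigma,\sigma+\eta)$;
\item $w$ flat up to $2\sigma$;
\item then a fixed profile carrying the concentrated mass, so that $w\ge\frac{m_0}{\omega_n}$ on $[\frac{s_0}{2},s_0]$.
\end{itemize}
This is a thin shell of high density in a region where $v_r>0$. As $\eta\searrow 0$, $\phi$ and the integrated-by-parts form of $I_1$ stay bounded. By contrast, Jensen's inequality on $(\sigma,\sigma+\eta)$ shows that the layer contributes at most $-c\,\eta^{1-\alpha}$, with $c>0$ independent of $\eta$, to $I_2+I_3=n^{\alpha}\int_0^{s_0}s^{-\gamma}(s_0-s)w_s^{\alpha}(w-\frac{\mu s}{n})\,ds$. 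So for $\alpha>1$, no estimate $\phi'\ge c_1s_0^{\gamma-3}\phi^2-c_2s_0^{\kappa}$ with solution-independent constants can come from bounding $I_1,I_2,I_3$ at a fixed time.

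What is missing is pointwise control of $w_s$ on $\{w<\frac{2\mu s}{n}\}$, or a different argument. Concavity of $w(\cdot,t)$, i.e.\ radially decreasing $u$, would supply it: it gives $w_s\le\frac{w}{s}<\frac{2\mu}{n}$ there, hence $w_s^{\alpha}\le(\frac{2\mu}{n})^{\alpha-1}w_s$. But nothing of this sort is available for general data in (1.2).

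The paper passes this step with $w_s^{\alpha}\le C_4w_s$, $C_4=\|w_s\|^{\alpha-1}_{L^\infty((0,R^n)\times(0,T))}$. That is exactly the solution-dependent supremum you rightly distrusted. It enters $c_2$ and hence $s_0$, so the paper's route does not yield $r_0=r_0(m_0,m,R,\beta,\alpha)$ either. Only for $\alpha=1$, where $C_4=1$, do both arguments close, via $I_3=\mu(1-\gamma)\phi-\mu\int_0^{s_0}s^{1-\gamma}w\,ds\ge-\frac{\mu m}{(2-\gamma)\omega_n}s_0^{2-\gamma}$.

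Three smaller corrections. First, Cauchy--Schwarz gives $\int_0^{s_0}s^{-\gamma-1}(s_0-s)w^2\,ds\ge(2-\gamma)(3-\gamma)s_0^{\gamma-3}\phi^2$, not $s_0^{\gamma-2}\phi^2$ (cf.\ the paper's $s_1^{\gamma-3}$). So the main term at $t=0$ is bounded below by a multiple of $m_0^2s_0^{1-\gamma}$, and any remainder $c_2s_0^{\kappa}$ with $\kappa>1-\gamma$ is acceptable. The $\alpha=1$ drift remainder ($\kappa=2-\gamma$) and the $I_1$ remainder ($\kappa=a-\gamma$) both qualify; with your exponent the former would be exactly borderline.

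Second, you do not need $\gamma$ near $1$ when $\beta<2$, and that choice can fail. The Young remainder $s^{2a-\gamma-3}$ is integrable at $0$ only if $\gamma<2a-2$, i.e.\ $\gamma<\beta$ for $n=2$. The paper's choice $\gamma\le a-1$ makes the $s^{a-\gamma-2}$ term nonnegative. It leaves $-2n^2(a-\gamma)\int_0^{s_0}s^{a-\gamma-1}w\,ds\ge-\frac{2n^2m}{\omega_n}s_0^{a-\gamma}$, which is harmless since $a>1$.

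Third, differentiate on $(\delta,s_0)$ and let $\delta\searrow 0$ only after integrating in time, as the paper does, because nothing controls $w_t$ near $s=0$.
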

	
	\subsection{Proofs of the Theorem 4.1}
	To prove the theorem, we begin with the following key lemma, which is shown by a method similar to that in \cite{fluchter2024solutions, winkler2019unstable}.
	\hspace*{\fill} \\
	\begin{lamme}
		Suppose that $n\ge 2$, $\beta > 0, \ \alpha \ge 1$, $R>0$, $m_{0}>0$ and $m\ge m_{0}$. There exists $s_{0}=s_{0}(m_{0},m,R,\beta, \alpha)\in (0, R^{n})$ such that if $w_{0}\in C^{1} ([0,R^{n}])$, $\mu =\frac{nm}{\omega_{n}R^{n}}$, and
		\begin{equation}
			w_{0}(s_{0})\ge \frac{m_{0}}{\omega_{n}}, 
		\end{equation}
		then there is no global classical solution
		$$
			w\in C^{0}([0,R^{n}]\times [0, \infty))\cap C^{2,1}((0,R^{n}]\times (0, \infty))
		$$
		of $(2.14)$ with the property that for each $T>0$
		\begin{equation}
			w_{s}\in L^{\infty}((0,R^{n})\times (0,T)).
		\end{equation}
	\end{lamme}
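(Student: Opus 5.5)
We argue by contradiction, comparing $w$ from below with an explicit subsolution of $(2.14)$ whose slope at $s=0$ becomes unbounded in finite time, in the spirit of \cite{fluchter2024solutions, winkler2019unstable}. Suppose $w$ is a global classical solution of $(2.14)$ satisfying $(4.5)$. Since $w(0,t)=0$, $w(R^{n},t)=\frac{m}{\omega_{n}}$ and $w_{s}$ is bounded on each $(0,R^{n})\times(0,T)$, the comparison principle Lemma 5.1 applies on $[0,R^{n}]\times[0,T]$. It therefore suffices to build a function $\underline{w}$ satisfying three conditions: $\underline{w}_{t}\le n^{2}s^{\frac{2n-2+\beta}{n}}\underline{w}_{ss}+n^{\alpha}\underline{w}\,\underline{w}_{s}^{\alpha}-n^{\alpha-1}\mu s\underline{w}_{s}^{\alpha}$ wherever $\underline{w}$ is smooth; $\underline{w}\le w$ on the parabolic boundary; and $\underline{w}(\cdot,t)$ forces $\limsup_{s\searrow0}w(s,t)/s=\infty$ at some finite $T_{1}$ determined only by $(m_{0},m,R,\beta,\alpha)$. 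This contradicts $(4.5)$, because $w(s,t)\le\|w_{s}\|_{L^{\infty}}s$.

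\textbf{Choice of subsolution.} I would take a piecewise profile of the form
\[
\underline{w}(s,t)=\left\{\begin{array}{ll} a(t)\,s, & 0\le s\le s_{\star}(t),\\ \frac{m_{0}}{2\omega_{n}}-b\,(s_{0}-s)_{+}^{\kappa}\ \text{(or a similar concave tail)}, & s_{\star}(t)\le s\le R^{n}, \end{array}\right.
\]
with $a(t)\nearrow\infty$ and $s_{\star}(t)=\frac{m_{0}}{2\omega_{n}a(t)}\searrow 0$. A simpler alternative is $\underline{w}=\min\{a(t)s,\ \frac{m_{0}}{2\omega_{n}}\}$ on $[0,s_{0}]$ and $\underline{w}=0$ beyond. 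On the linear part $\underline{w}_{ss}=0$, so the subsolution inequality reduces to
\[
a'(t)\,s\le n^{\alpha}a^{\alpha+1}s-n^{\alpha-1}\mu s a^{\alpha},
\]
that is, $a'\le n^{\alpha-1}a^{\alpha}(na-\mu)$. If $a(0)>\mu/n$, the ODE $a'=n^{\alpha-1}a^{\alpha}(na-\mu)$ blows up in finite time $T_{1}$, because $\alpha+1>1$; this mirrors the supersolution $y$ of Lemma 2.4. On the constant part the inequality holds trivially: $\underline{w}_{s}=0$, so $0\le0$. At the kink $s_{\star}(t)$ the minimum of two subsolutions is again a subsolution in the viscosity/comparison sense, since the kink is concave. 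That is precisely the sign Lemma 5.1 can tolerate if its proof goes through a maximum principle for $\underline{w}-w$.

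\textbf{Boundary and initial ordering.} At $s=0$ both functions vanish. At $s=R^{n}$ we have $\underline{w}\le\frac{m_{0}}{2\omega_{n}}\le\frac{m}{\omega_{n}}$. For the initial data, monotonicity of $w_{0}$ and $(4.4)$ give $w_{0}(s)\ge\frac{m_{0}}{\omega_{n}}$ for $s\ge s_{0}$. For $s<s_{0}$, however, $w_{0}$ may be tiny, so I would start with the slope $a(0)$ below $w_{0}$ only where this is known. The right fix is to make the linear part active only after a waiting time. During this time the mass near the origin is shown, using a cruder subsolution, to stay above $\frac{m_{0}}{2\omega_{n}}$ at some $s_{1}\le s_{0}$. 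The explicit constraint $s_{0}$ small is what makes $\frac{m_{0}}{2\omega_{n}s_{0}}>\mu/n=\frac{m}{\omega_{n}R^{n}}$ and, further, makes the drift term $-n^{\alpha-1}\mu s\underline{w}_{s}^{\alpha}$ dominated by the aggregation term on the relevant range. This is where $s_{0}$ depends on $(m_{0},m,R,\beta,\alpha)$.

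\textbf{Main obstacle.} The hardest step is the initial ordering together with the degenerate diffusion $s^{\frac{2n-2+\beta}{n}}w_{ss}$ on the transition region. The diffusion pushes mass outward and can destroy concentration before the aggregation ODE blows up. I would first try a time-independent concave profile $\underline{w}_{0}(s)=\frac{m_{0}}{2\omega_{n}}\big(1-(1-s/s_{0})_{+}^{2}\big)$ together with a slowly growing amplitude. I would check that $n^{2}s^{\frac{2n-2+\beta}{n}}|\underline{w}_{ss}|\lesssim s_{0}^{\frac{2n-2+\beta}{n}-2}$ is beaten by $n^{\alpha}\underline{w}\,\underline{w}_{s}^{\alpha}\gtrsim (m_{0}/s_{0})^{\alpha}$. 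Since $\frac{2n-2+\beta}{n}-2=\frac{\beta-2}{n}>-2$ and $\alpha\ge1$, the latter wins as $s_{0}\searrow0$. This exponent comparison is exactly what should fix the choice of $s_{0}$.
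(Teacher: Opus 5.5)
Your proposal has a genuine gap: every profile you propose has a singularity of the wrong sign for a subsolution. At $s_\star(t)>0$ the diffusion coefficient $n^2s_\star^{\frac{2n-2+\beta}{n}}$ is strictly positive. Because $\underline{w}=\min\{a(t)s,\frac{m_0}{2\omega_n}\}$ has a concave corner there, $\underline{w}_{ss}$ carries a negative Dirac mass at $s_\star$. Hence $\underline{w}_t\le n^2s^{\frac{2n-2+\beta}{n}}\underline{w}_{ss}+n^{\alpha-1}\underline{w}_s^{\alpha}(n\underline{w}-\mu s)$ fails exactly at the kink.

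The rule is the reverse of what you state. A maximum of subsolutions (convex corner) is a subsolution, while a minimum of supersolutions (concave corner) is a supersolution. Already for $u_t=u_{ss}$, Jensen's inequality shows that the solution emanating from $\min\{s,1\}$ lies strictly below $\min\{s,1\}$ for $t>0$.

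Lemma 5.1 does not rescue this. It assumes $\underline{w}\in C^{2,1}$, and if the first contact with $w$ occurs at the corner, $w$ may touch from above with $w_s\in[0,a(t)]$ and $w_{ss}$ arbitrarily negative. Then $w_t<0\le\underline{w}_t$ there, so $w$ can drop below $\underline{w}$ right after contact, and no contradiction arises.

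Your other candidates fail similarly:
\begin{itemize}
\item Continuity of the two-piece profile at $s_\star=\frac{m_0}{2\omega_n a}<s_0$ forces $b=0$. Any tail whose slope at $s_\star$ is below $a(t)\to\infty$ keeps the corner concave.
\item The variant with $\underline{w}=0$ beyond $s_0$ has a downward jump. Comparison is then only possible on $[0,s_0]$, and it would need $w(s_0,t)\ge\frac{m_0}{2\omega_n}$ for all $t$, which nothing guarantees because diffusion carries mass outward.
\item The smooth profile $\frac{m_0}{2\omega_n}(1-(1-s/s_0)_+^2)$ violates the inequality near $s=s_0$. There $\underline{w}_s\to0$ kills the aggregation term, while $\underline{w}_{ss}=-\frac{m_0}{\omega_n s_0^2}$ does not vanish. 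So your exponent comparison is not uniform in $s$.
\end{itemize}

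The second gap, which you flag yourself, is the heart of the lemma and is left open. Hypothesis $(4.4)$ gives no pointwise lower bound for $w_0$ on $(0,s_0)$; for instance, $w_0\equiv0$ on $[0,\frac{s_0}{2}]$ is admissible. Then no profile with $\underline{w}(s,0)=a(0)s$ and $a(0)>\frac{\mu}{n}$ lies below $w_0$. The ``waiting time'' with a ``cruder subsolution'' keeping $w(s_1,t)\ge\frac{m_0}{2\omega_n}$ is asserted, not constructed.

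The paper avoids pointwise comparison altogether:
\begin{itemize}
\item With $s_1=2s_0$ it studies $y(t)=\int_0^{s_1}s^{-\gamma}(s_1-s)w(s,t)\,ds$. Its initial value satisfies $y(0)\ge c_3m_0s_1^{2-\gamma}$ using only $w_0\ge0$ and $w_0\ge\frac{m_0}{\omega_n}$ on $[s_0,s_1]$.
\item Integration by parts moves the degenerate diffusion onto the weight. The boundary terms at $s=\delta\searrow0$ vanish by $(4.5)$ and $w(0,t)=0$.
\item Bernoulli's inequality gives $w\,w_s^{\alpha}\ge(1-\alpha)w+\alpha w w_s$, hence a term $\frac{\alpha n^{\alpha}}{2}\int_0^{s_1}s^{-\gamma}w^2\,ds$.
\item Young's and the Cauchy--Schwarz inequalities then yield the Riccati-type inequality $(4.15)$. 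Its quadratic term dominates once $s_1$ is small, so $y$ would blow up in finite time, although $y(t)\le\frac{m}{\omega_n}\int_0^{s_1}s^{-\gamma}(s_1-s)\,ds$.
\end{itemize}
If you adopt this route, note one point for $\alpha>1$. The paper bounds $III$ using $C_4=\|w_s\|_{L^\infty}^{\alpha-1}$, which enters $c_2$ and hence the choice of $s_1$. To get $s_0$ depending only on $(m_0,m,R,\beta,\alpha)$, that step needs a solution-independent treatment.
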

	\begin{proof}
		Since $\beta>0$ and $n\ge 2$, it is possible to fix $\gamma\in (0,1)$ with the property that $\beta>2-n$, hence
		$$
			\gamma\le 1-\frac{2}{n}+\frac{\beta}{n}.
		$$
		We assume that $w\in C^{0}([0,R^{n}]\times [0, \infty))\cap C^{2,1}((0,R^{n}]\times (0, \infty))$ is a global classical solution of (2.14) satisfying (4.5).\\
		For $s_{1}\in (0,R^{n})$ and $\delta \in (0, \frac{s_{1}}{2})$, we then use (2.14) to
		compute		
		\begin{equation}
			\begin{array}{lll}
				\frac{d}{dt} \int_{\delta}^{s_{1}}s^{-\gamma}(s_{1}-s)w(s,t)ds & = \int_{\delta}^{s_{1}}s^{-\gamma}(s_{1}-s)w_{t}(s,t)ds\\
				\qquad & = n^{2}\int_{\delta}^{s_{1}}s^{2-\frac{2}{n}+\frac{\beta}{n}-\gamma}(s_{1}-s)w_{ss}(s,t)ds\\ 
				\qquad & \quad + n^{\alpha}\int_{\delta}^{s_{1}}s^{-\gamma}(s_{1}-s)w(s,t)w_{s}^{\alpha}(s,t)ds\\
				\qquad & \quad - n^{\alpha-1}\mu \int_{\delta}^{s_{1}}s^{1-\gamma}(s_{1}-s)w_{s}^{\alpha}(s,t)ds\\
				\qquad &=I+II-III.
			\end{array}
		\end{equation}
		The first term on the right-hand side of the equation, 
		\begin{equation}
			\begin{array}{ll}
				I= & - n^{2}\delta^{2-\frac{2}{n}+\frac{\beta}{n}-\gamma}(s_{1}-\delta)w_{s}(\delta,t)\\
				\quad & -2n^{2}(2-\frac{2}{n}+\frac{\beta}{n}-\gamma)\int_{\delta}^{s_{1}}s^{1-\frac{2}{n}+\frac{\beta}{n}-\gamma}w(s,t)ds\\
				\quad & +n^{2}(2-\frac{2}{n}+\frac{\beta}{n}-\gamma)(1-\frac{2}{n}+\frac{\beta}{n}-\gamma)\int_{\delta}^{s_{1}}s^{-\frac{2}{n}+\frac{\beta}{n}-\gamma}(s_{1}-s)w(s,t)ds\\
				\quad & +n^{2}(2-\frac{2}{n}+\frac{\beta}{n}-\gamma)\delta^{1-\frac{2}{n}+\frac{\beta}{n}-\gamma}(s_{1}-\delta)w(\delta,t)\\
				\quad & +n^{2}s_{1}^{2-\frac{2}{n}+\frac{\beta}{n}-\gamma}w(s,t)-n^{2}\delta^{2-\frac{2}{n}+\frac{\beta}{n}-\gamma}w(\delta,t), 
			\end{array}
		\end{equation}
		is obtained directly by divisional integration.\\
		By means of Bernoulli's inequality deflation and integration by parts, II is computed to satisfy
		\begin{equation}
			\begin{array}{ll}
				II= & n^{\alpha}\int_{\delta}^{s_{1}}s^{-\gamma}(s_{1}-s)w(s,t)\Big[1+(w_{s}(s,t)-1)\Big]^{\alpha}ds\\
				\quad & \ge (1-\alpha)n^{\alpha}\int_{\delta}^{s_{1}}s^{-\gamma}(s_{1}-s)w(s,t)ds-\frac{\alpha n^{\alpha}}{2}\delta^{-\gamma}(s_{1}-\delta)w^{2}(\delta,t)\\
				\quad & +\frac{\alpha n^{\alpha}r}{2}\int_{\delta}^{s_{1}}s^{-\gamma-1}(s_{1}-s)w^{2}(s,t)ds+\frac{\alpha n^{\alpha}}{2}\int_{\delta}^{s_{1}}s^{-\gamma}w^{2}(s,t)ds.
			\end{array}
		\end{equation}
		By Holder's inequality and partial integration one can compute
		\begin{equation}
			\begin{array}{ll}
				III\le & -n^{\alpha-1}\mu C_{4}\delta^{1-\gamma}(s_{1}-\delta)w(\delta,t)\\
				\quad & -(1-\gamma)n^{\alpha-1}\mu C_{4}\int_{\delta}^{s_{1}} s^{-\gamma}(s_{1}-s)w(s,t)ds\\
				\quad & +n^{\alpha-1}\mu C_{4}\int_{\delta}^{s_{1}}s^{1-\gamma}w(s,t)ds, 
			\end{array}
		\end{equation}
		where $C_{4}:=\Vert w_{s} \Vert^{\alpha-1}_{L^{\infty}((0,R^n)\times (0,T))}$. \\
		Now, As we assumed that $w(0, t) = 0$ for all $t \ge 0$ and $w_{s}\in L^{\infty}((0,R^{n})\times(0,t))$, this implicitly implies that $sup_{(s,\tau)\in (0,R^{n})\times(0,t)}\frac{w(s,\tau)}{s}$ is finite, and thus
		\begin{equation}
			\begin{array}{l}
				n^{2}(2-\frac{2}{n}+\frac{\beta}{n}-\gamma)\delta^{1-\frac{2}{n}+\frac{\beta}{n}-\gamma}(s_{1}-\delta)w(\delta,t)\to 0, \\
				n^{2}\delta^{2-\frac{2}{n}+\frac{\beta}{n}-\gamma}w(\delta,t)\to 0, \\
				n^{2}\delta^{2-\frac{2}{n}+\frac{\beta}{n}-\gamma}(s_{1}-\delta)w_{s}(\delta,t)\to 0, \\
				\frac{\alpha n^{\alpha}}{2}\delta^{-\gamma}(s_{1}-\delta)w^{2}(\delta,t)\to 0, \\
				n^{\alpha-1}\mu C_{4}\delta^{1-\gamma}(s_{1}-\delta)w(\delta,t) \to 0 \qquad as \ \delta \searrow 0.
			\end{array}
		\end{equation}
		By $(4.6)-(4.10)$, letting $\delta \searrow 0$ and neglecting certain positive terms, and integrating in time shows that
		\begin{equation}
			\begin{array}{ll}
				\int_{0}^{s_{1}}s^{-\gamma}(s_{1}-s)w(s,t)ds & \ge \int_{0}^{s_{1}}s^{-\gamma}(s_{1}-s)w_{0}(s)ds\\
				\qquad & \quad -2n^{2}(2-\frac{2}{n}+\frac{\beta}{n}-\gamma)\int_{0}^{t}\int_{0}^{s_{1}}s^{1-\frac{2}{n}+\frac{\beta}{n}-\gamma}w(s,\tau)dsd\tau \\
				\qquad & \quad +\frac{\alpha n^{\alpha}}{2}\int_{0}^{t}\int_{0}^{s_{1}}s^{-\gamma}w^{2}(s,\tau)dsd\tau\\
				\qquad & \quad -n^{\alpha-1}\mu C_{4}\int_{0}^{t}\int_{0}^{s_{1}}s^{1-\gamma}w(s,\tau)dsd\tau\\
				\qquad & \quad +(1-\alpha)n^{\alpha}\int_{0}^{t}\int_{0}^{s_{1}}s^{-\gamma}(s_{1}-s)w(s,\tau)dsd\tau
			\end{array}
		\end{equation}
		By young inequality, 
		\begin{equation}
			\begin{array}{ll}
				2n^{2}(2-\frac{2}{n}+\frac{\beta}{n}-\gamma)\int_{0}^{s_{1}}s^{1-\frac{2}{n}+\frac{\beta}{n}-\gamma}w(s,\tau)ds\\
				\le \frac{\alpha n^{\alpha}}{8}\int_{0}^{s_{1}}s^{-\gamma}w^{2}(s,\tau)ds+\frac{8}{\alpha}(2-\frac{2}{n}+\frac{\beta}{n}-\gamma)^{2}n^{4-\alpha}\int_{0}^{s_{1}}s^{2-\frac{4}{n}+\frac{2\beta}{n}-\gamma}ds\\
				=\frac{\alpha n^{\alpha}}{8}\int_{0}^{s_{1}}s^{-\gamma}w^{2}(s,\tau)ds+c_{1}s_{1}^{3-\frac{4}{n}+\frac{2\beta}{n}-\gamma}
			\end{array}
		\end{equation}
		for all $\tau>0$ with $c_{1}:=\frac{8(2-\frac{2}{n}+\frac{\beta}{n}-\gamma)^{2}n^{4-\alpha}}{(3-\frac{4}{n}+\frac{2\beta}{n}-\gamma)\alpha}$, \\
		and
		\begin{equation}
			\begin{array}{ll}
				n^{\alpha-1}\mu C_{4}\int_{0}^{s_{1}}s^{1-\gamma}w(s,\tau)ds\\
				\le \frac{\alpha n^{\alpha}}{8}\int_{0}^{s_{1}}s^{-\gamma}w^{2}(s,\tau)ds+\frac{2}{\alpha}\mu^{2}C_{4}^{2}n^{\alpha-2}\int_{0}^{s_{1}}s^{2-\gamma}ds\\
				=\frac{\alpha n^{\alpha}}{8}\int_{0}^{s_{1}}s^{-\gamma}w^{2}(s,\tau)ds+c_{2}\frac{m^{2}}{R^{2n}}s_{1}^{3-\gamma}
			\end{array}
		\end{equation}
		for all $\tau>0$ with
		$c_{2}:=\frac{2C_{4}^{2}n^{\alpha}}{\alpha (3-\gamma)\omega_{n}^{2}}$. \\
		Define $y(t):=\int_{0}^{s_{1}}s^{-\gamma}(s_{1}-s)w(s,t)ds$ with $t\ge 0$, we can obtain
		\begin{equation}
			\begin{array}{ll}
				y(t) & \le \Big(\int_{0}^{s_{1}}s^{-\gamma}w^{2}(s,t)ds\Big)^{\frac{1}{2}}\Big(\int_{0}^{s_{1}}s^{-\gamma}(s_{1}-s)^{2}ds\Big)^{\frac{1}{2}}\\
				\quad & \le \Big(\int_{0}^{s_{1}}s^{-\gamma}w^{2}(s,t)ds\Big)^{\frac{1}{2}}\Big(\int_{0}^{s_{1}}s^{-\gamma}s_{1}^{2}ds\Big)^{\frac{1}{2}}\\
				\quad & =\Big(\frac{1}{1-\gamma}s_{1}^{3-\gamma} \Big)^{\frac{1}{2}}\Big(\int_{0}^{s_{1}}s^{-\gamma}w^{2}(s,t)ds\Big)^{\frac{1}{2}}
			\end{array}
		\end{equation}
		for all $t>0$ by the H$\ddot{o}$lder's inequality. 
		Thus, $(4.11)-(4.14)$ leads to
		\begin{equation}
			\begin{array}{ll}
				y(t)\ge & y(0)+\frac{\alpha n
				^{\alpha}(1-\gamma)}{4}s_{1}^{\gamma-3}\int_{0}^{t}y^{2}(\tau)d\tau +(1-\alpha)n^{\alpha}\int_{0}^{t}y(\tau)d\tau\\
				\qquad & -\Big(c_{1}s_{1}^{3-\frac{4}{n}+\frac{2\beta}{n}-\gamma}+c_{2}\frac{m^{2}}{R^{2n}}s_{1}^{3-\gamma}\Big)t. 
			\end{array}
		\end{equation}
		Observe that $\beta >2-n$, which implies that $2-\frac{4}{n}+\frac{2\beta}{n}\ge 0$.\\
		Thus, let $R>0$, $m_{0}>0$ and $m>0$ we can fix $s_{0}=s_{0}(m_{0}, m, R, \alpha , \beta)\in (0, \frac{R^{n}}{2})$ such that $s_{1}:=2s_{0}$ fulfills
		\begin{equation}
			s_{1}^{2-\frac{4}{n}+\frac{2\beta}{n}}\le \frac{\alpha n^{\alpha}(1-\gamma)c_{3}^{2}}{12c_{1}}m_{0}^{2},
		\end{equation}
		\begin{equation}
			s_{1}^{2}\le \left\{    
				\begin{array}{lll}
					\min \Biggl\{\frac{\alpha n^{\alpha}(1-\gamma)c_{3}^{2}}{12c_{2}}\frac{m_{0}^{2}R^{2n}}{m^{2}}, \frac{\alpha^{2}(1-\gamma)^{2}c_{3}^{2}}{144(\alpha-1)^{2}}m_{0}^{2}\Biggr\}, & \alpha >1,\\
					\frac{\alpha n^{\alpha}(1-\gamma)c_{3}^{2}}{6c_{2}}\frac{m_{0}^{2}R^{2n}}{m^{2}}, & \alpha =1,
				\end{array}
			\right.
		\end{equation}
		with $c_{3}=\frac{1}{\omega_{n}}\Big[\frac{1}{1-\gamma}(1-\frac{1}{2^{1-\gamma}})-\frac{1}{2-\gamma}(1-\frac{1}{2^{2-\gamma}})\Big]$.
		Here since (4.4) along with our selections of $s_{0}$  ensures that
		\begin{equation}
			\begin{array}{ll}
				y(0) & \ge \frac{m_{0}}{\omega_{n}}\int_{\frac{s_{1}}{2}}^{s_{1}}s^{-\gamma}(s_{1}-s)ds\\
				\quad & =\frac{m_{0}}{\omega_{n}}\Big[\frac{1}{1-\gamma}s_{1}(s_{1}^{1-\gamma}-(\frac{s_{1}}{2})^{1-\gamma})-\frac{1}{2-\gamma}(s_{1}^{2-\gamma}-(\frac{s_{1}}{2})^{2-\gamma})\Big]\\
				\quad & =\frac{m_{0}}{\omega_{n}}\Big[\frac{1}{1-\gamma}(1-\frac{1}{2^{1-\gamma}})-\frac{1}{2-\gamma}(1-\frac{1}{2^{2-\gamma}})\Big]s_{1}^{2-\gamma}\\
				\quad & =c_{3}m_{0}s_{1}^{2-\gamma}.
			\end{array}
		\end{equation}
		Therefore, by (4.16) and (4.17),
		$$
			\begin{array}{l}
				\frac{c_{1}s_{1}^{3-\frac{4}{n}+\frac{2\beta}{n}-\gamma}+c_{2}\frac{m^{2}}{R^{2n}}s_{1}^{3-\gamma}+(\alpha-1)n^{\alpha}y(0)}{\frac{\alpha n^{\alpha} (1-\gamma)}{4}s_{1}^{\gamma-3}y^{2}(0)}\\
				\qquad\le \frac{4c_{1}}{\alpha n^{\alpha} (1-\gamma)c_{3}^{2}m_{0}^{2}}s_{1}^{2-\frac{4}{n}+\frac{2\beta}{n}}+
				\frac{4c_{2}m^{2}}{\alpha n^{\alpha} (1-\gamma)R^{2n}c_{3}^{2}m_{0}^{2}}s_{1}^{2}\\
				\qquad\quad +\frac{4(\alpha-1)n^{\alpha}}{\alpha n^{\alpha} (1-\gamma)c_{3}m_{0}}s_{1}\\
				\qquad\le 1, 
			\end{array}
		$$
		Obviously, there exists $T > 0$ such that the problem
		\begin{equation}
			\left\{    
			\begin{array}{l}
				\underline{y}'(t)=\frac{\alpha n^{\alpha} (1-\gamma)}{4}s_{1}^{\gamma-3}\underline{y}^{2}(t)-(\alpha-1)n^{\alpha}\underline{y}(t)\\
				\qquad\qquad\qquad\qquad\qquad\qquad-\Big(c_{1}s_{1}^{3-\frac{4}{n}+\frac{2\beta}{n}-\gamma}+c_{2}\frac{m^{2}}{R^{2n}}s_{1}^{3-\gamma}\Big),\quad t\in (0, T),\\ 
				\underline{y}(0)=y(0),
			\end{array}
			\right.
		\end{equation}
		has a solution $\underline{y}\in C^{1}([0,T])$ with $\underline{y}(t)\nearrow +\infty$ as $t\nearrow T$. Finally, a standard comparison argument based on $(4.15)$ ensures that $y(t)\ge \underline{y}(t)$ for all $t\in (0,T)$, which is incompatible with our assumption.
	\end{proof}
	\begin{proof}[Proof of the Theorem 4.1]
		Let $n\ge 2$, $R>0$, $\Omega =B_{R}(0)\subset \mathbb{R}^{n}$, $\beta>0$, $\alpha \ge 1$ and $u_{0}\in C^{\theta}([0,R])$ conform to (1.2).\\
		Assume that $\widetilde{u}_{0}\in C^{\theta}([0,R])$ is the value corresponding to $u_{0}$ after radial symmetry. Moreover, suppose that $m_{0}:=\int_{\Omega} u_{0}$, $m_{0}\in (0,m]$ and $r_{0}=s_{0}^{\frac{1}{n}}$ for $s_{0}=s_{0}(m_{0},m,R,\beta, \alpha)\in (0, R^{n})$ as in Lemma 4.1 such that
		\begin{equation}
			\int_{B_{r_{0}}(0)} u_{0}\ge m_{0}
		\end{equation} 
		holds. Furthermore, suppose that there exists a global classical solution to (1.1) satisfying (4.2) and (4.3) for all $T > 0$. The following assumes that $(\widetilde{u}, \widetilde{v})$ as $(u, v)$ in radial coordinates and has the property
		\begin{equation}
			\left\{    
			\begin{array}{lll}
				\widetilde{u}\in C^{0}((0, R]\times [0,\infty))\cap C^{2,1}((0, R]\times (0,\infty)),\\
				\widetilde{v}\in C^{2,0}((0, R]\times (0,\infty)),
			\end{array}
			\right.
		\end{equation}
		and solves (2.1) for the initial condition $\widetilde{u}(\cdot, 0)=\widetilde{u}_{0}$.\\
		Since $|\bigtriangledown v(x,t)|=|\widetilde{v}_{r}(|x|,t)| \ for \ all \ (x,t)\in \Omega_{0}\times (0,\infty)$,
		\begin{equation}
			0\le \widetilde{u}\in L^{\infty}((0,R)\times(0,T)) \ and \ \widetilde{v}_{r}\in L^{\infty}((0,R)\times(0,T)).
		\end{equation}
		As $n\ge 2$, Lemma 2.1 guarantee that
		$$
		   \widetilde{v}_{r}(r,t)=\frac{1}{r^{n-1}}\Big (\frac{\mu r^{n}}{n}-\int_{0}^{r}\rho^{n-1}\widetilde{u}(\rho, t)d\rho \Big)\quad for \ all \ (s,t)\in (0,R]\times [0,T).
		$$
		Hence, $w: [0,R^{n}]\times [0,\infty)\to R$ defined by
		$$
		     w(s,t):=\int_{0}^{s^{\frac{1}{n}}}\rho^{n-1}\widetilde{u}(\rho, t)d\rho \qquad s=r^{n}\in [0, R^{n}], \ t\in [0, \infty),
		$$
		is in $w\in C^{0}([0,R^{n}]\times [0, T))\cap C^{2,1}((0,R^{n}]\times (0, T))$ and solves (2.14) in $[0,R^{n}]\times [0, \infty)$ for $w_{0}\in C^{1}([0,R^{n}])$ given by
		$$
		    w_{0}(s)=\int_{0}^{s^{\frac{1}{n}}}\rho^{n-1}\widetilde{u}_{0}(\rho)d\rho \qquad for \ all \ s\in [0, R^{n}].
		$$
		Moreover,
		$$
			w_{s}(s,t)=\frac{1}{n}\widetilde{u}(s^{\frac{1}{n}},t) \qquad for \ all \ (s,t)\in (0,R^{n})\times(0,\infty)
		$$
		and (4.22) entails that for each $T > 0$
		$$
			w_{s}\in L^{\infty}((0,R^{n})\times (0,T)).
		$$
		Since however for $s_{0}\in (0, R^{n})$ as above by (4.1),
		$$
			\begin{array}{ll}
				w_{0}(s_{0})&=\int_{0}^{s_{0}^{\frac{1}{n}}}\rho^{n-1}\widetilde{u}_{0}(\rho)d\rho\\
				\quad & =\frac{1}{\omega_{n}}\int_{B_{\frac{1}{n}}(0)}u_{0}\\
				\quad & =\frac{1}{\omega}\int_{B_{r_{0}}(0)}u_{0}\\
				\quad & \ge \frac{m_{0}}{\omega_{n}},
			\end{array}
		$$
		this is inconsistent with Lemma 4.1, thus $(1.1)$ does not have an global classical solution satisfying $(4.2)$ and $(4.3)$ under the given initial value assumption.
	\end{proof}

	\newpage
	\section{Appendix: A comparison principle of (2.14) and (2.16)}	
	\indent Since it is also necessary to be able to deal with diffusion degeneracy in the course of our arguments, the standard principle of comparison do not apply. In addition, the comparison principle constructed in this paper is more complex relative to the lack of certain conditions mentioned in the literature \cite{winkler2019unstable, bellomo2017finite} and relative to the literature \cite{fluchter2024solutions}. Therefore, we prove a variant of the parabolic comparison principle that precisely covers the present situation.
	\begin{lamme}
		Let $T>0$ and $l,L \ge 0$ with $l<L$. Assume that the functions $\overline{w}$ and $\underline{w}$ belong to $C^{0}([l,L]\times [0,T])\cap C^{2,1}((l,L)\times (0,T])$. In addition, either \\ $$\underline{w}_s \in L^{\infty} ((l,L)\times (0,T)), \ or \ \overline{w}_s \in L^{\infty} ((l,L)\times (0,T))$$. \\
		Furthermore, for $a, b, \gamma, \delta \ge 0$, $\alpha>0$, and $\theta, c, d \in \mathbb{R}$, if
		\begin{equation}
			\underline{w}_{t} \le as^{\theta}\underline{w}_{ss}+bs^{\gamma}\underline{w}\underline{w}_{s}^{\alpha}+cs^{\delta}\underline{w}_{s}^{\alpha}+d\underline{w}_{s}^{\alpha}
		\end{equation}
		and 
		\begin{equation}
			\overline{w}_{t} \ge as^{\theta}\overline{w}_{ss}+bs^{\gamma}\overline{w}\overline{w}_{s}^{\alpha}+cs^{\delta}\overline{w}_{s}^{\alpha}+d\overline{w}_{s}^{\alpha}
		\end{equation}
		are hold for all $(s,t)\in (l,L)\times (0,T)$ with
		\begin{equation}
			\underline{w}(s,0) \le \overline{w}(s,0) \qquad for \ all \ s \in (l,L)		
		\end{equation}
		and
		\begin{equation}
			\underline{w}(l,t) \le \overline{w}(l,t), \ \underline{w}(L,t) \le \overline{w}(L,t) \qquad for \ all \ t \in (0,T).
		\end{equation}
		Then
		\begin{equation}
			\underline{w}(s,t) \le \overline{w}(s,t) \qquad for \ all \ s\in [l,L] \ and \ t\in [0,T).
		\end{equation}
	\end{lamme}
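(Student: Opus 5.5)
We argue by contradiction through a weighted difference. Throughout, we read $\underline{w}_s^{\alpha},\overline{w}_s^{\alpha}$ for nonnegative gradients (or as $|\cdot|^{\alpha-1}\cdot$), so that $\xi\mapsto\xi^{\alpha}$ is locally Lipschitz; this is the situation in all applications, where $w_s\ge 0$. Fix $T'\in(0,T)$ and $\lambda>0$, to be chosen, and set
\[
z(s,t):=e^{-\lambda t}\bigl(\underline{w}(s,t)-\overline{w}(s,t)\bigr),\qquad (s,t)\in[l,L]\times[0,T'].
\]
By continuity on the compact set $[l,L]\times[0,T']$, either $z\le 0$ everywhere, which gives the claim on $[0,T')$ since $T'$ is arbitrary, or $z$ attains a positive maximum at some point $(s_0,t_0)$. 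By the initial and boundary conditions, $t_0>0$ and $s_0\in(l,L)$. So $(s_0,t_0)$ is an interior point of the parabolic domain, where both functions are $C^{2,1}$, and there
\[
z_s=0,\qquad z_{ss}\le 0,\qquad z_t\ge 0 .
\]
Consequently $\underline{w}_s(s_0,t_0)=\overline{w}_s(s_0,t_0)=:p$ and $\underline{w}_{ss}\le\overline{w}_{ss}$ at that point. The degeneracy of $as^{\theta}$ at $s=l$ (for instance when $l=0$) does not matter, because the maximum is never attained at the boundary.

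Next, subtract the two differential inequalities at $(s_0,t_0)$. Because the gradients coincide there, the terms $cs^{\delta}\underline{w}_s^{\alpha}+d\underline{w}_s^{\alpha}$ and their counterparts for $\overline{w}$ cancel exactly. This is the point where the hypothesis $\underline{w}_s\in L^\infty$ or $\overline{w}_s\in L^\infty$ enters: it guarantees $|p|\le M:=\|\underline{w}_s\|_{L^\infty}$ (respectively $\|\overline{w}_s\|_{L^\infty}$), since $p$ equals the bounded one. We obtain
\[
e^{\lambda t_0}\bigl(z_t+\lambda z\bigr)=\underline{w}_t-\overline{w}_t\le as_0^{\theta}(\underline{w}_{ss}-\overline{w}_{ss})+bs_0^{\gamma}(\underline{w}-\overline{w})\,p^{\alpha}\le bL^{\gamma}M^{\alpha}e^{\lambda t_0}z .
\]
Here $a\ge 0$, and we use $s_0^{\theta}>0$ (because $s_0>l\ge 0$) and $s_0^{\gamma}\le \max(1,L)^{\gamma}$; absorb the latter factor into the constant. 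Choosing $\lambda>b\max(1,L)^{\gamma}M^{\alpha}$ gives $z_t+(\lambda-bL^\gamma M^\alpha)z\le 0$ with $z>0$, so $z_t<0$. This contradicts $z_t\ge 0$.

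The main obstacle is the nonlinear zero-order term $bs^\gamma w\,w_s^\alpha$. Without the bound on one of the gradients, the coefficient $p^{\alpha}$ multiplying $\underline{w}-\overline{w}$ could be unbounded along a sequence of maxima. The exponential weight only works once this coefficient is bounded, so the $L^\infty$ gradient assumption is exactly what closes the argument. A secondary subtlety is the case $t_0=T'$, where only the one-sided inequality $z_t\ge 0$ is available; this is harmless because the differential inequalities hold up to $t=T'<T$.
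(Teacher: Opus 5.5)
Your argument is correct and is essentially the paper's proof: a maximum-principle contradiction at an interior positive maximum, where the coinciding gradients cancel the $c$- and $d$-terms and the $L^\infty$ bound on one gradient controls the coefficient of $\underline{w}-\overline{w}$ in the $b$-term, which an exponential-in-time factor then dominates. The differences are cosmetic:

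\begin{itemize}
\item You use the multiplicative weight $e^{-\lambda t}$ with $\lambda$ strictly above $b\max(1,L)^{\gamma}M^{\alpha}$, where the paper subtracts $\varepsilon e^{2C_1 t}$ and takes a first touching time. Your choice keeps the contradiction strict even when $b=0$.
\item You read off $z_{ss}\le 0$ directly, instead of going through the paper's sequence $s_j\searrow s_0$.
\item Your remark about local Lipschitz continuity of $\xi\mapsto\xi^{\alpha}$ is never used. That is just as well, since it fails near $0$ for $\alpha<1$.
\end{itemize}
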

	\begin{proof}
		Assuming $\overline{w}_s \in L^{\infty} ((l,L)\times (0,T))$, we let 
		$$
			bs^{r}|\overline{w}_s^{\alpha}|\le C_{1}\qquad for \ all \ (s,t)\in (l,L)\times (0,T)
		$$
		where $C_{1}$ is a constant. For $\varepsilon >0$, we define 
		$$
			z(s,t):=\underline{w}(s,t)-\overline{w}(s,t)-\varepsilon e^{2C_{1}t}\qquad for \ all \ (s,t)\in[l,L]\times[0,T]
		$$
		and claim that 
		\begin{equation}
			z(s,t)<0 \qquad for \ all \ (s,t)\in (l,L)\times (0,T).
		\end{equation}
		\indent To verify this, suppose that equation $(5.6)$ is wrong, from equations $(5.3)$ and $(5.4)$ we would infer that there exists $(s_{0},t_{0})\in (l,L)\times (0,T)$ such that 
		\begin{equation}
			\max_{(s,t)\in (l,L)\times (0,t_{0}]} z(s,t)=z(s_{0},t_{0})=0.
		\end{equation}
		Moreover, $z_{t}(s_{0},t_{0})\ge 0$ and $z_{s}(s_{0},t_{0})= 0$ can be derived. Furthermore, since $\underline{w}$ and $\overline{w}$ belong to $C^{2,1}((l,L)\times(0,T])$, $z_{ss}(s,t_{0})$ exists for all $s \in (l,L)$ and
		\begin{equation}
			z_{s}(s,t_{0})=\int_{s_{0}}^{s}z_{ss}(\sigma , t_{0})d\sigma \qquad for \ all \ s\in (l,L).
		\end{equation}
		Since $z(\cdot, t_{0})$ gets its maximum at $s_{0}$, it follows from equation $(5.8)$ that when $j \to \infty$, there exists $(s_{j})_{j\in N} \subset (s_{0}, L)$ such that $s_{j} \searrow s_{0}$, and 
		$$
			z_{ss}(s_{j}, t_{0}) \le 0 \qquad for \ all \ j \in N.
		$$
		Otherwise $(5.8)$ implies $z_{s}(s,t_{0})>0 \ for \ all \ s\in (s_{0}, L)$, which contradicts equation $(5.7)$. \par
		Since $z_{ss}(s_{j},t_{0})\le 0$ and $z_{ss}=\underline{w}_{ss}-\overline{w}_{ss}$, $\underline{w}_{ss}(s_{j},t_{0})\le \overline{w}_{ss}(s_{j}, t_{0})$.
		$$
			\begin{array}{ll}
				z_{t}(s_{j}, t_{0}) & =\underline{w}_{t}(s_{j}, t_{0})-\overline{w}_{t}(s_{j}, t_{0})-2C_{1}\varepsilon e^{2C_{1}t_{0}}\\
				\quad & \le bs_{j}^{\gamma}\underline{w}(s_{j}, t_{0})\underline{w}_{s}^{\alpha}(s_{j}, t_{0})+cs_{j}^{\delta}\underline{w}_{s}^{\alpha}(s_{j}, t_{0})+d\underline{w}_{s}^{\alpha}(s_{j}, t_{0}) \\
				\quad & \quad -\big[ bs_{j}^{\gamma}\overline{w}(s_{j}, t_{0})\overline{w}_{s}^{\alpha}(s_{j}, t_{0})+cs_{j}^{\delta}\overline{w}_{s}^{\alpha}(s_{j}, t_{0})+d\overline{w}_{s}^{\alpha}(s_{j}, t_{0})  \big] \\
				\quad & \quad -2C_{1}\varepsilon e^{2C_{1}t_{0}}\\
				\quad & = bs_{j}^{\gamma}\overline{w}_{s}^{\alpha}(s_{j}, t_{0})(\underline{w}(s_{j},t_{0})-\overline{w}(s_{j},t_{0}))\\
				\quad & \quad + \big[bs_{j}^{\gamma} \underline{w}(s_{j}, t_{0})+cs_{j}^{\delta}+d \big](\underline{w}_{s}^{\alpha}(s_{j}, t_{0})-\overline{w}_{s}^{\alpha}(s_{j}, t_{0}))\\
				\quad & \quad -2C_{1}\varepsilon e^{2C_{1}t_{0}}. 
			\end{array}
		$$
		\indent Since $\underline{w}$ and $\overline{w}$ belong to $C^{0}([l,L]\times[0,T])$, $\underline{w}$ and $\overline{w}$ are bounded.
		Therefore, there exists a positive constant $C_{2}$ such that
		$$
			\begin{array}{ll}
				z_{t}(s_{j}, t_{0}) & \le bs_{j}^{\gamma}\overline{w}_{s}^{\alpha}(s_{j}, t_{0})(\underline{w}(s_{j},t_{0})-\overline{w}(s_{j},t_{0}))\\
				\quad & \quad + C_{2}(\underline{w}_{s}^{\alpha}(s_{j}, t_{0})-\overline{w}_{s}^{\alpha}(s_{j}, t_{0})) -2C_{1}\varepsilon e^{2C_{1}t_{0}}.
			\end{array} 
		$$
		Since $\underline{w}$ and $\overline{w}$ belong to $C^{0}([l,L]\times[0,T])\cap C^{2,1}((l,L)\times(0,T])$, we can let $j \to \infty$ here and have $$\underline{w}_{s}(s_{0}, t_{0})=\overline{w}_{s}(s_{0}, t_{0})$$ due to $z_{s}(s_{0}, t_{0})=0$ and $z_{s}=\underline{w}_{s}-\overline{w}_{s}$. Therefore, we can get
		$$
			\begin{array}{ll}
				z_{t}(s_{0}, t_{0}) & \le bs_{0}^{\gamma}\overline{w}_{s}^{\alpha}(s_{0}, t_{0})(\underline{w}(s_{0},t_{0})-\overline{w}(s_{0},t_{0}))-2C_{1}\varepsilon e^{2C_{1}t_{0}}\\
				\quad & \le C_{1}(\underline{w}(s_{0},t_{0})-\overline{w}(s_{0},t_{0}))-2C_{1}\varepsilon e^{2C_{1}t_{0}}. 
			\end{array} 
		$$
		From $(5.7)$,  $z(s_{0},t_{0})=\underline{w}(s_{0},t_{0})-\overline{w}(s_{0},t_{0})-\varepsilon e^{2C_{1}t_{0}}=0$, so 
		$$
			z_{t}(s_{0}, t_{0})\le 0,
		$$
		which contradicts $(5.6)$. Letting $\varepsilon \searrow 0$, by $(5.3)$, $(5.4)$ and $(5.6)$, we obtain $(5.5)$. 
	\end{proof}

	\newpage
	\bibliographystyle{unsrt}
	\bibliography{reference}	

@article{1953Patlak,
	title={Random walk with persistence and external bias},
	author={ Patlak, Clifford S. },
	journal={The Bulletin of Mathematical Biophysics},
	volume={15},
	number={3},
	pages={311-338},
	year={1953},
}

@article{KELLER1971225,
	title={Model for chemotaxis},
	author={Evelyn F. Keller and Lee A. Segel},
	journal={Journal of Theoretical Biology},
	volume={30},
	number={2},
	pages={225-234},
	year={1971},
}

@article{Wu2005SignalingMF,
	title={Signaling mechanisms for regulation of chemotaxis},
	author={Dianqing Wu},
	journal={Cell Research},
	year={2005},
	volume={15},
	number={1},
	pages={52-56},
}

@article {PMID:16857884,
	title = {Free-flight responses of Drosophila melanogaster to attractive odors},
	author = {Budick, Seth A and Dickinson, Michael H},
	number = {15},
	volume = {209},
	year = {2006},
	journal = {The Journal of experimental biology},
	pages = {3001-3017},
}

@article{Kennedy1974PheromoneRegulatedAI,
	title={Pheromone-Regulated Anemotaxis in Flying Moths},
	author={J. S. Kennedy and David Marsh},
	journal={Science},
	year={1974},
	volume={184},
	number={4140}, 
	pages={1001-999},
}

@article{2001Finite,
	title={Finite dimensional attractor for one-dimensional Keller-Segel equations},
	author={ Osaki, Koichi  and  Yagi, Atsushi },
	journal={Funkcialaj Ekvacioj},
	volume={44},
	number={3},
	pages={441-469},
	year={2001},
}

@article{1997Application,
	title={Application of the Trudinger-Moser inequality to a parabolic system of chemotaxis},
	author={ Nagai, T.  and  Senba, T.  and  Yoshida, K. },
	journal={Funkc Ekvacioj},
	volume={40},
	number={3},
	pages={411-433},
	year={1997},
}

@article{Winkler2010AggregationVG,
	title={Aggregation vs. global diffusive behavior in the higher-dimensional Keller–Segel model},
	author={Michael Winkler},
	journal={Journal of Differential Equations},
	year={2010},
	number={12},
	volume={248},
	pages={2889-2905},
}

@article{Bellomo2015TowardAM,
	title={Toward a mathematical theory of Keller–Segel models of pattern formation in biological tissues},
	author={Nicola Bellomo and Nicola Bellomo and Abdelghani Bellouquid and Youshan Tao and Michael Winkler},
	journal={Mathematical Models and Methods in Applied Sciences},
	year={2015},
	number={9},
	volume={25},
	pages={1663-1763},
}

@article{Hillen2009AUG,
	title={A user’s guide to PDE models for chemotaxis},
	author={Thomas Hillen and Kevin J. Painter},
	journal={Journal of Mathematical Biology},
	year={2009},
	number={1-2},
	volume={58},
	pages={183-217},
}

@article{2003From,
	title={From 1970 until present: The Keller-Segel model in chemotaxis and its consequences I},
	author={ Horstmann, D. },
	journal={Jahresbericht der Deutschen Mathematiker-Vereinigung},
	volume={105},
	number={3},
	pages={103-165},
	year={2003},
}

@article{nagai2001blowup,
	title={Blowup of nonradial solutions to parabolic--elliptic systems modeling chemotaxis in two-dimensional domains},
	author={Nagai, Toshitaka},
	journal={Journal of Inequalities and Applications},
	volume={2001},
	number={1},
	pages={37-55},
	year={2001},
}

@article{horstmann2001blow,
	title={Blow-up in a chemotaxis model without symmetry assumptions},
	author={Horstmann, Dirk and Wang, Guofang},
	journal={European Journal of Applied Mathematics},
	volume={12},
	number={2},
	pages={159-177},
	year={2001},
}

@article{jager1992explosions,
	title={On explosions of solutions to a system of partial differential equations modelling chemotaxis},
	author={J{\"a}ger, Willi and Luckhaus, Stephan},
	journal={Transactions of the american mathematical society},
	volume={329},
	number={2},
	pages={819-824},
	year={1992},
}

@article{winkler2019unstable,
	title={How unstable is spatial homogeneity in Keller-Segel systems? A new critical mass phenomenon in two-and higher-dimensional parabolic-elliptic cases},
	author={Winkler, Michael},
	journal={Mathematische Annalen},
	volume={373},
	number={10},
	pages={1237-1282},
	year={2019},
}

@article{winkler2010boundedness,
	title={Boundedness and finite-time collapse in a chemotaxis system with volume-filling effect},
	author={Winkler, Michael and Djie, Kianhwa C},
	journal={Nonlinear Analysis: Theory, Methods \& Applications},
	volume={72},
	number={2},
	pages={1044-1064},
	year={2010},
}

@article{winkler2024slow,
	title={Slow Grow-up in a Quasilinear Keller--Segel System},
	author={Winkler, Michael},
	journal={Journal of Dynamics and Differential Equations},
	volume={36},
	number={2},
	pages={1677-1702},
	year={2024},
}

@article{winkler2024complete,
	title={Complete infinite-time mass aggregation in a quasilinear Keller--Segel system},
	author={Winkler, Michael},
	journal={Israel Journal of Mathematics},
	volume={263},
	number={1},
	pages={93-127},
	year={2024},
}

@article{tu2022effects,
	title={On effects of the nonlinear signal production to the boundedness and finite-time blow-up in a flux-limited chemotaxis model},
	author={Tu, Xinyu and Mu, Chunlai and Zheng, Pan},
	journal={Mathematical Models and Methods in Applied Sciences},
	volume={32},
	number={4},
	pages={647-711},
	year={2022},
}

@article{chiyoda2020finite,
	title={Finite-time blow-up in a quasilinear degenerate chemotaxis system with flux limitation},
	author={Chiyoda, Yuka and Mizukami, Masaaki and Yokota, Tomomi},
	journal={Acta Applicandae Mathematicae},
	volume={167},
	number={1},
	pages={231-259},
	year={2020},
}

@article{fluchter2024solutions,
	title={Solutions to a chemotaxis system with spatially heterogeneous diffusion sensitivity},
	author={Fl{\"u}chter, Gregor},
	journal = {Journal of Differential Equations},
	volume = {424},
	pages = {1-29},
	year = {2025},
}

@article{winkler2011blow,
	title={Blow-up in a higher-dimensional chemotaxis system despite logistic growth restriction},
	author={Winkler, Michael},
	journal={Journal of Mathematical Analysis and Applications},
	volume={384},
	number={2},
	pages={261-272},
	year={2011},
}

@article{cieslak2008finite,
	title={Finite-time blow-up in a quasilinear system of chemotaxis},
	author={Cie{\'s}lak, Tomasz and Winkler, Michael},
	journal={Nonlinearity},
	volume={21},
	number={5},
	pages={1057-1076},
	year={2008},
}

@article{winkler2010does,
	title={Does a'volume-filling effect'always prevent chemotactic collapse?},
	author={Winkler, Michael},
	journal={Mathematical Methods in the Applied Sciences},
	volume={33},
	number={1},
	pages={12-24},
	year={2010},
}

@article{horstmann2005boundedness,
	title={Boundedness vs. blow-up in a chemotaxis system},
	author={Horstmann, Dirk and Winkler, Michael},
	journal={Journal of Differential Equations},
	volume={215},
	number={1},
	pages={52-107},
	year={2005},
}

@article{herrero1997blow,
	title={A blow-up mechanism for a chemotaxis model},
	author={Herrero, Miguel A and Vel{\'a}zquez, Juan JL},
	journal={Annali della Scuola Normale Superiore di Pisa-Classe di Scienze},
	volume={24},
	number={4},
	pages={633-683},
	year={1997},
}

@article{cieslak2010finite,
	title={Finite time blow-up for a one-dimensional quasilinear parabolic--parabolic chemotaxis system},
	author={Cie{\'s}lak, Tomasz and Lauren{\c{c}}ot, Philippe},
	journal={Annales de l'Institut Henri Poincar{\'e} C, Analyse non lin{\'e}aire},
	volume={27},
	number={1},
	pages={437-446},
	year={2010},
}

@article{winkler2022approaching,
	title={Approaching logarithmic singularities in quasilinear chemotaxis-consumption systems with signal-dependent sensitivities.},
	author={Winkler, Michael},
	journal={Discrete \& Continuous Dynamical Systems-Series B},
	volume={27},
	number={11},
	pages={6565-6587},
	year={2022},
}

@article{winkler2024bounds,
	title={L$^{\infty}$ bounds in a two-dimensional doubly degenerate nutrient taxis system with general cross-diffusive flux},
	author={Winkler, Michael},
	journal={Journal of Differential Equations},
	volume={400},
	number={10},
	pages={423-456},
	year={2024},
}

@article{bellomo2017finite,
	title={Finite-time blow-up in a degenerate chemotaxis system with flux limitation},
	author={Bellomo, Nicola and Winkler, Michael},
	journal={Transactions of the American mathematical society, Series B},
	volume={4},
	number={2},
	pages={31-67},
	year={2017},
}

@article{article2000,
	author = {Larrivée, B and Karsan, A},
	year = {2000},
	month = {06},
	pages = {447-56},
	title = {Signaling pathways induced by vascular endothelial growth factor},
	volume = {5},
	journal = {International journal of molecular medicine},
}

@article{LIN2018435,
	title = {A blow-up result for a quasilinear chemotaxis system with logistic source in higher dimensions},
	journal = {Journal of Mathematical Analysis and Applications},
	volume = {464},
	number = {1},
	pages = {435-455},
	year = {2018},
	issn = {0022-247X},
	author = {Ke Lin and Chunlai Mu and Hua Zhong},
}
\end{document}